\def\Xint#1{\mathchoice
{\XXint\displaystyle\textstyle{#1}}%
{\XXint\textstyle\scriptstyle{#1}}%
{\XXint\scriptstyle\scriptscriptstyle{#1}}%
{\XXint\scriptscriptstyle\scriptscriptstyle{#1}}%
\!\int}
\def\XXint#1#2#3{{\setbox0=\hbox{$#1{#2#3}{\int}$ }
\vcenter{\hbox{$#2#3$ }}\kern-.6\wd0}}
\def\dashint{\Xint-}
\newtheorem{theorem}{Theorem}[section]
\newtheorem{lemma}[theorem]{Lemma}
\theoremstyle{definition}
\newtheorem{definition}[theorem]{Definition}
\newcounter{smalllist}
\numberwithin{equation}{section}
\newcommand{\lb}{\label}
\newcommand{\supp}{\text{\rm{supp}}}
\newcommand{\beq}{\begin{equation}}
\newcommand{\eeq}{\end{equation}}
\newcommand{\bal}{\begin{align}}
\newcommand{\eal}{\end{align}}
\newcommand{\bals}{\begin{align*}}
\newcommand{\eals}{\end{align*}}
\newcommand{\bbN}{{\mathbb{N}}}
\newcommand{\bbR}{{\mathbb{R}}}
\newcommand{\bbS}{{\mathbb{S}}}
\newcommand{\calH}{{\mathcal H}}
\newcommand{\eps}{\varepsilon}
\begin{document}
\title[Propagation of Reactions with Fractional Diffusion]
{Optimal Estimates on the Propagation of Reactions with Fractional Diffusion}

\author{Yuming Paul Zhang and Andrej Zlato\v s}

\address{\noindent Department of Mathematics \& Statistics \\ Auburn University\\ Auburn, AL 36849 \newline Email: \tt
yzhangpaul@auburn.edu}

\address{\noindent Department of Mathematics \\ University of California San Diego \\ La Jolla, CA 92093 \newline Email: \tt
zlatos@ucsd.edu}


\begin{abstract} 
We study the reaction-fractional-diffusion equation $u_t+(-\Delta)^{s} u=f(u)$ with ignition and monostable reactions $f$, and $s\in(0,1)$. We obtain the first optimal bounds on the propagation of front-like solutions in the cases where no traveling fronts exist. Our results cover most of these cases, and also apply to propagation from localized initial data.
\end{abstract}

\maketitle

\section{Introduction} \lb{S1}

In this paper we consider the Cauchy problem for the reaction-fractional-diffusion equation
\beq\lb{1.0}
u_t+(-\Delta)^{s} u=f(u),
\eeq
with $(t,x)\in [0,\infty)\times \bbR^d$ and $f$ a Lipschitz reaction function. 
 One frequently assumes that $f(0)=f(1)=0$ and considers solutions $0\le u\le 1$ that model transitions between two equilibrium states (i.e., $u\equiv 0$ and $u\equiv 1$), driven by the interplay of the two physical processes involved: reaction and diffusion. Our goal is to obtain optimal estimates on the speed of invasion of one equilibrium ($u\equiv 0$) by the other ($u\equiv 1$), so we will study the speeds of propagation of solutions with 
 front-like   (see \eqref{1.11a} below) and localized  initial data. 
Note that the comparison principle shows that in the case of front-like initial data, it suffices to consider \eqref{1.0} in one spatial dimension $d=1$, that is,
\beq\lb{1.1}
u_t+(-\partial_{xx})^{s} u=f(u)
\eeq
 on $[0,\infty)\times \bbR$.  We will do so here when we discuss such initial data, while for localized data we will consider \eqref{1.0} with $d\ge 1$.  The distinction between these two cases is marginal when $s=1$, but this is not anymore the case when $s\in(0,1)$ and diffusion has long range  kernels.

The classical diffusion case $s=1$ goes back to pioneering works by  Kolmogorov, Petrovskii, and Piskunov \cite{KPP}, and Fisher \cite{Fisher}, and it is now well-known that
solutions with both types of initial data propagate ballistically 
for all reaction functions of interest --- monostable, ignition, as well as (unbalanced) bistable 
\cite{aronson1978}. 

We will therefore concentrate here on the fractional diffusion case $s\in (0,1)$, with
 the fractional Laplacian given by
\beq\lb{1.2x}
{
(-\Delta)^s u(x) = c_{s,d} \,\,{\rm p.v.} \int_{\bbR^d} \frac{u(x)-u(y)}{|x-y|^{d+2s}}dy,
}
\eeq
where $c_{s,d}:=c_s \left(\int_{\bbR^{d-1}} (1+h^2)^{-\frac d2 -s} dh \right)^{-1}$ and $c_s=c_{s,1}>0$ is an appropriate constant.
 Then \eqref{1.0} models reactive processes subject to non-local diffusion, mediated by L\' evy stochastic processes with jumps (see, e.g., \cite{34in CGZ} and references therein), and the question of propagation of solutions turns out to be much more complicated.  Its study was initiated by Cabr\' e and Roquejoffre in \cite{cabre2013influence}, who considered \eqref{1.0} with Fisher-KPP reactions (specifically, concave ones with $f'(0)>f(0)=0=f(1)>f'(1)$), which are a special case of 1-monostable reactions from Definition \ref{D.1.1} below.  They proved that solutions to \eqref{1.1} with front-like initial data propagate exponentially, in the sense that $u(t,\cdot)\to 1$ uniformly on $\{x\le e^{\sigma t}\}$ for each $\sigma<\frac{f'(0)}{2s}$, while $u(t,\cdot)\to 0$ uniformly on $\{x\ge e^{\sigma t}\}$ for each $\sigma>\frac{f'(0)}{2s}$. 
 They also considered localized (non-zero non-negative fast-decaying) initial data for \eqref{1.0} and showed that in that case one has $u(t,\cdot)\to 1$ uniformly on $\{|x|\le e^{\sigma t}\}$ for each $\sigma<\frac{f'(0)}{d+2s}$, while $u(t,\cdot)\to 0$ uniformly on $\{|x|\ge e^{\sigma t}\}$ for each $\sigma>\frac{f'(0)}{d+2s}$.
 We  note that prior to \cite{cabre2013influence}, exponential propagation for Fisher-KPP reactions and continuous diffusion kernels with algebraically decreasing tails (from compactly supported initial data in one dimension) was established by Garnier \cite{Gar}.  While there are many other papers studying such questions for various diffusion operators (see, e.g., \cite{BCL2,alfaro2017propagation} and references therein), we will restrict our presentation here to  \eqref{1.0}.

The exponential propagation rates for Fisher-KPP reactions and $s<1$ are due to interaction between the long range kernels of the fractional diffusion and a strong hair-trigger effect of the reaction.  They contrast with the case $s=1$, when level sets of solutions are located in an $o(t)$ neighborhood of the point $x=ct$ (for front-like data) resp.~the sphere $\partial B_{ct}(0)$ (for localized data), with the spreading speed $c$ depending only on $f$ (for all the above types of reactions \cite{aronson1978}).
It turns out that they are in fact a special feature of 1-monostable reactions, and the situation is very different for all the other reaction types.
Let us now define these.


\begin{definition} \lb{D.1.1}
Let $f:[0,1] \to \bbR$ be a Lipschitz continuous function with $f(0)=f(1)=0$. 

\begin{itemize}
\item[(i)] If there is  $\theta_0\in (0,1)$ such that $f(u)=0$ for all $u\in (0,\theta_0]$ and $f(u)>0$ for all $u\in (\theta_0,1)$, then $f$ is an {\it ignition  reaction} and $\theta_0$ is the {\it ignition temperature}.

\item[(ii)] If $f(u)>0$ for all $u\in (0,1)$, then $f$ is a {\it monostable reaction}.  If also
\beq \lb{1.111}
\gamma u^\alpha\le f(u) \le \gamma' u^\alpha \qquad \text{ for all }u\in (0,\theta_0],
\eeq
where $\alpha\ge 1$, $\theta_0\in (0,1)$, and $\gamma,\gamma'>0$,
we say that $f$ is an {\it $\alpha$-monostable reaction}.

\item[(iii)] If there is  $\theta_0\in (0,1)$ such that $f(u)<0$ for all $u\in (0,\theta_0)$ and $f(u)>0$ for all $u\in (\theta_0,1)$, as well as $\int_0^1 f(u) du>0$, then $f$ is an (unbalanced) {\it bistable reaction}.
\end{itemize}
\end{definition}

Ignition reactions are used to model combustive processes, while monostable reactions model phenomena such as chemical kinetics and population dynamics \cite{49xin1992,Fife,gavalas,Berrev}.  Bistable reactions are used in models of phase transitions and nerve pulse propagation  \cite{AC,AR,NAY}, and the unbalanced condition $\int_0^1 f(u) du>0$ guarantees at least ballistic propagation 
for all non-negative solutions that are initially larger than $\theta$ (for any $\theta>\theta_0$) on some large-enough ball (of $\theta$-dependent radius).  This is also the case for  all ignition and monostable reactions (with $\theta>0$ in the latter case).

In particular,  the following holds for all the reactions from Definition \ref{D.1.1}.  If $0\le u\le 1$ is a solution to \eqref{1.1} with either $\liminf_{x\to -\infty} u(0,x)>\theta_0$ for ignition and bistable $f$, or $\liminf_{x\to -\infty} u(0,x)>0$ for monostable $f$, then for any $\lambda\in (0,1)$ we have 
\beq\lb{1.66}
\liminf_{t\to\infty} \frac {{\underline x}_\lambda(t;u)} t>0,
\eeq
 where for  $t\geq 0$ we let
\beq\lb{1.55a}
{\underline x}_\lambda(t;u):=\inf \left\{ x\in\bbR\,\big|\, u(t,x)\leq \lambda \right\}
\eeq
 be the left end of the $\lambda$-level set of $u(t,\cdot)$.  If we instead let
 \beq\lb{1.55a'}
{\underline x}_\lambda(t;u):=\inf \left\{ |x| \,\big|\, u(t,x)\leq \lambda \right\},
\eeq
this claim also extends to solutions to \eqref{1.0} in any dimension and with  $\inf_{|x|\le R_\theta} u(0,x)\ge\theta$, where  $\theta$ must satisfy either $\theta>\theta_0$ for ignition and bistable $f$, or $\theta>0$ for monostable $f$ (and $R_\theta$ also depends on $s,f,d$).  
Both these claims easily follow from the proof of the last claim in Lemma~\ref{L.2.4} below.  (We note that when $f$ is sufficiently small near $u=0$, solutions with small enough initial data may be {\it quenched} in the sense that $\lim_{t\to\infty} \|u(t,\cdot)\|_\infty=0$.)

Ballistic propagation for  $u$  is therefore equivalent to
\[
\limsup_{t\to\infty} \frac {{\overline x}_\lambda(t;u)} t <\infty
\]
for all $\lambda\in (0,1)$, where for front-like data and \eqref{1.1} we let
\beq\lb{1.55b}
{\overline x}_\lambda(t;u):=\sup \left\{ x\in\bbR\,\big|\, u(t,x)\geq \lambda \right\} \qquad(\ge {\underline x}_\lambda(t;u))
\eeq
be the right end of the $\lambda$-level set of $u(t,\cdot)$, and for localized data and \eqref{1.0} we let
 \beq\lb{1.55b'}
{\overline x}_\lambda(t;u):=\sup \left\{ |x| \,\big|\, u(t,x)\geq \lambda \right\} \qquad(\ge {\underline x}_\lambda(t;u)).
\eeq
Comparison principle shows that this holds whenever we have either $u(0,\cdot)\le \theta \chi_{(-\infty,R)}$ or  $u(0,\cdot)\le \theta \chi_{B_R(0)}$ for some $\theta<1$ and $R\in\bbR$, provided \eqref{1.1} has a {\it traveling front}.  The latter is a solution of the form $\tilde u(t,x)=U(x-ct)$, with $\lim_{x\to-\infty} U(x)=1$ and $\lim_{x\to\infty} U(x)=0$ (i.e., $U$ must satisfy $-cU_x+(-\partial_{xx})^{s} U=f(U)$).  In fact, it suffices to have $u(0,\cdot)\le \chi_{(-\infty,R)}$ or  $u(0,\cdot)\le  \chi_{B_R(0)}$ as long as a traveling front exists for \eqref{1.1} with some $\tilde f\ge f$ in place of $f$, such that $\tilde f(1+\delta)=0$ and $\tilde f>0$ on $[1,1+\delta)$ for some $\delta>0$ (then of course $\lim_{x\to-\infty} U(x)=1+\delta$, and one only needs $u(0,\cdot)$ to be dominated by some shift of $U$).  Hence in the rest of this discussion we will assume that
\beq\lb{1.11a}
\theta \chi_{(-\infty,0)}\le  u(0,\cdot) \leq \chi_{(-\infty,R)}
\eeq
for front-like data and
\beq\lb{1.11a'}
\theta \chi_{B_{R'}(0)} \le  u(0,\cdot) \leq \chi_{B_R(0)}
\eeq
for localized data, with some $\theta,R,R'>0$ (and $\theta>\theta_0$ when $f$ is  ignition or bistable).

Proving existence of traveling fronts for \eqref{1.1}, and hence ballistic propagation of solutions, requires one to solve only a (non-local) ODE, and this was indeed achieved in a number of cases.  These include all the above reactions with $s=1$ \cite{aronson1978,AR}, where diffusion is local,  as well as all $C^2$ bistable reactions with any $s\in(0,1)$ \cite{achleitner2015traveling,gui2015,chmaj2013existence}, where  the negative values of $f$ near $u=0$ suppress the effects of long range dispersal.  The cases of ignition and monostable reactions with $s\in(0,1)$ are more delicate, and depend intimately on the interplay of the long range diffusion and the strength of $f$ near $u=0$.  Nevertheless, Mellet, Roquejoffre, and Sire   proved that traveling fronts still exist for ignition reactions with $f'(1)<0$ when $s>\frac{1}{2}$ \cite{mellet2011existence}, while Gui and Huan later showed that they do not exist when $s\le \frac{1}{2}$, as well as that they exist for $\alpha$-monostable reactions (and $s\in(0,1)$) precisely when $s\ge \frac{\alpha}{2(\alpha-1)}$ \cite{gui2015traveling} ($f$ was assumed to satisfy additional hypotheses in \cite{gui2015traveling} when $s>\frac 12$).  We note that since the comparison principle can be used to estimate propagation of solutions for monostable reactions that lie between multiples of  two distinct powers of $u$ near $u=0$, it makes sense to concentrate only on $\alpha$-monostable reactions among the monostable ones; we will do so here.

This leaves one with an expectation of super-ballistic (i.e., accelerating) propagation in the cases of ignition reactions with $s\le \frac 12$ and $\alpha$-monostable reactions with $s< \min\{\frac{\alpha}{2(\alpha-1)},1\}$. For  front-like initial data and \eqref{1.1}, this has indeed been confirmed in all these cases except for ignition reactions with $s=\frac 1{2}$.   
The result for concave Fisher-KPP reactions in \cite{cabre2013influence}  immediately yields exponential propagation for all 1-monostable reactions and $s\in(0,1)$, albeit with the lower and upper exponential rates being $\frac{\gamma}{2s}$ and $\frac{\gamma'}{2s}$, respectively.
%
%
More recently, Coville, Gui, znd Zhao \cite{coville2020propagation}  proved for $\alpha$-monostable reactions with $\alpha>1$ and $s< \min\{\frac{\alpha}{2(\alpha-1)},1\}$
that
 \[
0< \liminf_{t\to\infty}  t^{-\frac{\max\{\alpha-1,1\}}{2s(\alpha-1)}} \, {\overline x}_\lambda(t;u) 
\qquad\text{and}\qquad  
\limsup_{t\to\infty}  t^{-\frac{\alpha}{2s(\alpha-1)}} \, {\overline x}_\lambda(t;u) <\infty
\]
 for all $\lambda\in(0,1)$   (assuming in addition that $f$ is $C^1$ and $f'(1)<0$), which then also yields for ignition reactions with $s\in(0,\frac 12]$ that
 \beq\lb{1.22a}
\limsup_{t\to\infty}  t^{-\frac{1}{2s}-\eps} \, {\overline x}_\lambda(t;u) <\infty
\eeq
 for all $\eps>0$ and $\lambda\in(0,1)$.  We note that while  \cite[Theorem 1.3]{coville2020propagation} may appear to also imply  $\liminf_{t\to\infty}  t^{-\frac{1}{2s}} \, {\overline x}_\lambda(t;u) >0$ for ignition reactions and all $\lambda\in(0,1)$,  Proposition~3.1 in its proof in fact assumes $f$ to be monostable.
Nevertheless, we still have \eqref{1.66} in this case.
 
While these results cover all the cases of interest in which traveling fronts do not exist, in {\it all of them} there is a gap between the powers  of time resp.~exponential rates in the best available lower and upper bounds on the dynamic: an infinitesimal one for concave Fisher-KPP reactions and for ignition reactions with $s=\frac 1{2}$, and a positive one in all the other cases.  
%
 %
In the following first main result of the present paper, we fully close this gap in 
all the latter cases,
 proving that ${\underline x}_\lambda(t;u)$ and ${\overline x}_\lambda(t;u)$ both have the exact power behavior $O(t^{\frac{\alpha}{2s(\alpha-1)}})$ in time for all $\alpha$-monostable reactions with $\alpha>1$, and $O(t^{\frac{1}{2s}})$ for all ignition reactions (so we improve both the lower and upper bounds in the ignition case).   We do so for all the values of $s$ for which traveling fronts do not exist,
except for ignition reactions with $s=\frac 12$, where almost-ballistic propagation (which follows from 
 \eqref{1.66} and \eqref{1.22a}), remains the best result.

\begin{theorem} \lb{T.1.1}
Let $0\le u\le 1$ be a solution to \eqref{1.1} such that \eqref{1.11a} holds for some $\theta,R>0$, and let ${\underline x}_\lambda(t;u)$ and ${\overline x}_\lambda(t;u)$ be from \eqref{1.55a} and \eqref{1.55b}.

\begin{itemize}
\item[(i)] If $f$ is an ignition reaction with ignition temperature $\theta_0\in(0,\theta)$ and $s\in(0,\frac 12)$, then for each $\lambda\in (0,1)$ 
we have
 \[
0< \liminf_{t\to\infty}  t^{-\frac{1}{2s}} \, {\underline x}_\lambda(t;u) \le
\limsup_{t\to\infty}  t^{-\frac{1}{2s}} \, {\overline x}_\lambda(t;u) <\infty
\]
\item[(ii)] If $f$ is an $\alpha$-monostable reaction for some $\alpha>1$ and $s\in(0,\min\{\frac{\alpha}{2(\alpha-1)},1\})$, then for each $\lambda\in (0,1)$ 
we have
\[
0< \liminf_{t\to\infty}  t^{-\frac{\alpha}{2s(\alpha-1)}} \, {\underline x}_\lambda(t;u) \le
\limsup_{t\to\infty}  t^{-\frac{\alpha}{2s(\alpha-1)}} \, {\overline x}_\lambda(t;u) <\infty
\]
\end{itemize}
\end{theorem}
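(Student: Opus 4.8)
The plan is to obtain matching upper and lower bounds on the level-set positions by constructing explicit sub- and super-solutions of self-similar type $x \sim t^{1/(2s)}$ (ignition) resp.\ $x \sim t^{\alpha/(2s(\alpha-1))}$ ($\alpha$-monostable), exploiting the heavy tail $|x|^{-1-2s}$ of the one-dimensional fractional Laplacian kernel. For the \emph{upper bound} (propagation cannot be faster than the claimed rate) I would compare $u$ from above with a function of the form $v(t,x)=\min\{1,\, A(t)(x_+ + 1)^{-2s}\}$ or a smoothed variant: away from the ``front'' region, $v$ behaves like a small multiple of $x^{-2s}$, the stationary tail of $(-\partial_{xx})^s$, and one checks $v_t + (-\partial_{xx})^s v \ge f(v)$ once $v$ is small enough that $f(v)\le \gamma' v^\alpha$ (monostable) or $f(v)=0$ (ignition, where $v<\theta_0$ in the tail). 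Tracking the ODE for $A(t)$ forced by the contribution of the region where $v\equiv 1$ gives $A(t)\sim t$ in the ignition case (hence the $\lambda$-level set sits at $x\sim A(t)^{1/(2s)}=t^{1/(2s)}$) and $A(t)\sim t^{\alpha/((\alpha-1))}$ up to constants in the monostable case; the point is that the nonlinear absorption $-f(v)\approx -v^\alpha$ exactly balances the kernel-driven growth and pins down the exponent.

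For the \emph{lower bound} (propagation is at least this fast) I would build a subsolution supported on a growing interval. The mechanism is: once $u\ge\theta$ on an interval of length $\gtrsim 1$ (which holds for all $t$ by \eqref{1.66}, or rather by the last claim of Lemma~\ref{L.2.4}), the long-range kernel instantaneously raises $u$ at a far point $x$ to height $\gtrsim |x|^{-1-2s}$ times the length of that interval; then the reaction term amplifies this seed. The cleanest route is a bootstrap on the quantity $R(t):={\underline x}_\lambda(t;u)$ (or ${\overline x}_\lambda$): using the Duhamel formula $u(t)=e^{-t(-\partial_{xx})^s}u(0)+\int_0^t e^{-(t-r)(-\partial_{xx})^s}f(u(r))\,dr$ together with the lower heat-kernel bound $p_s(t,x)\gtrsim t\,(|x|+t^{1/(2s)})^{-1-2s}$, one shows that if $u(r,\cdot)\ge\lambda$ on $(-\infty, R(r))$ with $R(r)\ge c r^{\beta}$ for $r\le t$, then $u(t,x)$ stays above a threshold for $x$ up to $c' t^{\beta}$ with the same exponent $\beta$; a separate argument (integrating $f\ge\gamma u^\alpha$ resp.\ using that $u$ exceeds $\theta_0$ on a large set) closes the induction and fixes the constant. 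For the $\alpha$-monostable case the ODE comparison $y' = \gamma y^\alpha$ on the bulk, feeding a tail of size $\gamma (\text{bulk mass})\,x^{-1-2s}$, reproduces $\beta=\alpha/(2s(\alpha-1))$; for ignition one instead uses a traveling-front-like profile for a modified ignition reaction on bounded sets and lets the front location grow like $t^{1/(2s)}$ via the tail feedback, which is where the restriction $s<\tfrac12$ (making $\int^\infty x^{-1-2s}\,dx$ converge but $\int^\infty x\cdot x^{-1-2s}\,dx$ the relevant borderline) enters.

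The main obstacle, I expect, is the lower bound in the ignition case: because $f$ vanishes identically near $u=0$, one cannot seed propagation from arbitrarily small tail values—the solution must be pushed above $\theta_0$ on a genuine interval before the reaction helps at all. One has to show that the region $\{u>\theta_0\}$ itself expands at rate $t^{1/(2s)}$, which requires quantifying how the mass of the bulk $\{u\approx 1\}$ grows and how efficiently the kernel transfers it to distance $\sim t^{1/(2s)}$; a clean way is to exhibit, for any prescribed speed profile $R(t)\sim \eta\, t^{1/(2s)}$ with $\eta$ small, a compactly supported subsolution $\underline u(t,x)=\phi\!\left(\frac{x-R(t)}{L}\right)$ on $(-\infty,R(t))$ (with $\phi$ a bump above $\theta_0$) whose nonlocal-in-space error is controlled because $\dot R(t)=O(t^{1/(2s)-1})\to 0$ and $R(t)^{1-2s}/R'(t)$ stays bounded precisely when $s<\tfrac12$. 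Matching the constant in front of $t^{1/(2s)}$ between the upper and lower bounds is not needed (the theorem only asserts $0<\liminf\le\limsup<\infty$), which is what makes the self-similar/comparison strategy feasible without solving the limiting profile equation exactly.
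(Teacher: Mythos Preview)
Your overall strategy—compare with explicit sub/supersolutions whose level sets scale like $t^{1/(2s)}$ resp.\ $t^{\alpha/(2s(\alpha-1))}$—matches the paper's, but both of your concrete constructions for the ignition case have genuine gaps.

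\textbf{Upper bound.} The ansatz $v(t,x)=\min\{1,\,A(t)(x_+{+}1)^{-2s}\}$ with $A(t)\sim t$ fails to be a supersolution in the \emph{reaction zone} $\{\theta_0\le v<1\}$. With this profile that zone has width comparable to the front position $\sim A(t)^{1/(2s)}$, so $v_t\sim (A'/A)\,v\sim v/t\to 0$ there, while $(-\partial_{xx})^s v$ is at best $O(t^{-1})$ for such stretched profiles (cf.\ Lemma~\ref{L.6.1}). Neither term can dominate $f(v)\ge\delta>0$, and the supersolution inequality breaks down. The paper resolves this by a substantially more delicate construction (Section~\ref{S3}): the supersolution $\Phi^k$ has a \emph{narrow} linear piece of width $t^{1/(2s)-1}$ taking it from the plateau $\Phi^k=1$ to a level below $\theta_0$, so that $\Phi^k_t$ stays of order $1$ there; the further descent to $0$ is then accomplished by a staircase of $k$ linear pieces with carefully graded widths $\beta_n^k\, t^{1/(2s)-(2s)^n}$, glued to an $x^{-2s}$ tail. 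This non-uniform stretching is the paper's main new idea and, as the introduction explains, appears to be essential for optimal ignition supersolutions.

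\textbf{Lower bound.} Your translating-bump subsolution $\underline u(t,x)=\phi\bigl((x-R(t))/L\bigr)$ with fixed width $L$ cannot work: for $s<\tfrac12$ one has $\dot R(t)\sim t^{1/(2s)-1}\to\infty$ (not $\to 0$ as you wrote), so the transport term $-\dot R(t)\,\phi'/L$ blows up while $(-\partial_{xx})^s$ of a fixed-shape bump stays bounded, and the subsolution inequality fails at the leading edge where $f=0$. The paper instead uses a \emph{self-similarly stretching} subsolution $\Psi(t,x)=\bar u_\theta\bigl(bt^{-1/(2s)}|x|\bigr)$, where $\bar u_\theta$ is the compactly supported stationary strict subsolution from Lemma~\ref{L.2.4}. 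Under this scaling both $\Psi_t$ and $(-\Delta)^s\Psi$ are $O(t^{-1})$, and the strict inequality in \eqref{2.18'x} closes the estimate for all large $t$ (Theorem~\ref{T.4.3}).

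For part~(ii), the upper bound is taken from \cite{coville2020propagation}, as you anticipate; the lower bound in the paper again proceeds via an explicit (non-compactly-supported, radially decreasing) subsolution built from the profile $(a_1^{-1}t^{1-\kappa}(|x|^\beta-a_2t^\kappa))^{-1/(\alpha-1)}$ composed with a cutoff (Section~\ref{S5}), rather than a Duhamel bootstrap.
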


{\it Remarks.}  1.  The leading orders of the  propagation rates in both (i) and (ii) only depend on $s$ and  the qualitative behavior of $f$ near 0, and in (ii) they are independent of $\gamma, \gamma'$ from \eqref{1.111}.  In contrast,  for Fisher-KPP reactions they also depend on $f'(0)$ \cite{cabre2013influence}, and so  their dependence on $f$  for general 1-monostable reactions will  be much more sensitive.
\smallskip

2.  As Theorem \ref{T.3.3} below shows, (i) extends to the case when $u(0,\cdot) \leq \chi_{(-\infty,R)}$ is replaced by $\limsup_{x\to\infty} x^{-2s}u(0,x)<\infty$.  The supersolutions constructed in \cite{coville2020propagation} show that in (ii) we can instead allow $\limsup_{x\to\infty} x^{-\frac{2s}\alpha}u(0,x)<\infty$. 
\smallskip

3.  One-sided bounds in \eqref{1.111} obviously yield one-sided bounds in (ii).
\smallskip

4.  For any $s\in(0,\frac12)$, (i) can be regarded as the $\alpha\to\infty$ limit of (ii).
\smallskip

To the best of our knowledge, these are the first qualitatively optimal propagation results for front-like solutions in situations where no traveling fronts exist.  
\medskip


When it comes to localized initial data and \eqref{1.0}, the corresponding Fisher-KPP result from  \cite{cabre2013influence}  was improved by Coulon and Yangari in \cite{coulon2017exponential}.  They proved for each $\lambda\in(0,1)$ that 
\[
0< \liminf_{t\to\infty}  e^{-\frac{f'(0)}{d+2s} t} \, {\underline x}_\lambda(t;u) \le 
\limsup_{t\to\infty}  e^{-\frac{f'(0)}{d+2s} t} \, {\overline x}_\lambda(t;u) <\infty
\]
for solutions with fast-decaying initial data when  $s\in(0,1)$ and $f$ is any $C^1$ $1$-monostable reaction with $f(u)-f'(0)u=O(u^{1+\delta})$ (for some $\delta>0$). 
This also implies exponential propagation for general $1$-monostable reactions, albeit with the lower and upper exponential rates being $\frac{\gamma}{d+2s}$ and $\frac{\gamma'}{d+2s}$, respectively.   

An interesting feature of the  results in \cite{cabre2013influence,coulon2017exponential} is that, for Fisher-KPP reactions and $s<1$, the exponential propagation rates  for localized initial data differ from those for front-like data, and they also depend on the dimension.  These phenomena happen neither when $s=1$ (for any reaction), because the diffusion kernel is short range, nor for bistable reactions and $s\in(0,1)$, when ballistic propagation from localized data at the same speed as from front-like data follows from existence of traveling fronts.  

It is therefore not obvious which propagation rates one should expect for ignition and $\alpha$-monostable reactions with $\alpha>1$ when $s\in(0,1)$ and initial data are localized.  
One obviously has the ballistic lower bound \eqref{1.66}, and the same upper bounds as for front-like data (which follow immediately by comparison).  However, we are not aware of other relevant prior results  for \eqref{1.0} in this setting.  Our second main theorem therefore appears to provide the first non-trivial such result, and is again also qualitatively optimal.  It
shows that in all the cases from Theorem \ref{T.1.1}, propagation rates for localized data do coincide with those for front-like data.  In particular, unlike for 1-monostable reactions, they do not depend on the dimension.

\begin{theorem} \lb{T.1.3}
Let $0\le u\le 1$ be a solution to \eqref{1.0} such that \eqref{1.11a'} holds for some $\theta,R>0$ and  large enough $R'$  (depending on $f,s,\theta$), and let ${\underline x}_\lambda(t;u)$ and ${\overline x}_\lambda(t;u)$ be from \eqref{1.55a'} and \eqref{1.55b'}.  Then both parts of Theorem \ref{T.1.1} hold.
\end{theorem}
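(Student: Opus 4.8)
The plan is to deduce Theorem~\ref{T.1.3} from the one–dimensional Theorem~\ref{T.1.1} together with the ballistic lower bound \eqref{1.66}, whose localized–data version for \eqref{1.0} in any dimension is available since, as noted in the text, it follows from the proof of Lemma~\ref{L.2.4}. Write $\beta:=\frac1{2s}$ in case (i) and $\beta:=\frac{\alpha}{2s(\alpha-1)}$ in case (ii) for the exponent from Theorem~\ref{T.1.1}. The upper bounds are then immediate by comparison. Fix a unit vector $e$. By \eqref{1.11a'} we have $u(0,\cdot)\le\chi_{B_R(0)}\le\chi_{\{x\,:\,x\cdot e<R\}}$, and by uniqueness the solution of \eqref{1.0} with the latter datum depends only on $x\cdot e$; the normalization of $c_{s,d}$ in \eqref{1.2x} is chosen precisely so that $(-\Delta)^s$ of a function of $x\cdot e$ equals $(-\partial_{rr})^s$ in the variable $r=x\cdot e$, so this solution, as a function of $(t,x\cdot e)$, solves the one–dimensional equation \eqref{1.1} with front-like datum $\chi_{(-\infty,R)}$. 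Comparison and Theorem~\ref{T.1.1} give a constant $C$ independent of $e$ with $u(t,x)<\lambda$ whenever $x\cdot e\ge Ct^\beta$ and $t$ is large; taking $e=x/|x|$ yields $\limsup_{t\to\infty}t^{-\beta}\,\overline x_\lambda(t;u)<\infty$ for $\overline x_\lambda$ from \eqref{1.55b'}.

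For the lower bounds, the substantive part, we first use \eqref{1.66} for \eqref{1.0} (applicable because $R'$ is large enough) to obtain $T_1>0$, a constant $\kappa>0$, and a value $\theta_1$ — with $\theta_1>\theta_0$ for ignition $f$ and $\theta_1\in(0,1)$ for $\alpha$-monostable $f$ — such that $u(t,\cdot)\ge\theta_1\chi_{B_{\kappa t}(0)}$ for all $t\ge T_1$; in particular, at a fixed but arbitrarily large time $T_1$ we have $u(T_1,\cdot)\ge\theta_1\chi_{B_{\rho_0}(0)}$ with $\rho_0$ as large as we wish. We then build a radial subsolution $\underline u(t,x)=g(t,|x|)$ of \eqref{1.0} issuing from $\theta_1\chi_{B_{\rho_0}(0)}$, adapting the subsolution used for the lower bound in Theorem~\ref{T.1.1}: on a sequence of time intervals we track a core radius $\rho_n$ on which $\underline u\approx1$, use the nonlocal diffusion generated by the core to produce on a larger ball $B_{\rho_{n+1}}(0)$ a tail that reaches $\theta_0$ (for ignition, after which the reaction lifts $\underline u$ to near $1$ there) resp.\ grows under $f(u)\ge\gamma u^\alpha$ (for $\alpha$-monostable $f$), and iterate. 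Choosing the interval lengths and the ratios $\rho_{n+1}/\rho_n$ compatibly with the scaling $(t,x)\mapsto(\Lambda^{2s}t,\Lambda x)$ of \eqref{1.0} — combined, in case (ii), with the homogeneity of $u\mapsto\gamma u^\alpha$ — forces $\rho_n\gtrsim t_n^{\beta}$, hence $\underline x_\lambda(t;u)\ge ct^\beta$ for large $t$ and $\liminf_{t\to\infty}t^{-\beta}\,\underline x_\lambda(t;u)>0$ for $\underline x_\lambda$ from \eqref{1.55a'}.

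The main obstacle is that a solid ball is, a priori, a less efficient nonlocal source than a half-space: for $|x|=R$ much larger than $r$ one only has $-(-\Delta)^s[\theta_1\chi_{B_r(0)}](x)\sim r^{d}R^{-d-2s}$, smaller by a factor $\sim(r/R)^{d-1}$ than the $R^{-2s}$ produced by a half-space, and using the former naively would degrade the exponent below $\beta$. The resolution — and the technical heart of the argument — is that the dynamics is self-accelerating, so estimates are only ever needed at distances $R$ comparable, up to a fixed multiple, to the current core radius $r$; and when $R\lesssim r$, the contribution of the annular region $\{r\lesssim|y|\lesssim R\}$, on which $\underline u$ takes intermediate, algebraically decaying values, to $(-\Delta)^s\underline u$ at a point at distance $R$ is comparable — up to constants depending only on $d$ and $s$ — to the corresponding one–dimensional front-like quantity. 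Establishing this comparison quantitatively for radial ``bump-with-tail'' profiles, and closing the induction with constants uniform in $n$, is where the real work lies; with that estimate in hand the rest runs parallel to the one–dimensional proof, with ``half-line'' replaced by ``large ball'' and ``intervals'' by ``annuli''. This is also why \eqref{1.11a'} requires $R'$ large: the ballistic stage \eqref{1.66} must already deliver a core whose radius exceeds the threshold at which this comparison becomes effective.
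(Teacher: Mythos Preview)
Your treatment of the upper bounds --- comparison with one-dimensional front-like solutions via the normalization of $c_{s,d}$ --- is correct and matches the paper exactly.

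For the lower bounds your proposal takes a genuinely different route, and it is worth noting that the paper's logical order is the reverse of what you assume: the paper proves the \emph{localized} lower bounds (Theorems~\ref{T.4.3} and~\ref{T.7.1}) directly in all dimensions, and the front-like lower bounds of Theorem~\ref{T.1.1} then follow from those by comparison.  So there are no one-dimensional front-like subsolutions in this paper for you to ``adapt''.

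The paper's constructions are also much more direct than your iterative scheme.  In the ignition case the subsolution is simply $\Psi(t,x)=\bar u_\theta\bigl(bt^{-1/(2s)}x\bigr)$, a time-dependent dilation of the compactly supported stationary subsolution $\bar u_\theta$ from Lemma~\ref{L.2.4}; since dilation turns $(-\Delta)^s$ into $b^{2s}t^{-1}(-\Delta)^s$ and the stretching term $\Psi_t$ is controlled by the strict inequality \eqref{2.18'x}, the verification is a few lines.  In the $\alpha$-monostable case the paper writes down an explicit radial profile $\Psi_\theta$ with an algebraic tail of order $|x|^{-(d+2s)}$ (Theorem~\ref{T.sub}), the key fractional-Laplacian estimate on that tail being isolated in Lemma~\ref{L.8.1}.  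No induction on scales, no uniform-in-$n$ bookkeeping.

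Your bootstrap is plausible for ignition --- the heuristic $t_{n+1}-t_n\sim\rho_n^{2s}$, hence $\rho_n\sim t_n^{1/(2s)}$, is sound --- but you yourself flag that the ``real work'' (the quantitative annulus estimate and closing the induction with uniform constants) is not done.  In the monostable case there is a more serious worry: the scaling $(t,x,u)\mapsto(\Lambda^{2s}t,\Lambda x,\Lambda^{2s/(\alpha-1)}u)$ you invoke mixes amplitude with spatial scale, so it does not by itself yield a recursion on core radii at a \emph{fixed} height $\theta_1$; and the propagation mechanism there is driven by $f(u)\sim\gamma u^\alpha$ acting on the full algebraic tail, not just on an annulus $R\lesssim r$.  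Turning your outline into a proof in case~(ii) would require substantially more than you indicate, whereas the paper's explicit subsolution sidesteps the issue entirely.
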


{\it Remark.}  We can obviously again replace $u(0,\cdot) \leq \chi_{B_R(0)}$ by $\limsup_{|x|\to\infty} |x|^{-2s}u(0,x)<\infty$ in (i), and by $\limsup_{|x|\to\infty} |x|^{-\frac{2s}\alpha}u(0,x)<\infty$ in (ii).  
\smallskip

The proofs of Theorems \ref{T.1.1} and \ref{T.1.3} rest on finding appropriate sub- and supersolutions $\Phi$ satisfying ${\underline x}_\lambda(t;\Phi)=O(t^\beta)={\overline x}_\lambda(t;\Phi)$, with $\beta=\frac 1{2s}$ for ignition reactions and $\beta=\frac{\alpha}{2s(\alpha-1)}$ for $\alpha$-monostable reactions.
Since these will accelerate in time, one cannot use the traveling front ansatz $\Phi(t,x)\sim \varphi(x-ct)$ in their construction, because then their transition regions (where they decrease from values close to $1$ to those close to $0$) would only travel with constant speed $c$.
One might instead hope to have $\Phi(t,x)\sim\varphi(x-ct^\beta)$,
which does travel with the right speed $O(t^{\beta-1})$.  However, it turns out that the acceleration of propagation also forces sub- and supersolutions to have  transition regions that stretch in time (see also \cite{garnier2017transition}).

We will therefore construct localized subsolutions of the form $\Phi(t,x)\sim \varphi(ct^{-\frac 1{2s}} x)$ in the proof of Theorems~\ref{T.1.1}(i) and \ref{T.1.3}(i).  Such functions propagate with speeds $O(t^{\frac{1}{2s}-1})$ but also ``flatten'' in space, having transition regions of widths $O(t^\frac 1{2s})$ (so the latter stretch with roughly the same speeds $O(t^{\frac{1}{2s}-1})$).  This will be sufficient for subsolutions, but we will have to employ a much more complicated construction for front-like supersolutions.  Their propagation speeds will again be $O(t^{\frac{1}{2s}-1})$, but we will need their
stretching speeds to also depend on the value of $\Phi$, and they will in fact grow from $O(t^{\frac{1}{2s}-2})$ where $\Phi\ge\theta_0$ to $O(t^{\frac{1}{2s}-1})$ where $\Phi\sim 0$.  Of course, this stretching will then accumulate over time to transition regions between different values of $\Phi$ having lengths from $O(t^{\frac{1}{2s}-1})$ to $O(t^{\frac{1}{2s}})$, meaning that these supersolutions will be flattened in a spatially non-uniform manner.
This approach, which seems to be necessary in the hunt for qualitatively optimal supersolutions in the ignition case (and hence  optimal upper bounds in Theorems \ref{T.1.1}(i) and \ref{T.1.3}(i)), makes this effort significantly more challenging and explains the complexity in our construction in Section \ref{S3} below.

This type of construction appears to be new, as all previous ones that we are aware of involve spatially uniform stretching rates.  In particular, the supersolutions for $\alpha$-monostable reactions  with $\alpha>1$ constructed in \cite{coville2020propagation}, which propagate with  optimal speeds $O(t^{\frac{\alpha}{2s(\alpha-1)}-1})$, have transition regions that stretch with speeds $O(t^{\frac{\alpha}{2s(\alpha-1)}-2})$.  The subsolutions we construct below in this case will have the same propagation and stretching speeds.  
However, unlike for ignition reactions, we are only able to achieve these optimal speeds with localized but not compactly supported  subsolutions.  This, and the fact that we need to find them in all dimensions $d\ge 1$, further complicate this part of our work.

Finally, we note that  smoothing  properties of the fractional parabolic dynamic of \eqref{1.0} mean that the sense in which our functions solve the PDE is not consequential here.  While we consider below mild solutions with uniformly continuous initial data, Theorem~\ref{T.2.1} shows that these immediately become classical.  This is also true for  bounded weak solutions, via an argument as in the proof of Theorem \ref{T.2.1} (based on the regularity results in \cite{fernandez2017regularity,KS14}), so  these three notions of solutions coincide here.  This means that our propagation results hold as well for not necessarily uniformly continuous initial data, due to the comparison principle.  Since we were not able to locate a suitable version of the latter in the literature, we prove it in Theorem \ref{T.2.2} below (which is hence of independent interest).  We in fact state it for distributional sub- and supersolutions (see Definition \ref{D.2.0'}) because the supersolutions we construct here will only be Lipschitz continuous.  

We also highlight here Lemma \ref{L.2.4} below, which constructs compactly supported stationary subsolutions to \eqref{1.0}.  These then provide initially compactly supported time-increasing solutions, which can be very convenient in the analysis of long-time dynamics of solutions (and specifically, construction of subsolutions in Section \ref{S4}).  We are not aware of such a result for \eqref{1.0} with $s\in(0,1)$ prior to our work, although its $s=1$ version is well known, and in \cite{BCHV} it was also obtained for diffusion operators with integrable kernels.

\smallskip
{\it Remark.}
Shortly before we finished  writing this paper in May 2021, we informed E.~ Bouin, J.~Coville, and G.~Legendre about it.   They posted the preprints \cite{BCL1,BCL2} on arXiv immediately afterwards, just days before we posted ours.  The main results claimed in \cite{BCL1,BCL2}  correspond to the first inequalities in  Theorem~\ref{T.1.1}(i,ii), respectively, which are our optimal lower bounds for front-like initial data
(both \cite{BCL1,BCL2} consider more general diffusion kernels in one dimension, with $x^{-1-2s}$ decay at $\pm\infty$). Unlike our constructions in Sections \ref{S4} and \ref{S5} below, the subsolution candidate functions presented in \cite{BCL1,BCL2} are front-like, so  they would not yield localized initial data results such as Theorem~\ref{T.1.3}   (even when $d=1$ because the diffusion kernels are long range for all $s\in(0,1)$).  
However, the 23-page May 2021 version of \cite{BCL2} is incomplete,
and was replaced in July 2022 by a 45-page version with a much longer proof containing many changes and additions.  Moreover, the May 2021 version of \cite{BCL1} is clearly very preliminary and no other version seemed to be available at the time the present paper went into press in July 2023.  Neither preprint appears to have been peer reviewed by that time either.

\medskip
{\bf Organization of the Paper and Acknowledgements.}
In Section \ref{S2} we collect various preliminary results, including a comparison principle for  \eqref{1.0}.  We then prove parts (i) of Theorems \ref{T.1.1} and \ref{T.1.3} in Sections \ref{S3} and \ref{S4}, and parts (ii) in Section \ref{S5} (these three sections are completely independent and can be read in any order). 

AZ acknowledges partial support by  NSF grant DMS-1900943 and by a Simons Fellowship.

\section{Well-posedness and a Comparison Principle}\lb{S2}


In this section we collect some basic well-posedness and regularity results for \eqref{1.0}.
We also prove two important (and to the best of our knowledge new) results here.
The first is a comparison principle,  Theorem~\ref{T.2.2}, which  removes certain restrictive hypotheses from previous results (see the paragraph before Definition \ref{D.2.0'}).  The second is Lemma \ref{L.2.4}, which constructs initially compactly supported time-increasing solutions to \eqref{1.0} for ignition (and therefore also for $\alpha$-monostable) reactions and all $s\in(0,1)$.

We start with the notion of mild solutions, defined via Duhamel's formula (see \cite{cabre2013influence,pazy}). We use $C_{b,u}(X)$ to denote the space 
of bounded uniformly continuous functions on $X$ (with the supremum norm), and $S_t$ to denote the semigroup generated by $(-\Delta)^s$ on $\bbR^d$.

\begin{definition} \lb{D.2.0}
We say that $u\in C([0,T);C_{b,u}(\bbR^d))$ is a  {\it mild solution} to \eqref{1.0} (and that it is {\it global} if $T=\infty$) if for each $t\in [0,T)$ we have
\[
u(t,\cdot)=S_t [u(0,\cdot)]+\int_0^t S_{t-\tau} [f(u(\tau,\cdot))] d\tau.
\]
\end{definition}

{\it Remark.}
Notice that if $u\in C([T_0,T_1];C_{b,u}(\bbR^d))$ for some $T_0<T_1$, then it is also uniformly continuous in time on $[T_0,T_1]$ and it follows that in fact $u\in C_{b,u}([T_0,T_1] \times \bbR^d)$.
\smallskip

We now have the following global well-posedness result.

\begin{theorem} \lb{T.2.0}
If $s\in(0,1)$ and $f$ is Lipschitz continuous, then
for any $u_0\in C_{b,u}(\bbR^d)$, there is a unique global mild solution $u$
to \eqref{1.0} with $u(0,\cdot)=u_0$.
\end{theorem}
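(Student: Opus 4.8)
The plan is to run the classical contraction--mapping argument on the Duhamel map appearing in Definition~\ref{D.2.0}, the decisive point being that the local existence time can be chosen independently of the size of $u_0$, so that global existence follows by iteration in time. I would first record the basic properties of $S_t$: it acts by convolution with the fractional heat kernel $p_t$, which for each $t>0$ is a smooth, strictly positive probability density on $\bbR^d$ with $p_t\rightharpoonup\delta_0$ as $t\to0^+$. Hence $S_t$ is positivity-preserving and an $L^\infty$-contraction, $\|S_tg\|_\infty\le\|g\|_\infty$; it maps $C_{b,u}(\bbR^d)$ into itself (convolving with an $L^1$ probability density does not increase the modulus of continuity); and $t\mapsto S_tg$ is continuous from $[0,\infty)$ into $C_{b,u}(\bbR^d)$, with $\|S_tg-g\|_\infty\to0$ as $t\to0^+$ for every $g\in C_{b,u}(\bbR^d)$ (split the convolution into the regions $|y|<\rho$ and $|y|\ge\rho$; the first contributes at most the modulus of continuity of $g$ at scale $\rho$, the second at most $2\|g\|_\infty\int_{|y|\ge\rho}p_t(y)\,dy\to0$). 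Fixing $T>0$, writing $X_T:=C([0,T];C_{b,u}(\bbR^d))$ with $\|u\|_{X_T}:=\sup_{t\le T}\|u(t,\cdot)\|_\infty$, and setting
\[
\Psi[u](t,\cdot):=S_t[u_0]+\int_0^t S_{t-\tau}\big[f(u(\tau,\cdot))\big]\,d\tau,
\]
one checks --- routinely, from the properties above and continuity of $\tau\mapsto f(u(\tau,\cdot))$ into $C_{b,u}(\bbR^d)$ --- that $\Psi:X_T\to X_T$.

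Let $L:=\Lip(f)$, extending $f$ to a globally Lipschitz function on $\bbR$ if necessary (this is immaterial for the present statement). The $L^\infty$-contractivity of the semigroup and the Lipschitz bound on $f$ give, for $u,v\in X_T$ and all $t\le T$,
\[
\|\Psi[u](t,\cdot)-\Psi[v](t,\cdot)\|_\infty\le\int_0^t\|f(u(\tau,\cdot))-f(v(\tau,\cdot))\|_\infty\,d\tau\le LT\,\|u-v\|_{X_T},
\]
so that taking $T=T_0:=\tfrac{1}{2L}$ (any $T_0<1/L$ works, and the case $L=0$ is trivial) makes $\Psi$ a contraction on the Banach space $X_{T_0}$. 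The Banach fixed-point theorem then yields a unique $u\in X_{T_0}$ with $\Psi[u]=u$, that is, the unique mild solution on $[0,T_0]$ with $u(0,\cdot)=u_0$.

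Because $T_0$ depends only on $L$ and not on $u_0$, I would now iterate: given the solution on $[0,kT_0]$, re-solve on $[kT_0,(k+1)T_0]$ with initial datum $u(kT_0,\cdot)\in C_{b,u}(\bbR^d)$; the semigroup identity $S_{t+\sigma}=S_tS_\sigma$ shows that each time-shift of a mild solution is again a mild solution, so these pieces concatenate into a global mild solution. Global uniqueness follows in the same manner (two global mild solutions with datum $u_0$ agree on $[0,T_0]$ by uniqueness of the fixed point, hence agree at $T_0$, hence on $[T_0,2T_0]$, and so on), or directly via Gr\"onwall's inequality applied to $t\mapsto\|u(t,\cdot)-\tilde u(t,\cdot)\|_\infty$. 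The only slightly delicate point is the claim $\Psi:X_T\to X_T$: boundedness and uniform continuity in $x$ are immediate from the convolution structure, while $t$-continuity into $C_{b,u}(\bbR^d)$ follows by writing, for $t'>t$, the difference of the Duhamel integrals as $\int_t^{t'}S_{t'-\tau}[\,\cdot\,]\,d\tau+\int_0^t\big(S_{t'-\tau}-S_{t-\tau}\big)[\,\cdot\,]\,d\tau$ and invoking the strong continuity of the semigroup together with dominated convergence --- all classical (cf.~\cite{cabre2013influence,pazy}), so no genuine obstacle is expected.
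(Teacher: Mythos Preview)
Your proposal is correct and follows exactly the approach the paper indicates: the paper does not give a detailed proof of Theorem~\ref{T.2.0} but only a remark stating that it follows from the standard fixed-point argument for the Duhamel map $N$ on $C([0,T];C_{b,u}(\bbR^d))$ with $T$ small, citing \cite{cabre2013influence,lim17}. Your write-up simply fills in those standard details (strong continuity and $L^\infty$-contractivity of $S_t$, the contraction estimate $\|\Psi[u]-\Psi[v]\|_{X_T}\le LT\|u-v\|_{X_T}$, and the observation that $T_0$ depends only on $\Lip(f)$ so one can iterate to a global solution), so there is nothing to add.
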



{\it Remark.} 
Theorem \ref{T.2.0} can be proved via a standard fixed point argument using that 
\[
N[u] (t,\cdot):=S_t [u(0,\cdot)]+\int_0^t S_{t-\tau} [f(u(\tau,\cdot))] d\tau
\]
defines a contraction mapping $N$ on the subspace of $u\in C([0,T];C_{b,u}(\bbR^d))$ with $u(0,\cdot)=u_0$ when $T$ is sufficiently small, see for instance \cite{cabre2013influence,lim17} (both these papers concern more general diffusion operators than $(-\Delta)^s$).  We note that although $f'\in C_{b,u}(\bbR)$ is assumed in \cite[Sections 2.3 and 2.4]{cabre2013influence}, this can easily be relaxed to $f$ being Lipschitz (we prefer to consider here this case instead of $f\in C^1([0,1])$). 
We also mention that a viscosity-solutions-based approach to well-posedness, via maximum principles for general non-local nonlinear PDEs and Perron's method, was used in \cite{caffarelli2011regularity,cardaliaguet2011holder,jakobsen2005continuous}.  
\smallskip

We next turn to a comparison principle for \eqref{1.0}, and the related definition of sub- and supersolutions.  
 We were not able to use in this work comparison principles that we found in the literature, as these do not quite apply to the Lipschitz continuous sub- and supersolutions we construct below  (mild solutions have better regularity, see Theorem \ref{T.2.1}).  For instance 
 \cite[Proposition 2.8]{cabre2013influence} only applies to classical sub- and supersolutions that satisfy an extra hypothesis on their order as $|x|\to \infty$ at all times $t\ge 0$,  while \cite[Proposition 2.11]{cabre2013influence} only applies to mild solutions to $u_t+(-\Delta)^su=h(t,x)$ with uniformly continuous $h$. 
We therefore prove here a comparison principle without extra hypotheses and in the more general distributional sense, which then also applies to mild solutions due to Remark 2  below.



\begin{definition} \lb{D.2.0'}
We say that  $u\in C((T_0,T_1); C_{b,u}(\mathbb{R}^d))$ is a {\it subsolution (supersolution)} to \eqref{1.0} if for each $0\le \varphi\in C_c^\infty((T_0,T_1)\times\bbR^d)$ we have
\[
\int_{T_0}^{T_1} \int_{\bbR^d} \left[ - u(t,x)\varphi_t(t,x)+u(t,x)(-\Delta)^s\varphi(t,x)-f(u(t,x))\varphi(t,x) \right]\, dxdt\leq 0 \quad (\geq  0).
\]
\end{definition}

{\it Remarks.}
1.  Note that when $s\in(0,\frac 12)$, any bounded Lipschitz continuous function $u$ has bounded $(-\Delta)^{s} u$; and if it satisfies 
$ u_t+(-\Delta)^{s} u-f(u) \leq 0$ ($\geq  0$) for a.e.~$(t,x)$, then it is clearly a subsolution (supersolution) to \eqref{1.0}.
\smallskip

2.  It is easy to show that a mild solution $u$ to \eqref{1.0} on time interval $[0,T)$ is both a sub- and a supersolution on time interval $(0,T)$. Indeed, let $u_\eps:=\phi_\eps*u$, with $\phi_\eps$ a smooth space-time mollifier as in the following proof.  Then $u_\eps$ is a classical solution to $u_t+(-\Delta)^su=f_\eps(t,x)$ on time interval $(T_0+\eps,T_1-\eps)$, where $f_\eps:=\phi_\eps*(f\circ u)$. Since $u_\eps\to u$, and $f_\eps\to f\circ u$ uniformly on $[t_0,t_1]\times\bbR^d$ for any  $[t_0,t_1]\subseteq (T_0,T_1)$ (see the remark after Definition \ref{D.2.0}), this
yields the claim.
\smallskip




\begin{theorem}\lb{T.2.2}
Let $s\in (0,1)$ and $f$ be Lipschitz continuous.  If $u,v\in C([0,T); C_{b,u}(\mathbb{R}^d))$ are, respectively, a subsolution and a supersolution to \eqref{1.0} on time interval $(0,T)$ and satisfy $u(0,\cdot)\leq v(0,\cdot)$ on $\bbR^d$, then $u\leq v$ on $[0,T)\times\bbR^d$.
\end{theorem}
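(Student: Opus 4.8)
The plan is to prove this by a Gronwall-type energy estimate applied to the positive part $w := (u-v)^+$, after first dealing with the technical difficulty that $u$ and $v$ are only distributional sub/supersolutions. First I would reduce to a cleaner setting by mollification: let $\phi_\eps$ be a space-time mollifier and set $u_\eps := \phi_\eps * u$, $v_\eps := \phi_\eps * v$ on a slightly shrunk time interval. Using the definitions of sub/supersolution together with the fact that $(-\Delta)^s$ commutes with convolution, $u_\eps$ and $v_\eps$ become \emph{classical} sub/supersolutions of $u_t + (-\Delta)^s u = g_\eps$ where $g_\eps^{(u)} := \phi_\eps*(f\circ u)$ etc.; more precisely $\partial_t u_\eps + (-\Delta)^s u_\eps \le \phi_\eps*(f\circ u)$ pointwise, and since $\phi_\eps*(f\circ u) \to f\circ u$ and $u_\eps \to u$ uniformly on compact time subintervals (by the remark after Definition \ref{D.2.0}), all error terms are controlled. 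So it suffices to prove the comparison statement for functions that satisfy the differential inequality pointwise and are smooth in $x$, bounded and uniformly continuous, up to an additive error that vanishes as $\eps \to 0$.

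Next I would run the energy estimate. Let $w := u-v$ (the mollified versions), which satisfies $\partial_t w + (-\Delta)^s w \le f(u) - f(v) \le L|w| = L\,\mathrm{sgn}(w)\,w$ with $L$ the Lipschitz constant of $f$ (modulo the $\eps$-error). Multiply by $w^+$ and integrate over $\bbR^d$. The key structural fact is the nonlocal analogue of $\int w^+ (-\Delta)^s w \ge 0$: using the singular-integral formula \eqref{1.2x} and the symmetrization trick,
\[
\int_{\bbR^d} w^+(x)\,(-\Delta)^s w(x)\,dx = \frac{c_{s,d}}{2}\iint_{\bbR^{2d}} \frac{\big(w^+(x)-w^+(y)\big)\big(w(x)-w(y)\big)}{|x-y|^{d+2s}}\,dx\,dy \ge 0,
\]
since $(w^+(x)-w^+(y))(w(x)-w(y)) \ge (w^+(x)-w^+(y))^2 \ge 0$ pointwise. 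This gives
\[
\frac{d}{dt}\,\frac12\int_{\bbR^d} (w^+)^2\,dx \le L\int_{\bbR^d}(w^+)^2\,dx + (\text{error}),
\]
and since $w^+(0,\cdot) = 0$ (in the limit; the mollified initial data are $\le \eps$-close), Gronwall's inequality forces $\int (w^+)^2\,dx \equiv 0$ after letting $\eps\to0$, hence $u\le v$. One caveat: $w$ need not be in $L^2(\bbR^d)$ — it is only bounded — so I would instead work with a weight, e.g. multiply by $w^+\psi_R^2$ where $\psi_R$ is a smooth cutoff at scale $R$, track the commutator $[\,(-\Delta)^s, \psi_R^2\,]$ (which is lower order and $O(R^{-2s})$ in the relevant norm), and send $R\to\infty$; alternatively use an exponential weight $e^{-\mu|x|}$ adapted to the $|x|^{-d-2s}$ tails of the kernel. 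Either weighting scheme turns the finite-mass-free estimate into a legitimate Gronwall argument.

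The main obstacle I anticipate is precisely this loss of integrability combined with the \emph{nonlocality} of $(-\Delta)^s$: unlike the local case, one cannot simply localize, because the kernel tails connect the cutoff region to all of space, so the commutator term $\int w^+\,\big[(-\Delta)^s(\psi_R^2 w) - \psi_R^2 (-\Delta)^s w\big]$ must be estimated carefully — it involves $\iint \frac{\psi_R^2(x)-\psi_R^2(y)}{|x-y|^{d+2s}} w^+(x) w(y)$, which is bounded using $|\psi_R^2(x)-\psi_R^2(y)| \lesssim R^{-1}\min\{|x-y|, R\}$ and $\|w\|_\infty < \infty$, giving a contribution $o(1)$ as $R\to\infty$ uniformly on $[0,t]$. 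Handling this cutoff error cleanly, together with the bookkeeping of the mollification error (which requires the uniform continuity built into $C_{b,u}$ and the remark after Definition \ref{D.2.0}), is where the real work lies; the positivity of the quadratic form and the Gronwall step are routine once those are in place.
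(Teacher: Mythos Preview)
Your overall strategy---mollify to reduce to smooth functions, then run an energy estimate on $w^+=(u-v)^+$ using the positivity of the fractional Dirichlet form---is a legitimate alternative to the paper's argument, which is quite different.  The paper never leaves $L^\infty$: after the same mollification it performs the exponential change of variables $\tilde u=e^{Kt}u$, $\tilde v=e^{Kt}v$ so that the effective nonlinearity $g(t,u)=Ku+e^{Kt}f(e^{-Kt}u)$ is nondecreasing in $u$; then for $w_\eps=u_\eps-v_\eps$ it uses Duhamel's formula for $(w_\eps)_t+(-\Delta)^s w_\eps=h_\eps$ together with order-preservation of $S_t$ and $S_t[1]\equiv 1$ to obtain a scalar Gronwall inequality for $\xi(t)=\sup_x w_\eps(t,x)^+$.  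This avoids all integrability issues, and the monotonicity of $g$ is what replaces your use of the quadratic form.

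By contrast, your localization step has a genuine gap: neither scheme you propose actually works.  For the cutoff, the commutator term does \emph{not} give $o(1)$.  With $w$ merely bounded, the integral
\[
\iint \frac{|\psi_R^2(x)-\psi_R^2(y)|}{|x-y|^{d+2s}}\,|w^+(x)|\,|w(y)|\,dx\,dy
\]
is of order $\|w\|_\infty^2\,R^{d-2s}$ (one of $x,y$ must lie in $B_{2R}$, and the remaining integral contributes $R^{-2s}$), which blows up and cannot be absorbed by Gronwall.  The exponential weight $\rho=e^{-\mu|x|}$ fails for a different reason: the heavy tails of the kernel force $(-\Delta)^s\rho(x)\sim -c|x|^{-d-2s}$ for large $|x|$ (the dominant contribution is $-\int_{|y|\ll|x|}\rho(y)\,|x-y|^{-d-2s}\,dy$), so $(-\Delta)^s\rho/\rho\to-\infty$ and the required bound $(-\Delta)^s\rho\ge -C\rho$ is false.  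The fix is a \emph{polynomial} weight matched to the kernel, e.g.\ $\rho(x)=(1+|x|^2)^{-(d+2s)/2}$, for which $|(-\Delta)^s\rho|\le C\rho$; combined with the pointwise Kato inequality $w^+(-\Delta)^s w\ge (-\Delta)^s\big(\tfrac12(w^+)^2\big)$ and self-adjointness, this yields $\int w^+\rho\,(-\Delta)^s w\ge -C\int (w^+)^2\rho$, and then Gronwall closes.  So your approach is salvageable, but not with either of the two localizations you actually wrote down.
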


\begin{proof}
It {suffices} to prove that $u\le v$ on $[0,T']\times\bbR^d$ for any $T'<T$, so we can assume that $T<\infty$ and  $u,v\in C([0,T]; C_{b,u}(\mathbb{R}^d))$.
Let $K:=\max\{\|f'\|_\infty,\|u\|_\infty,\|v\|_\infty\}$, and then let
\[
\tilde{u}(t,x):=e^{Kt}u(t,x),\qquad \tilde{v}(t,x):=e^{Kt}v(t,x),\qquad g(t,u):=K{u}+e^{Kt}f(e^{-Kt}{u}).
\]
Then  $\tilde{u}$ and $\tilde{v}$ are, respectively, a subsolution and a supersolution to \eqref{1.0} with $f(u)$ replaced by $g(t,u)$,  on time interval $(0,T)$. 
Moreover, $\max\{\|\tilde u\|_\infty,\|\tilde v\|_\infty\}\le Ke^{KT}$, and 
if we let $S:=[0,T]\times [-Ke^{KT},Ke^{KT}]$, then 
for all $(t,u)\in S$ we have
\beq\lb{3330}
0\le g_u(t,u)\le 2K \qquad\text{ and }\qquad |g_t(t,u)|\leq Ke^{KT} \|f|_{[-K,K]}\|_{{\infty}}+K^3=:K'.
\eeq
Finally, we have  $\tilde{u}(0,\cdot)\leq \tilde{v}(0,\cdot)$, and proving $u\leq v$ is equivalent to proving $\tilde{u}\leq \tilde{v}$. 
We will therefore slightly abuse notation, and write  below $u,v$ instead of $\tilde{u}, \tilde{v}$.

For any small $\eps\in (0,1)$, fix a smooth mollifier $\phi_\eps\ge 0$ with $\supp\, \phi_\eps \subseteq B_\eps(0) \subseteq \bbR^{d+1}$ and $\int_{\bbR^{d+1}}\phi_\eps(t,x)d(t,x)=1$.
Then for $(t,x)\in[0,T-2\eps]\times\bbR^d$ let 
\[
u_\eps(t,x):=(\phi_\eps*u)(t+\eps,x)\qquad\text{and}\qquad v_\eps(t,x):=(\phi_\eps*v)(t+\eps,x)
\]
(the $\eps$-shifts in time allow us to define $u_\eps,v_\eps$ at $t=0$).
The remark after Definition \ref{D.2.0} shows that there is $\omega_\eps\geq 0$ such that $\lim_{\eps\to0^+}\omega_\eps=0$ and
\beq\lb{3332}
\sup_{\substack{(t,x),(\tau,y)\in [0,T]\times\bbR^d,\\
\max\{ |t-\tau|, |x-y|\}\leq 2\eps}} \max\{ |u(t,x)-u(\tau,y)|, |v(t,x)-v(\tau,y)|\} \leq \omega_\eps.
\eeq
Then on $[0,T-2\eps]\times\bbR^d$ we have
\beq\lb{3333}
\max\{ |u_\eps-u|,|v_\eps-v| \} \leq \omega_\eps.
\eeq

Now consider any $(t,x)\in [0,T-2\eps]\times\bbR^d$.  If there is $(t',x')\in B_\eps(t+\eps,x)$ such that $u(t',x')\leq v(t',x')$, then it follows from \eqref{3330} and \eqref{3332} that
\begin{align*}
(\phi_\eps* [g(\cdot, u(\cdot,\cdot))  - g(\cdot, v(\cdot,\cdot)) ] ) & \, (t+\eps,x) \\
&\leq g(t',u(t',x'))-g(t',v(t',x')) 
+ 4\eps \|g_t |_{S} \|_{\infty} + 2\omega_{\eps} \|g_u |_{S}\|_{\infty}  \\
&\leq 4\eps K' + 4K\omega_{\eps}.
\end{align*}
If instead $u\geq v$ on $B_\eps(t+\eps,x)$, then $g_u\le 2K$ yields
\[
(\phi_\eps* [g(\cdot, u(\cdot,\cdot))  - g(\cdot, v(\cdot,\cdot)) ] ) \, (t+\eps,x)\leq 2K(\phi_\eps * (u-v))(t+\eps,x)\leq 2K(u_\eps(t,x)-v_\eps(t,x)).
\]
From these estimates, and from $u$ and $v$ being, respectively, a sub- and a supersolution, we get for $w_\eps:=u_\eps-v_\eps$ and $\omega'_\eps:=4\eps K' + 4K\omega_{\eps}$ ($\to 0$ as $\eps\to 0$) that
\[
h_\eps:=(w_\eps)_t+(-\Delta)^s w_\eps\leq (\phi_\eps*[g(\cdot, u(\cdot,\cdot))  - g(\cdot, v(\cdot,\cdot)) ]) \,(\cdot+\eps,\cdot)\leq  \max\{2K w_\eps,\omega'_{\eps}\}.
\]
Duhamel's principle for smooth solutions to the linear PDE 
\[
w_t+(-\Delta)^s w=h(t,x)
\]
 (see \cite{cabre2013influence,pazy,lim17}) now yields
\[
w_\eps(t,x)\leq S_t [w_\eps(0,\cdot)](x)+\int_0^t S_{t-\tau} [\max\{2K w_\eps(\tau,\cdot),\omega'_{\eps} \} ](x)\,d\tau
\]
for all $(t,x)\in [0,T-2\eps]\times\bbR^d$.
Since $S_t$ preserves order and $S_t[1]\equiv 1$, we have $S_t[w_\eps(0,\cdot)]\leq 2\omega_\eps$ on $\bbR^d$ (by $u(0,\cdot)\leq v(0,\cdot)$ and \eqref{3333}), and if we let $\xi(t):=\max\{\sup_{x\in\bbR^d}w_\eps(t,x),0\}$, then
\[
\xi(t)\leq 2\omega_\eps+T\omega'_{\eps}+ \int_0^t 2K \xi(\tau)\, d\tau
\]
for each $t\in [0,T-2\eps]$. Gr\" onwall's inequality now yields
\[
\xi(t) \leq (2\omega_\eps+T\omega'_{\eps})\,e^{2KT}
\]
for $t\in [0,T-2\eps]$, hence
\[
\limsup_{\eps\to 0} \sup_{(t,x)\in  [0,T-2\eps]\times\bbR^d} w_\eps(t,x) = 0.
\]
This shows that $u\leq v$ on $[0,T]\times\bbR^d$, finishing the proof. 
\end{proof}

Similarly to parabolic PDE with classical diffusion, the dynamics of \eqref{1.0} provides certain smoothing, which is the basis of relevant regularity results.  The following theorem, in which we suppress dependence of all constants on $d$ in the notation, is consequence of results from \cite{KS14,fernandez2017regularity} (we also refer the reader to \cite{cardaliaguet2011holder,caffarelli2009regularity,jin2015schauder} for the viscosity solutions setting). 

\begin{theorem}\lb{T.2.1}
Let $s\in (0,1)$ and $f$ be Lipschitz continuous, and let $u\in C([0,T);C_{b,u}(\bbR^d))$ be a bounded mild solution to \eqref{1.0}.  There is $\sigma=\sigma(s)>0$ such that for any $\tau\in (0,T)$ there is $C=C(s,f,\|u\|_\infty,\tau)>0$ such that
\[
\|u\|_{ C^{1+\sigma/2s,2s+\sigma}([\tau,T)\times\bbR^d)}\leq C.
\]
In particular, $u$ is a classical solution to \eqref{1.0}.
\end{theorem}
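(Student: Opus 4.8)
The plan is to use the fixed-point/Duhamel representation of the mild solution together with the known interior parabolic Schauder-type estimates for the fractional heat equation $w_t+(-\Delta)^s w=h$, bootstrapping on the regularity of the source term $h=f(u)$. First I would reduce matters to a local statement: fix $\tau\in(0,T)$, pick $\tau_0\in(0,\tau)$, and work on the slab $[\tau_0,T)\times\bbR^d$, where by Theorem \ref{T.2.0} and the remark after Definition \ref{D.2.0} the solution $u$ is bounded and uniformly continuous in space-time, with a modulus and sup-norm controlled by $s,f,\|u\|_\infty,\tau_0$. Since $f$ is Lipschitz, the source $h(t,x):=f(u(t,x))$ inherits the same uniform boundedness and the same uniform modulus of continuity (up to the factor $\|f'\|_\infty$). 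This is the base of the bootstrap.

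Next I would invoke the regularity theory from \cite{KS14,fernandez2017regularity} for the linear equation $w_t+(-\Delta)^s w=h$ on $[\tau_0,T)\times\bbR^d$ with $w(\tau_0,\cdot)=u(\tau_0,\cdot)$ bounded: a bounded source $h\in L^\infty$ yields $w\in C^{\gamma}$ in space-time (parabolically scaled) for some $\gamma=\gamma(s)>0$ on $[\tau_1,T)\times\bbR^d$ for any $\tau_1>\tau_0$, with the norm bounded by $C(s,\|h\|_\infty,\|u(\tau_0,\cdot)\|_\infty,\tau_1-\tau_0)$. By uniqueness of the mild solution, $u$ equals this $w$, so $u\in C^{\gamma}$ on that slab. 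Then $h=f(u)$ is $C^{\gamma'}$ with $\gamma'=\min\{\gamma,1\}$ (again using $f$ Lipschitz), and the Schauder estimate for the fractional heat equation upgrades $u$ to $C^{\gamma'+2s}$ in space and the corresponding parabolic Hölder class in time, i.e.\ $C^{1+\sigma/2s,\,2s+\sigma}$ for an appropriate $\sigma=\sigma(s)>0$, on a slightly smaller slab $[\tau,T)\times\bbR^d$; shrinking the time interval finitely many times absorbs the loss. Each step is quantitative, with constants depending only on $s$, $f$ (through $\|f'\|_\infty$ and $\|f|_{[-\|u\|_\infty,\|u\|_\infty]}\|_\infty$), $\|u\|_\infty$, and the distances between the successive time slices, which are themselves fixed in terms of $\tau$; this gives the stated dependence $C=C(s,f,\|u\|_\infty,\tau)$. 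Once $u\in C^{1+\sigma/2s,2s+\sigma}$, the pointwise identity $u_t+(-\Delta)^s u=f(u)$ holds classically (the fractional Laplacian is well-defined pointwise since $2s+\sigma>2s$ and $u$ is bounded), so $u$ is a classical solution.

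The main obstacle is bookkeeping rather than conceptual: one must cite the linear estimates from \cite{KS14,fernandez2017regularity} in exactly the right form — interior (in time) estimates that do not require regularity of the initial data, only its boundedness, and that quantify the constant in terms of the distance to the initial time and $\|h\|_\infty$ — and then check that a single application of the Hölder estimate followed by a single application of the Schauder estimate already reaches the target exponent $2s+\sigma$, so that no infinite bootstrap (which could degrade constants uncontrollably) is needed. A minor subtlety is the case $2s+\sigma$ crossing the threshold $1$ or $2$ in the spatial Hölder scale, where one interprets $C^{2s+\sigma}$ as the appropriate Hölder–Zygmund space; since $f$ is only Lipschitz we cannot push past $C^{1,\cdot}$ regularity for $f(u)$ in general, which is why the statement caps at one step of Schauder improvement and does not claim higher smoothness. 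Finally, the argument for weak solutions alluded to in the introduction is identical once one knows (via Theorem \ref{T.2.2} and the density/approximation argument) that bounded weak solutions coincide with mild ones.
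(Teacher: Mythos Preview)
Your proposal is correct and follows essentially the same two-step bootstrap as the paper: apply the interior H\"older estimate of \cite{KS14} to obtain $u\in C^{\sigma/2s,\sigma}$ from the bounded source $f(u)$, then the Schauder estimate of \cite{fernandez2017regularity} to reach $C^{1+\sigma/2s,2s+\sigma}$.  The only technical device the paper adds is a space-time mollification $u_\eps:=\phi_\eps*u$, which classically solves $(u_\eps)_t+(-\Delta)^s u_\eps=\phi_\eps*(f\circ u)$, so that the cited theorems apply verbatim to $u_\eps$ before passing $\eps\to0$; this is precisely the ``bookkeeping'' you anticipated.
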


\begin{proof}
For any $\eps\in(0, \frac{\tau}{4})$, let $\phi_\eps$ be the space-time mollifier from the proof of Theorem \ref{T.2.2}. If $f_\eps:=\phi_\eps*(f\circ u)$, then $u_\eps:=\phi_\eps*u$ satisfies  in the classical sense the linear PDE
\beq\lb{l251}
(u_\eps)_t+(-\Delta)^su_\eps=f_\eps(t,x)
\eeq
on  $(\frac{\tau}{4},T-\eps)\times\bbR^d$.
Since $\|u\|_\infty<\infty$, we have
\[
\max\{\|u_\eps\|_\infty, \|f_\eps\|_\infty \} \leq C_1:= |f(0)|+(1+\|f'\|_\infty)(1+\|u\|_\infty).
\]
The interior H\"{o}lder estimate \cite[Theorem 1.1]{KS14} now yields  $\sigma=\sigma(s)\in(0,\min\{1,2s\})$ and $C_2=C_2(s,C_1,\tau)>0$ such that
\[
\|u_\eps\|_{C^{{\sigma/2s},\sigma} ([{\tau}/{2},T-\eps)\times\bbR^d )}\leq C_2.
\]
Since $u_\eps\to u$ uniformly on $[\frac{\tau}{2},T-\delta]\times\bbR^d$ for any $\delta>0$ by the remark after Definition \ref{D.2.0}, taking $\eps\to 0$ shows that
\[
\|u\|_{C^{{\sigma/2s},\sigma}([{\tau}/{2},T)\times\bbR^d)} \le C_2.
\] 
This and Lipschitz continuity of  $f$ yield $C_3=C_3(C_1,C_2)>0$ such that
\[
\|f_\eps\|_{C^{{\sigma}/{2s},\sigma}([{\tau}/{2},T-\eps)\times\bbR^d) } \le C_3
\]
for all $\eps\in(0,\frac{\tau}{4})$. It now follows from \eqref{l251}  and  \cite[Theorem 1.1]{fernandez2017regularity} that 
\[
\|u_\eps\|_{C^{1+{\sigma}/{2s},2s+\sigma}([\tau,T-\eps)\times\bbR^d)}\leq C_4
\]
for these $\eps$, with $C_4=C_4 (s,C_1,C_2,\tau)>0$. Taking $\eps\to 0$ finishes the proof.
\end{proof}

Finally, to obtain the lower bounds in Theorems \ref{T.1.1} and \ref{T.1.3}, we will need to use certain non-trivial initial data $ u_0:\bbR^d\to[0,1]$ satisfying
\beq\lb{2.18}
-(-\Delta)^s u_0 +f(u_0)\geq 0
\eeq
(note that the comparison principle then shows that  solutions to \eqref{1.0} with such initial data are time-increasing). The following lemma, whose proof we postpone  to the appendix, provides such functions for ignition reactions (see the remark below for monostable reactions).  

\begin{lemma}\lb{L.2.4}
Let $f$ be an ignition reaction with ignition temperature $\theta_0\in (0,1)$, and let $s\in(0,1)$. For  $\theta\in (\theta_0,1)$, there are $R_\theta\geq 1$
and a non-increasing  
smooth function $u_\theta$ on $\bbR$ (both depending also on $s,f,d$) such that 
\begin{gather}
\theta \chi_{(-\infty,0]}\leq  u_\theta\leq \theta \chi_{(-\infty,R_\theta]},  \lb{2.20} \\
\inf_{x\in\bbR} \left[ -(-\partial_{xx})^su_\theta(x)+f(u_\theta(x)) \right] \ge 0,  \lb{2.18''} \\
\inf_{x\le R_\theta} \left[ -(-\partial_{xx})^su_\theta(x)+f(u_\theta(x)) \right] > 0,  \lb{2.18'}
\end{gather}
and the function $\bar u_\theta(x):=u_\theta(|x|)$ on $\bbR^d$ satisfies
\begin{gather}
\inf_{x\in\bbR^d} \left[ -(-\Delta)^s \bar u_\theta(x)+f(\bar u_\theta(x)) \right] \ge 0,  \lb{2.18''x} \\
\inf_{|x|\le R_\theta} \left[ -(-\Delta)^s \bar u_\theta(x)+f(\bar u_\theta(x)) \right] > 0.  \lb{2.18'x}
\end{gather}
Moreover, if $0\le u\le 1$ is a global mild solution to \eqref{1.0} and $u(0,\cdot)\ge  \bar u_\theta(\cdot-x_0)$ for some $x_0\in\bbR^d$, then $u(t,\cdot)\to 1$ locally uniformly on $\bbR^d$ as $t\to\infty$.
%
%
\end{lemma}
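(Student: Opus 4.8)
The plan is to construct $u_\theta$ explicitly as a suitable rescaling and truncation of a fixed smooth profile, and then verify the three inequalities \eqref{2.18''}, \eqref{2.18'}, \eqref{2.18''x}, \eqref{2.18'x} by exploiting two facts about the fractional Laplacian: first, that for a smooth bounded function the quantity $(-\partial_{xx})^s u_\theta(x)$ is small when $u_\theta$ is ``flat'' near $x$ (small second derivatives plus small contributions from the far field when the function is nearly constant there), and second, that on the region where $u_\theta$ takes values in $(\theta_0,\theta]$ we have $f(u_\theta)\ge c>0$ by continuity of $f$ and compactness. Concretely, I would fix a non-increasing smooth $g:\bbR\to[0,1]$ with $g\equiv 1$ on $(-\infty,0]$ and $g\equiv 0$ on $[1,\infty)$, set $u_\theta(x):=\theta\, g(x/L)$ for a large parameter $L=L(s,f,d,\theta)$ to be chosen, and take $R_\theta:=L$. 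Scaling gives $(-\partial_{xx})^s u_\theta(x)=\theta L^{-2s}\big((-\partial_{xx})^s g\big)(x/L)$, so the diffusive term is $O(L^{-2s})$ uniformly in $x$, while $f(u_\theta(x))$ is bounded below by a positive constant wherever $u_\theta(x)>\theta_0$, i.e. wherever $g(x/L)>\theta_0/\theta$. Choosing $L$ large makes $\theta L^{-2s}\|(-\partial_{xx})^s g\|_\infty$ smaller than that positive constant, which handles the region $\{u_\theta>\theta_0\}$; on the complementary region $\{u_\theta\le\theta_0\}$ the reaction term vanishes, so I need the diffusive term there to be nonpositive, i.e. $(-\partial_{xx})^s g\le 0$ where $g\le\theta_0/\theta$.

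That last requirement forces a more careful choice of $g$: I want $(-\partial_{xx})^s g(y)\le 0$ for all $y$ with $g(y)$ small. This is plausible because at a point where $g$ is near its minimum value (here $0$, attained on a half-line) the nonlocal average $\int (g(y)-g(z))|y-z|^{-1-2s}dz$ is negative. To make this rigorous and quantitative I would choose $g$ so that on $\{g\le\theta_0/\theta\}$ — which I can arrange to be $[y_0,\infty)$ for some $y_0<1$ — the function $g$ is convex and decreasing, hence $(g''\le 0$ is false; rather) I use that for $y$ in a half-line where $g$ is nonincreasing and eventually identically $0$, the principal value integral is dominated by the (negative) contribution from $z>y$ where $g(z)\le g(y)$ together with a controlled positive contribution from $z<y$; taking the transition of $g$ steep enough on $[0,y_0]$ relative to its tail makes the whole expression negative there. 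Alternatively, and more robustly, I would split: build $u_\theta$ as the minimum (or a smoothed minimum) of $\theta$ and the function obtained by solving $-(-\partial_{xx})^s v = $ something explicit, or simply cite/adapt the standard construction of compactly supported subsolutions via the known fact that $(c^2-|x|^2)_+^{s}$-type ``bump'' functions have explicitly signed fractional Laplacian; but since we need monotonicity in one variable and radial monotonicity in $\bbR^d$, the cleanest route is the direct one-parameter family above with $g$ chosen once and for all with the required sign property, which I would verify by a direct estimate of the p.v. integral.

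For the radial statements \eqref{2.18''x} and \eqref{2.18'x}, I would use that $(-\Delta)^s \bar u_\theta(x)$ for $\bar u_\theta(x)=u_\theta(|x|)$ with $u_\theta$ nonincreasing satisfies $(-\Delta)^s \bar u_\theta(x)\le C\,(-\partial_{xx})^s u_\theta(|x|)$ up to a controlled error, or more directly re-run the scaling argument in $\bbR^d$: with $\bar u_\theta(x)=\theta\,\bar g(x/L)$ where $\bar g(x)=g(|x|)$, we get $(-\Delta)^s\bar u_\theta(x)=\theta L^{-2s}(-\Delta)^s\bar g(x/L)$, and the same two-region analysis applies once I know $(-\Delta)^s\bar g(y)\le 0$ whenever $\bar g(y)\le\theta_0/\theta$, i.e. for $|y|\ge y_0$; this sign statement for the radial extension follows from the one-dimensional one by the usual reduction of the $d$-dimensional kernel to a one-dimensional kernel against the radial profile (integrating out the $d-1$ transverse directions produces a positive weight, preserving the sign), which is exactly the computation encoded in the constant $c_{s,d}$ in \eqref{1.2x}. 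The hardest part is precisely this sign verification — ensuring $(-\partial_{xx})^s g\le0$ on the full half-line $\{g\le\theta_0/\theta\}$ rather than just near where $g=0$ — and getting it to survive the passage to the radial extension in all dimensions $d\ge1$.

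Finally, the convergence claim: if $u(0,\cdot)\ge\bar u_\theta$ then by \eqref{2.18''x} and the comparison principle (Theorem~\ref{T.2.2}, using Remark 2 there and the fact that $\bar u_\theta$ is a stationary subsolution) the solution $u$ is nondecreasing in $t$, so $u(t,\cdot)\uparrow w$ for some $w:\bbR^d\to[\theta,1]$; standard parabolic compactness from Theorem~\ref{T.2.1} shows $w$ is a bounded stationary solution of $(-\Delta)^s w=f(w)$ with $w\ge\theta>\theta_0$, and then \eqref{2.18'x} — the \emph{strict} inequality on a ball, which prevents $w$ from being a nonconstant steady state ``stuck'' below $1$ — together with a sliding/strong-maximum-principle argument forces $w\equiv1$; the local uniform convergence is then Dini's theorem. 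I would present this last paragraph briefly since it is the routine part.
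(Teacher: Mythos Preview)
Your overall strategy---fix a smooth profile $g$, rescale by a large $L$, and split into $\{u_\theta>\theta_0\}$ (where $f(u_\theta)\ge\delta>0$ beats the $O(L^{-2s})$ diffusion) and $\{u_\theta\le\theta_0\}$ (where you need $(-\partial_{xx})^s g<0$)---matches the paper's. The reduction to verifying the sign of $(-\partial_{xx})^s g$ on a tail is exactly the paper's reduction to a function $\varphi$ with properties (1)--(3) in its Appendix~A. That said, the paper's construction of such a $\varphi$ is a rather delicate iterative piecewise-linear one (successively flatter linear pieces glued together, each new slope chosen small enough that the strict sign on the previous pieces survives, then mollified); your ``take the transition steep enough on $[0,y_0]$'' gestures in the right direction but is not yet a proof, and a single convexity assumption will not obviously give strict negativity uniformly on $[y_0,1]$.

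The genuine gap is the radial step. Your claim that the $d$-dimensional sign follows from the one-dimensional one ``by integrating out the $d-1$ transverse directions'' conflates two different situations. The identity encoded in $c_{s,d}=c_s\big(\int_{\bbR^{d-1}}(1+h^2)^{-\frac d2-s}\,dh\big)^{-1}$ says that for a function of a \emph{single coordinate}, $u(x)=g(x_1)$, one has $(-\Delta)^s u(x)=(-\partial_{xx})^s g(x_1)$. For a \emph{radial} function $\bar g(x)=g(|x|)$ this fails:
\[
(-\Delta)^s\bar g(re_1)=c_{s,d}\int_{\bbR^d}\frac{g(r)-g\!\left(\sqrt{(r+h_1)^2+|h'|^2}\right)}{|h|^{d+2s}}\,dh,
\]
and since the argument of $g$ is not $r+h_1$, the transverse integral does not factor out as a positive weight against the one-dimensional integrand. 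Curvature of spheres near $|x|=y_0$ can genuinely alter the sign, so $(-\partial_{xx})^s g(r)<0$ does not imply $(-\Delta)^s\bar g(re_1)<0$. The paper circumvents this: having built $u_\theta$ in one dimension, it sets $v_R(x):=u_\theta(|x|-2R)$ and observes that for large $R$ and $|x|\in[R,2R+R_\theta]$ the restriction of $v_R$ to $B_R(x)$ is a $C^2$-small perturbation of the planar profile $y\mapsto u_\theta\!\left(y\cdot\tfrac{x}{|x|}+|x|-2R\right)$, so $(-\Delta)^s v_R(x)$ is close to the one-dimensional value and the \emph{strict} inequality \eqref{2.18'} persists. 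This enlarges $R_\theta$ to $2R+R_\theta$, which is permitted since $R_\theta$ may depend on $d$.

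There is also a smaller gap in your convergence paragraph: you assert the monotone limit $w$ satisfies $w\ge\theta$ on all of $\bbR^d$, but a priori you only know $w\ge\bar u_\theta$, which vanishes outside a ball. The paper obtains $w\ge\theta$ by a separate iteration: from the strict inequality \eqref{2.18'x} it deduces $u(\tau,\cdot)\ge\sup_{|y|\le r}\bar u_\theta(\cdot-y)$ for some $\tau,r>0$, and repeating this translate-comparison gives $w\ge\theta$ everywhere; only then does the classification of stationary solutions with values in $[\theta,1]$ force $w\equiv 1$. Your ``sliding/strong-maximum-principle'' remark would have to do this work explicitly.
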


{\it Remark.}  
Since for each monostable reaction $f$ and each $\theta\in(0,1)$, there is an ignition reaction $g\le f$ with ignition temperature smaller than $\theta$, the lemma extends to such $f,\theta$.


\section{Supersolutions for Ignition Reactions}\lb{S3}



In this section we prove the upper bound in Theorem \ref{T.1.1}(i), which then automatically  provides the same bound in Theorem \ref{T.1.3}(i) via the comparison principle. The main step is construction of a family of supersolutions $\Phi$ to \eqref{1.1} that satisfy ${\overline x}_\lambda(t;\Phi)=O(t^{\frac{1}{2s}})$.

We start with a simple fractional Laplacian estimate.   For $A_1<A_2$ and $\theta\in (0,1)$, let
\[
\psi_{A_1,A_2,\theta}(x):=\left\{
\begin{aligned}
    &1 &&\text{ if }x\leq A_1,\\
    &1- (1-\theta) \frac{x-A_1}{A_2-A_1} &&\text{ if }x\in ( A_1,A_2],\\
    &\theta &&\text{ if }x> A_2.
\end{aligned}\right.
\]
Note that $s<\frac{1}{2}$ and Lipschitz continuity of $\psi_{A_1,A_2,\theta}$ show that $(-\partial_{xx})^{s}\psi_{A_1,A_2,\theta}$ is bounded. 

\begin{lemma}\lb{L.6.1}
Let $s\in(0,\frac 12)$, $C_{\alpha}:= \max\{\frac {c_s}{2s(1-2s)},1\}$, $A_1<A_2$, and $\theta\in (0,1)$.  If $\varphi\leq\psi_{A_1,A_2,\theta}$ and  $\varphi(x)=\psi_{A_1,A_2,\theta}(x)$ for some $x\in \bbR$, then
\[
(-\partial_{xx})^s\varphi(x)\geq-C_s(1-\theta) (A_2-A_1)^{-2s}.
\]
\end{lemma}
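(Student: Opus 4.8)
The plan is to estimate $(-\partial_{xx})^s\varphi(x)$ directly from the singular integral formula, using that $x$ is a point where $\varphi$ touches $\psi:=\psi_{A_1,A_2,\theta}$ from below. Since $\varphi(x)-\varphi(y)\le \psi(x)-\psi(y)$ for all $y$ (because $\varphi(x)=\psi(x)$ and $\varphi\le\psi$), and the kernel $|x-y|^{-1-2s}$ is positive, we get
\[
(-\partial_{xx})^s\varphi(x)=c_s\,{\rm p.v.}\int_{\bbR}\frac{\varphi(x)-\varphi(y)}{|x-y|^{1+2s}}\,dy\ \ge\ c_s\,{\rm p.v.}\int_{\bbR}\frac{\psi(x)-\psi(y)}{|x-y|^{1+2s}}\,dy=(-\partial_{xx})^s\psi(x).
\]
So it suffices to bound $(-\partial_{xx})^s\psi(x)$ from below by $-C_s(1-\theta)(A_2-A_1)^{-2s}$ for every $x\in\bbR$.

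Next I would reduce to a normalized configuration. By translation invariance we may take $A_1=0$, and by the scaling $(-\partial_{xx})^s[\psi(\lambda\,\cdot)](x)=\lambda^{2s}(-\partial_{xx})^s\psi(\lambda x)$ together with the fact that $\psi_{0,A_2,\theta}(x)=\psi_{0,1,\theta}(x/A_2)$, we get $(-\partial_{xx})^s\psi_{0,A_2,\theta}(x)=A_2^{-2s}(-\partial_{xx})^s\psi_{0,1,\theta}(x/A_2)$. Also $\psi_{0,1,\theta}=\theta+(1-\theta)\psi_{0,1,0}$, and the constant $\theta$ contributes nothing to the fractional Laplacian, so $(-\partial_{xx})^s\psi_{0,1,\theta}=(1-\theta)(-\partial_{xx})^s\psi_{0,1,0}$. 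Hence the whole claim reduces to the single estimate
\[
(-\partial_{xx})^s\psi_{0,1,0}(x)\ \ge\ -C_s\qquad\text{for all }x\in\bbR,
\]
where $\psi_{0,1,0}$ is the function equal to $1$ on $(-\infty,0]$, to $1-x$ on $(0,1]$, and to $0$ on $(1,\infty)$ — i.e.\ the continuous piecewise-linear "ramp down" from $1$ to $0$ across $[0,1]$.

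For this remaining estimate I would split into the three cases according to where $x$ lies. When $x\le 0$ or $x\ge 1$, $\psi_{0,1,0}$ is constant near $x$, so the integrand is bounded near $y=x$ and there is no principal-value issue; one estimates $(-\partial_{xx})^s\psi_{0,1,0}(x)=c_s\int \frac{\psi_{0,1,0}(x)-\psi_{0,1,0}(y)}{|x-y|^{1+2s}}dy$ by comparing $\psi_{0,1,0}$ with the step functions $\chi_{(-\infty,0]}$ (which it lies above) resp.\ $\chi_{(-\infty,1]}$ (which it lies below), reducing to the explicit computation $c_s\int_t^\infty r^{-1-2s}dr=\frac{c_s}{2s}t^{-2s}$; the worst contribution is at $x=0^-$ or $x=1^+$ giving something of order $\frac{c_s}{2s}$. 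When $x\in(0,1)$, I would use the second-difference form $(-\partial_{xx})^s\psi(x)=\frac{c_s}{2}\int_{\bbR}\frac{2\psi(x)-\psi(x+h)-\psi(x-h)}{|h|^{1+2s}}dh$, which has no p.v.\ subtlety since $\psi$ is Lipschitz; concavity-type cancellation makes the integrand vanish except where $x\pm h$ straddles a corner of the graph (at $0$ or $1$), and since $\psi_{0,1,0}$ is concave at the corner $x=0$ and convex at $x=1$ only the corner at $x=1$ can give a negative contribution. Bounding that contribution yields a term of size at most $\frac{c_s}{2s(1-2s)}\,\mathrm{dist}(x,\{0,1\})^{-2s}$-type expressions that I would control crudely by $\frac{c_s}{2s(1-2s)}$ after noting the relevant integrals converge because $2s<1$; the factor $(1-2s)^{-1}$ is exactly the one appearing in $C_s$. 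Taking the maximum of these case bounds against $1$ gives $C_s=\max\{\frac{c_s}{2s(1-2s)},1\}$.

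The main obstacle is the bookkeeping in the $x\in(0,1)$ case: one must carefully track the sign of the second difference near the two corners and verify that the only possibly-negative piece is controlled by a convergent integral with the stated constant, rather than obtaining a bound that blows up as $x\to 1^-$. The clean way around this is to not integrate all the way to the corner on the "bad" side but instead dominate $\psi_{0,1,0}$ there by the appropriate affine/step comparison function as in the $x\ge 1$ case, so that the near-singular part of the integral is handled by the same $\int r^{-1-2s}dr$ computation and only the (harmless, bounded, non-negative-contribution) far part remains; this keeps every constant explicit and matches $C_s$.
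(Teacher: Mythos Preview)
Your reduction to $\varphi=\psi$ and then by translation/scaling to $\psi_{0,1,0}$ is correct and matches the paper's first step. The difference is in how you handle the remaining one-variable problem, and there your case analysis has a real gap.

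For $x\ge 1$ you compare $\psi_{0,1,0}$ from above with $\chi_{(-\infty,1]}$, which yields
\[
(-\partial_{xx})^s\psi_{0,1,0}(x)\ \ge\ -c_s\int_{-\infty}^{1}\frac{dy}{(x-y)^{1+2s}}=-\frac{c_s}{2s}(x-1)^{-2s},
\]
and this diverges to $-\infty$ as $x\to 1^+$; it does \emph{not} give ``something of order $c_s/2s$''. The same blow-up afflicts the $x\in(0,1)$ case near $x=1$, which you do flag, but the proposed fix (``dominate by the appropriate affine/step comparison function'') is too vague: any step-function comparison with a jump at $1$ reproduces the same $(x-1)^{-2s}$ singularity, and an affine comparison must be chosen carefully so that its own fractional Laplacian is bounded independently of $x$. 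You have not said which comparison function you would use or why the resulting constant equals $C_s$.

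The paper avoids all of this with a one-line monotonicity observation: for every $h\in\bbR$ one has $\psi(x)-\psi(x+h)\ge \psi(A_2)-\psi(A_2+h)$ (this is an easy case check using that $\psi$ is nonincreasing with a single convex corner at $A_2$), hence the integrand in $(-\partial_{xx})^s\psi(x)$ is pointwise minimized in $x$ at $x=A_2$. One then computes directly at $A_2$:
\[
(-\partial_{xx})^s\psi(A_2)=-c_s(1-\theta)\left(\frac{1}{2s}+\frac{1}{1-2s}\right)(A_2-A_1)^{-2s}
=-\frac{c_s(1-\theta)}{2s(1-2s)}(A_2-A_1)^{-2s}.
\]
This gives the exact constant with no case analysis near the corner. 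I would recommend replacing your three-case treatment with this reduction to $x=A_2$.
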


\begin{proof}
By \eqref{1.2x}, it suffices to assume that $\varphi\equiv \psi_{A_1,A_2,\theta}$. Then
\[
\varphi(x)-\varphi(x+h)\geq \varphi(A_2)-\varphi(A_2+h)
\]
for each $h\in\bbR$, so 
it suffices to assume that $x=A_2$.  Now a direct computation yields
\begin{align*}
    (-\partial_{xx})^s\varphi(A_2)
    &=-c_s(1-\theta)\int_{-\infty}^{A_1-A_2} |h|^{-1-2s} dh- \frac{c_s(1-\theta)}{A_2-A_1}\int_{A_1-A_2}^{0} |h|^{-2s}dh\\
    &= -c_s(1-\theta) \left((2s)^{-1}+(1-2s)^{-1}\right)(A_2-A_1)^{-2s},
\end{align*}
finishing the proof.
\end{proof}

We will now construct an infinite family of supersolutions to \eqref{1.1} indexed by $k\in\bbN$ (see \eqref{Phi} below), each defined on a finite time interval of length $e^{e^{O(k)}}$ and obtained by gluing together $k+3$ separate pieces (all but one of them linear in space).  See the introduction for a discussion of the reasons for such a complicated construction.

Let us take any  $k\in\bbN$,  and for any $n\in\bbN_k:=\{0,1,...,k\}$ let
\[
\alpha^k_n:=\Sigma_{j=1}^{n} (k-n+j){(2s)^{j-1}}\qquad \text{ and }\qquad \beta^k_n:=2^{-\alpha_n^k},
\]
where $\sum_{j=1}^0 a_j:=0$ (so $\alpha^k_0=0$ and $\beta^k_0=1$).
We then have $\alpha^k_k\leq \sum_{j=1}^\infty j(2s)^{j-1} = \frac 1{(1-2s)^{2}}$, so
\beq\lb{6.4}
\beta^k_k\geq 2^{-\frac{1}{(1-2s)^2}}.
\eeq
(In fact, $\alpha^k_{k-n}$ is increasing in $k$ for each fixed $n\ge 0$ and converges to $\frac 1{(1-2s)^{2}} + \frac n{1-2s}$.)
Also, from $\alpha_n^k\geq k-n+1$ we have
\beq\lb{6.5}
\Sigma_{n=1}^k\beta_n^k\leq 1,
\eeq
and a simple computation yields
\beq\lb{6.5a}
\beta_n^k = 2^{-k+n-1} (\beta_{n-1}^k)^{2s}.
\eeq

We also have the following simple lemma.

\begin{lemma}\lb{L.6.2}
If  $t\geq 2^{\frac{k}{1-2s}}$, then for any $n\in\bbN_{k}\setminus\{0\}$,
\beq\lb{6.1}
\beta^k_{n} t^{\frac{1}{2s}-(2s)^{n}} \ge 2\beta^k_{n-1} t^{\frac{1}{2s}-(2s)^{n-1}}.
\eeq
\end{lemma}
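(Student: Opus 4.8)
The plan is to turn \eqref{6.1} into a purely numerical inequality in the exponents. Dividing \eqref{6.1} by the positive quantity $\beta^k_{n-1}t^{\frac1{2s}-(2s)^{n-1}}$, it becomes equivalent to
\[
\frac{\beta^k_n}{\beta^k_{n-1}}\,t^{(2s)^{n-1}-(2s)^n}\ \ge\ 2 .
\]
Here $(2s)^{n-1}-(2s)^n=(1-2s)(2s)^{n-1}$, and splitting off the $j=1$ term in the definition of $\alpha^k_n$ gives the recursion $\alpha^k_n=(k-n+1)+2s\,\alpha^k_{n-1}$ (equivalently \eqref{6.5a}), so that $\beta^k_n/\beta^k_{n-1}=2^{\alpha^k_{n-1}-\alpha^k_n}=2^{(1-2s)\alpha^k_{n-1}-(k-n+1)}$. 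Taking $\log_2$, \eqref{6.1} is therefore equivalent to
\[
(1-2s)(2s)^{n-1}\log_2 t\ \ge\ (k-n+2)-(1-2s)\alpha^k_{n-1}.
\]

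Next I would invoke the hypothesis only through $\log_2 t\ge\frac{k}{1-2s}$, which bounds the left-hand side below by $k(2s)^{n-1}$; hence it suffices to prove the $t$-free inequality
\[
k(2s)^{n-1}+(1-2s)\alpha^k_{n-1}\ \ge\ k-n+2 .
\]
Unrolling $\alpha^k_j=(k-j+1)+2s\,\alpha^k_{j-1}$ from $\alpha^k_0=0$ yields the closed form $\alpha^k_{n-1}=\sum_{i=0}^{n-2}(k-n+2+i)(2s)^i$, whence
\[
(1-2s)\alpha^k_{n-1}=(k-n+2)\bigl(1-(2s)^{n-1}\bigr)+(1-2s)\sum_{i=0}^{n-2}i\,(2s)^i,
\]
and substituting this, the inequality to be proved collapses to
\[
(1-2s)\sum_{i=0}^{n-2}i\,(2s)^i\ \ge\ (2-n)(2s)^{n-1}.
\]
For $n\ge2$ the right side is non-positive while the left side is non-negative, so this holds; the remaining case $n=1$ is immediate, since it reduces to a statement about $\beta^k_1=2^{-k}$ and $\beta^k_0=1$ alone.

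I expect the only delicate point to be the reduction itself, that is, correctly balancing the decay of $\beta^k_n$ in $n$ (the factor $2^{-(k-n+1)}$ in the ratio $\beta^k_n/\beta^k_{n-1}$) against the gain $t^{(1-2s)(2s)^{n-1}}$ produced by the larger exponent of $t$ attached to $\beta^k_n$; it is precisely the lower bound $t\ge 2^{k/(1-2s)}$ that tips this balance in the right direction. Once \eqref{6.1} has been recast as the displayed $t$-free inequality, the verification is a one-line estimate on a geometric-type sum and poses no difficulty.
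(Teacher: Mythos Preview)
Your approach is the paper's: rewrite \eqref{6.1} as an inequality of the form $t^{1-2s}\ge 2^{E_n}$ for an explicit exponent $E_n=E_n(k,s)$ and then verify $E_n\le k$. The paper does this in two lines; you carry out the same computation in more detail, and your algebra for $n\ge 2$ is correct.

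The gap is your treatment of $n=1$. You assert that this case ``is immediate,'' but in fact your own reduced inequality reads $0\ge 1$ there: the sum $\sum_{i=0}^{-1}i(2s)^i$ is empty while the right-hand side equals $(2-1)(2s)^0=1$. Unwinding to the original statement, \eqref{6.1} at $n=1$ becomes $2^{-k}t^{\frac1{2s}-2s}\ge 2\,t^{\frac1{2s}-1}$, i.e.\ $t^{1-2s}\ge 2^{k+1}$, which does \emph{not} follow from the hypothesis $t^{1-2s}\ge 2^k$. The paper's own proof glosses over the same point (a $+1$ is missing from the exponent in its first display). In other words, the lemma as stated is off by a harmless factor of $2$ at $n=1$: in the convexity step of Theorem~\ref{T.3.3'} the lemma is only needed for the slope comparisons between $L^k_{n-1}$ and $L^k_n$ with $n\ge 2$, while the $L^k_0$--$L^k_1$ comparison is handled there separately via the bound $t^{1-2s}\ge \frac{\theta_*}{2(1-\theta_*)}$. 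So your argument is fine once you restrict to $n\ge 2$, but the claim that $n=1$ is immediate should be replaced by this observation.
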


\begin{proof}
A direct computation shows that \eqref{6.1} is equivalent to
\[
t^{(2s)^{n-1}-(2s)^{n}}\geq 2^{k(2s)^{n-1}-\Sigma_{j=1}^{n-1}(2s)^{j-1}}.
\]
This is  equivalent to
\[
t^{1-2s}\geq 2^{k-((2s)^{1-n}-1)/(1-2s)},
\]
which clearly holds by the hypothesis.
\end{proof}


In the rest of this section, and in the next section, we will assume the following.

\smallskip
\begin{itemize}
    \item[ (I)] Let $f$ be an ignition reaction with ignition temperature $\theta_0\in (0,1)$, and let $s\in(0,\frac{1}{2})$.
\end{itemize}
\smallskip

Let us now define $\theta_*:=\frac{\theta_0}{2}$.
For $n\in\bbN_k$  we let
\[
\theta_n^k:=(1-2^{-k+n-1})\theta_* \qquad\text{and}\qquad \theta_{-1}^k:=1,
\]
and then for $t\ge 1$
\[
 l^k_{n}(t):=\Sigma_{j=0}^{n}\beta_j^k\,t^{\frac{1}{2s}-(2s)^j}  \qquad\text{and}\qquad l_{-1}^k(t):=0
\]
as well as
\[
L^k_n(t,x):=\theta_{n-1}^k-(\theta_{n-1}^k-\theta_n^k)\frac{x-l^k_{n-1}(t)}{l^k_{n}(t)-l^k_{n-1}(t)}.
\]
Therefore, the function
\[
\hat{L}^k(t,x):=
    L^k_n(t,x) \qquad \text{ when }x\in ( l^k_{n-1}(t),l^k_{n}(t)]\text{ for some }n\in\bbN_k
\]
is continuous on its domain  $\left\{(t,x)\in \bbR^2 \,| \, t\geq 1 \text{ and } x\in  (0,l^k_{k}(t)]\right\}$, and piecewise linear in $x$ for each fixed $t\ge 1$.

Finally, with $C_s$  from Lemma \ref{L.6.1}, let 
\beq\lb{csharp}
c_* :=\max\left\{ C_s+ 2 {\|f\|_\infty}, 
(C_s\gamma_{0})^{\frac{1}{s}},  (C_s \gamma_{0} \gamma_{1}^{2s} )^2  \right\},
\eeq
where 
\[
\gamma_{0}:=(4\theta_*^{-1})^{\frac 1{s}} \qquad \text{and} \qquad \gamma_{1}:= 2^{1+\frac{1}{(1-2s)^2}},
\]
and define the Lipschitz continuous function
\beq\lb{Phi}
\Phi^k(t,x):=\left\{
\begin{aligned}
&1 && x\leq c_*  t^\frac{1}{2s},\\
&\hat{L}^k(t,x-c_*  t^\frac{1}{2s}) && x\in ( c_*  t^\frac{1}{2s},c_*  t^\frac{1}{2s}+l^k_{k}(t)],\\
& \left[ (\theta_k^k)^{-\frac{1}{2s}}+ c_* ^{-\frac{1}{2}}t^{-\frac{1}{2s}} \left( x-c_*  t^\frac{1}{2s}-l^k_{k}(t) \right)  \right]^{-2s} && x> c_*  t^\frac{1}{2s}+l^k_{k}(t).
\end{aligned}\right.
\eeq

We now show that $\Phi^k$ is a supersolution to \eqref{1.1} on some (long for large $k$) time interval.

\begin{theorem}\lb{T.3.3'}
Let $f$ and $s$ satisfy (I), and let $\Phi^k$ be from \eqref{Phi} for each $k\in\bbN$. Then $\Phi^k$ is a supersolution to \eqref{1.1} on the time interval $(2^\frac{k}{1-2s},2^{(2s)^{-k}})$.
\end{theorem}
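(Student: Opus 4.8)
The claim is that $\Phi^k$ is a supersolution to \eqref{1.1} on the indicated time interval; since $s<\frac 12$ and $\Phi^k$ is bounded and Lipschitz in space (and piecewise smooth in $t$), by Remark~1 after Definition~\ref{D.2.0'} it suffices to verify the pointwise inequality $\partial_t\Phi^k+(-\partial_{xx})^s\Phi^k-f(\Phi^k)\ge 0$ for a.e.\ $(t,x)$. The strategy is to split $\bbR$ into the regions determined by the gluing: (a) $x\le c_*t^{1/2s}$, where $\Phi^k\equiv 1$; (b) each linear piece $x-c_*t^{1/2s}\in(l^k_{n-1}(t),l^k_n(t)]$, $n\in\bbN_k$; and (c) the power tail $x>c_*t^{1/2s}+l^k_k(t)$. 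In each region I estimate the three terms separately.

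\textbf{Region (a).} Here $\Phi^k(t,x)=1$, so $\partial_t\Phi^k=0=f(1)$, and I only need $(-\partial_{xx})^s\Phi^k(x)\ge 0$ at such $x$. Since $\Phi^k\le 1$ everywhere and equals $1$ at $x$, the defining singular integral \eqref{1.2x} is manifestly nonnegative. (At the interface $x=c_*t^{1/2s}$ one argues with one-sided values, but a.e.\ suffices.)

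\textbf{Region (b): the linear pieces.} This is the heart of the matter. Fix $n$ and a point $x$ in the $n$-th linear piece. The time derivative $\partial_t\Phi^k$: because $l^k_j(t)=\sum_{j}\beta_j^k t^{1/2s-(2s)^j}$ and the outer ``$1$'' region moves at speed $\propto t^{1/2s-1}$, differentiating $\hat L^k(t,x-c_*t^{1/2s})$ in $t$ produces a sum of terms, the dominant one coming from the $j=0$ contribution $\beta_0^k t^{1/2s}=t^{1/2s}$, i.e.\ from the rigid translation at speed $\sim t^{1/2s-1}$; on the $n$-th piece the slope is $(\theta^k_{n-1}-\theta^k_n)/(l^k_n-l^k_{n-1})$, and Lemma~\ref{L.6.2} (valid since $t\ge 2^{k/(1-2s)}$) guarantees $l^k_n(t)-l^k_{n-1}(t)\ge \tfrac12 \beta^k_n t^{1/2s-(2s)^n}$, which keeps the slope controlled and makes $\partial_t\Phi^k$ negative but of a definite size $\gtrsim -t^{1/2s-1}\cdot(\text{slope})$. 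For $(-\partial_{xx})^s\Phi^k$: on the $n$-th piece $\Phi^k$ is sandwiched below the tent function $\psi_{A_1,A_2,\theta}$ with $A_2-A_1=l^k_n(t)-l^k_{n-1}(t)$ (after recentering by $c_*t^{1/2s}$), and it agrees with that tent at $x$, so Lemma~\ref{L.6.1} gives $(-\partial_{xx})^s\Phi^k(x)\ge -C_s(\theta^k_{n-1}-\theta^k_n)(l^k_n(t)-l^k_{n-1}(t))^{-2s}$. The reaction term: either $\Phi^k(t,x)\le\theta_0$ (which, by the bookkeeping $\theta^k_n\le\theta_*=\theta_0/2$, holds on all pieces with $n\ge 1$, and even on the $n=0$ piece where $\Phi^k$ drops from $1$ to $\theta^k_0=\tfrac12\theta_*$ once $\Phi^k\le\theta_0$), so $f(\Phi^k)=0$ there; or $\Phi^k(t,x)>\theta_0$, which can only happen on the $n=0$ piece, where $|f(\Phi^k)|\le\|f\|_\infty$ is absorbed by the $2\|f\|_\infty$ built into $c_*$ via $C_s+2\|f\|_\infty$ (the first term of \eqref{csharp}) acting through the large translation speed. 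Putting these together on each piece, one needs
\[
\partial_t\Phi^k + (-\partial_{xx})^s\Phi^k \ \ge\ f(\Phi^k),
\]
and using the explicit forms, the recursion \eqref{6.5a}, the lower bound \eqref{6.4} on $\beta^k_k$, the bound \eqref{6.5} on $\sum\beta^k_n$, and Lemma~\ref{L.6.2}, this reduces to a finite set of numerical inequalities among $\theta_*,C_s,\gamma_0,\gamma_1$ that are exactly what the definition \eqref{csharp} of $c_*$ was rigged to satisfy (the constant $\gamma_0=(4\theta_*^{-1})^{1/s}$ controls the $n=0$ drop from $1$, while $\gamma_1=2^{1+1/(1-2s)^2}$ controls ratios of consecutive $l^k_n$).

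\textbf{Region (c): the power tail.} Here $\Phi^k(t,x)=[a+bt^{-1/2s}(x-x_0)]^{-2s}$ with $a=(\theta^k_k)^{-1/2s}$, $b=c_*^{-1/2}$, $x_0=c_*t^{1/2s}+l^k_k(t)$. One computes $\partial_t\Phi^k$ directly; it is negative of size $\sim t^{1/2s-1}$ times derivative factors. For $(-\partial_{xx})^s\Phi^k$, the key point is that a function of the form $y\mapsto y^{-2s}$ (suitably shifted/scaled) has a fractional Laplacian one can estimate from below by comparing with the tent function touching from above at $x$ on the left and using convexity/decay on the right — more precisely, one again squeezes $\Phi^k$ below a tent $\psi$ on a window of width $\sim l^k_k(t)$ to the left of $x$ and applies Lemma~\ref{L.6.1}, while the contribution from $y>x$ is nonnegative since $\Phi^k$ is decreasing and convex there. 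The reaction vanishes because $\Phi^k\le\theta^k_k\le\theta_*<\theta_0$ throughout the tail. The resulting inequality again collapses, via the choices $(C_s\gamma_0)^{1/s}$ and $(C_s\gamma_0\gamma_1^{2s})^2$ in \eqref{csharp}, to a true numerical statement; the factor $c_*^{-1/2}$ in the tail exponent (rather than $c_*^{-1/2s}$) and the $1/s$-power in $\gamma_0$ are precisely there so that the tail's self-similar scaling matches the $t^{1/2s}$ spatial scale.

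\textbf{Main obstacle.} The conceptual content is modest — it is a squeeze-from-above-by-tents argument plus explicit time differentiation — but the execution is delicate because the constants are tightly coupled across the $k+3$ pieces: one must track how the fractional-Laplacian loss $C_s(\theta^k_{n-1}-\theta^k_n)(l^k_n-l^k_{n-1})^{-2s}$ on each piece is compensated by the time-translation gain, and verify that the telescoping over $n$ (using \eqref{6.5a}, \eqref{6.1}) does not accumulate an unbounded defect even though the number of pieces grows with $k$. The hypothesis $t\ge 2^{k/(1-2s)}$ enters exactly to make Lemma~\ref{L.6.2} applicable so that the slopes stay tame, and $t< 2^{(2s)^{-k}}$ enters to keep the last linear piece from overtaking the tail's matching point (i.e.\ to keep $l^k_k(t)$ from being too large relative to the scales at which the tail estimate is valid). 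So the bulk of the write-up is the bookkeeping verifying that \eqref{csharp} suffices in every region; that uniformity-in-$k$ check is where care is needed.
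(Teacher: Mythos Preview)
Your regional split and the idea of using Lemma~\ref{L.6.1} as a squeeze-by-tents device are right, but two of the key calculations in regions (b) and (c) are wrong, and a crucial structural step is missing.

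\textbf{Sign of $\partial_t\Phi^k$.} You say $\partial_t\Phi^k$ is \emph{negative} on the linear pieces and on the tail. It is not: the whole profile is non-increasing in $x$ and is being translated to the right (through $x\mapsto x-c_*t^{1/2s}-l^k_{n-1}(t)$, with both $c_*t^{1/2s}$ and $l^k_{n-1}$ increasing), so $\partial_t\Phi^k\ge 0$ there. In the paper's proof this positivity is the engine: on the $n$-th piece one gets $\Phi_t\ge 2^{-k+n-2}\theta_*(\beta_n^k)^{-1}c_*(2s)^{-1}t^{-1+(2s)^n}>0$, and on the tail one gets the lower bounds \eqref{6.6}--\eqref{6.7}. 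With your sign, the inequality $\Phi_t+(-\partial_{xx})^s\Phi\ge f(\Phi)$ cannot close, since $(-\partial_{xx})^s\Phi$ is typically negative and $f(\Phi)\ge 0$.

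\textbf{Wrong tent on the $n$-th piece.} Your claimed bound $(-\partial_{xx})^s\Phi^k(x)\ge -C_s(\theta^k_{n-1}-\theta^k_n)(l^k_n-l^k_{n-1})^{-2s}$ does not follow from Lemma~\ref{L.6.1}. That lemma's tent is always $1$ to the left of $A_1$; a tent that equals $\theta^k_{n-1}$ on $(-\infty,l^k_{n-1}]$ would \emph{not} dominate $\Phi^k$ there (since $\Phi^k>\theta^k_{n-1}$ to the left of $l^k_{n-1}$), so you cannot touch from below by a short tent over $[l^k_{n-1},l^k_n]$. The paper fixes this by first proving that $\Phi^k(t,\cdot)$ is \emph{convex} on $[c_*t^{1/2s},\infty)$ (this uses Lemma~\ref{L.6.2} to order the slopes, and a direct check at the linear/tail junction). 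Convexity is what lets you apply Lemma~\ref{L.6.1} with the \emph{long} window $A_1=c_*t^{1/2s}$, $A_2=x$, $\theta=\Phi^k(t,x)$, yielding $(-\partial_{xx})^s\Phi^k(t,x)\ge -C_s(x-c_*t^{1/2s})^{-2s}\ge -C_s(\beta^k_{n-1})^{-2s}t^{-1+(2s)^n}$, which then matches the positive $\Phi_t$ term via \eqref{6.5a}. The same convexity/long-tent device is used on the tail, giving \eqref{6.10}; your proposed ``window of width $\sim l^k_k(t)$ to the left of $x$'' is again the wrong comparison unless you first establish convexity back to $c_*t^{1/2s}$.

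\textbf{Role of the upper time bound.} The restriction $t\le 2^{(2s)^{-k}}$ is used not to keep $l^k_k(t)$ \emph{small}, but to keep it \emph{large enough}: $t^{(2s)^k}\le 2$ gives $l^k_k(t)\ge \gamma_1^{-1}t^{1/2s}$, which is what makes the tail estimate $(-\partial_{xx})^s\Phi\ge -C_s\gamma_1^{2s}t^{-1}$ balance against $\Phi_t\ge \gamma_0^{-1}c_*^{1/2}t^{-1}$ in the ``near'' part of the tail.

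In short: insert the convexity proof, correct the sign of $\Phi_t$ and use it as the gain term, and run Lemma~\ref{L.6.1} with $A_1=c_*t^{1/2s}$ rather than $A_1=l^k_{n-1}$.
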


\begin{proof}
We write $\Phi=\Phi^k$ for simplicity. Let $T_k:=2^{(2s)^{-k}}$ and fix any $t\in [2^\frac{k}{1-2s},T_k]$.
At any $x< c_*  t^\frac{1}{2s}$ we clearly have $\Phi_t=0\le (-\partial_{xx})^s\Phi$, and so $\Phi_t+(-\partial_{xx})^s\Phi-f(\Phi)\geq 0$ because  $f(1)=0$.  By Remark 1 after Definition \ref{D.2.0'}, it suffices to extend this claim to a.e.~$x> c_*  t^\frac{1}{2s}$.

Next we claim that $\Phi(t,\cdot)$ is convex on $[c_*  t^\frac{1}{2s},\infty)$.
The slope of $L_n^k(t,\cdot)$ for $n\in \bbN_{k}\setminus\{0\}$ is
\[
-\frac{\theta_{n-1}^k-\theta_n^k}{l^k_{n}(t)-l^k_{n-1}(t)}=-\frac{2^{-k+n-2}\theta_*}{\beta_n^k\,t^{\frac{1}{2s}-(2s)^n}},
\]
while for $n=0$ it is $-(1-\theta_0^k)t^{1-\frac 1{2s}}$.
Since $t\geq 2^\frac{k}{1-2s}$ and $t^{1-2s}\ge 1\ge \frac{\theta_*}{2(1-\theta_*)}$,  Lemma \ref{L.6.2} shows that $\Phi(t,\cdot)$ is convex on $[c_*  t^\frac{1}{2s},c_*  t^\frac{1}{2s}+l^k_{k}(t)]$.  
It is clearly also convex on $[c_*  t^\frac{1}{2s}+l^k_{k}(t),\infty)$, so we only need to check one-sided derivatives at  $y:=c_*  t^\frac{1}{2s}+l^k_{k}(t)$. 
We have
\beq\lb{6.2}
\lim_{x\to y+}\Phi_x(t,x)=-2s(\theta^k_k)^{1+\frac{1}{2s}}c_* ^{-\frac{1}{2}}t^{-\frac{1}{2s}}=-2s(2^{-1}\theta_*)^{1+\frac{1}{2s}}c_* ^{-\frac{1}{2}}t^{-\frac{1}{2s}},
\eeq
as well as (using also \eqref{6.5})
\[
\lim_{x\to y-}\Phi_x(t,x)=-\frac{\theta_{k-1}^k-\theta_k^k}{l^k_{k}(t)-l^k_{k-1}(t)}=-\frac{\theta_*}4 (\beta_k^k)^{-1}\,t^{-\frac{1}{2s}+(2s)^k} \leq -\frac{\theta_*}4 t^{-\frac{1}{2s}},
\]
which is no more than \eqref{6.2} because $s<\frac 12$ and so $c_* \geq 1\geq 16s^2(2^{-1}\theta_*)^\frac{1}{s}$. 
The claim follows.

Next consider any $x\in ( c_*  t^\frac{1}{2s},c_*  t^\frac{1}{2s}+l^k_0(t))$.
It follows from the definition of $\Phi$ that
\[
\Phi_t(t,x)=(1-\theta_0^k)(c_* + (1-2s)(2s)^{-1} xt^{-\frac{1}{2s}})\geq c_* (1-\theta_0^k).
\]
Lemma \ref{L.6.1} with $A_1:=c_*  t^\frac{1}{2s}$, $A_2:=c_*  t^\frac{1}{2s}+l^k_0(t)$, $\theta:=\theta^k_0$, 
together with $c_* \geq C_s+2{\|f\|_\infty}$, show that at such $x$ we have
\begin{align*}
\Phi_t+(-\partial_{xx})^s\Phi-f(\Phi)\geq c_* (1-\theta_0^k)-C_s(1-\theta_0^k) t^{-1+2s}-{\|f\|_\infty}\geq0 .
\end{align*}

Convexity of $\Phi(t,\cdot)$ on $[c_* t^{\frac12},\infty)$ shows that for any $n\in \bbN_k\setminus\{0\}$ and any $x\in ( c_*  t^\frac{1}{2s}+l^k_{n-1}(t),c_*  t^\frac{1}{2s}+l^k_{n}(t))$ we have
\[
\Phi(t,\cdot)\leq \psi_{c_*  t^{1/2s},x,\Phi(t,x)}
\]
on $\bbR$, and thus Lemma \ref{L.6.1} and the definition of $l^k_{n-1}(t)$  yield
\beq\lb{6.8}
(-\partial_{xx})^s \Phi(t,x)\geq -C_s(1-\theta_{n}^k)(x-c_*  t^\frac{1}{2s})^{-2s} 
\geq -C_s(\beta_{n-1}^k)^{-2s}\,t^{-1+(2s)^{n}}.
\eeq
Since $l^k_{n-1}(t)$ is increasing in $t\ge 1$ and $\beta_n^k\le 1$ by \eqref{6.5}, 
we obtain
\begin{align*}
\Phi_t(t,x)&=\frac{d}{dt}  L_n^k(t,x-c_* t^\frac{1}{2s})\\
&\geq 2^{-k+n-2}\theta_* (\beta_n^k)^{-1} \left[- t^{-\frac{1}{2s}+(2s)^n}\partial_t(-c_*  t^{\frac{1}{2s}}) - (x-c_*  t^\frac{1}{2s}-l^k_{n-1}(t)) \partial_t(t^{-\frac{1}{2s}+(2s)^n}) \right] \\
&\geq  2^{-k+n-2}\theta_*(\beta_n^k)^{-1} c_*  ({2s})^{-1} t^{-1+(2s)^n}.
\end{align*}
From this, \eqref{6.8}, \eqref{6.5a}, and $c_* \geq \frac{4sC_s}{\theta_*}$ (due to $c_* \ge (C_s\gamma_{0})^{\frac{1}{s}}$ and $C_s\ge 1$), we obtain $\Phi_t+(-\partial_{xx})^s\Phi \geq 0$  at the $x$ in question (notice that $f(\Phi(t,x))=0$ for all $x\ge c_*  t^\frac{1}{2s}+l^k_0(t)$).

Finally we need to consider any $x> c_*  t^\frac{1}{2s}+l^k_{k}(t)$, and this is the region where we will use that $t\leq T_k$.
Since $l^k_{k}(t)$ is increasing in $t\ge 1$, with $y_{t,x}:=x-c_*  t^\frac{1}{2s}-l^k_{k}(t)$ we have
\begin{align*}
    \Phi_t (t,x) &\geq \left[ (\theta_k^k)^{-\frac{1}{2s}}+c_* ^{-\frac{1}{2}}t^{-\frac{1}{2s}} y_{t,x}\right]^{-1-2s} \left(c_* ^{-\frac{1}{2}}t^{-\frac{1}{2s}-1}y_{t,x}+c_* ^{\frac{1}{2}}t^{-1} \right).
\end{align*}
Thus if $c_* ^{-\frac{1}{2}}t^{-\frac{1}{2s}} y_{t,x} \leq 1$, then $(\theta_k^k)^{-\frac{1}{2s}}+1\le 2 (\frac{\theta_*}2)^{-\frac{1}{2s}}$ and so by $s<\frac 12$ we have
\beq\lb{6.6}
\Phi_t (t,x) \geq  2^{-\frac{(1+2s)^2}{2s}}\theta_*^{\frac{1+2s}{2s}} c_* ^{\frac{1}{2}} t^{-1} \ge \gamma_0^{-1} c_* ^{\frac{1}{2}} t^{-1}.
\eeq
And if $c_* ^{-\frac{1}{2}}t^{-\frac{1}{2s}} y_{t,x}\geq 1$, we obtain
\beq\lb{6.7}
\Phi_t (t,x) \geq \left[ (\theta_k^k)^{-\frac{1}{2s}}+1\right]^{-1-2s} \left(c_* ^{-\frac{1}{2}}t^{-\frac{1}{2s}}y_{t,x} \right)^{-1-2s}c_* ^{-\frac{1}{2}} t^{-\frac{1}{2s}-1}y_{t,x}
\geq  \gamma_0^{-1} c_* ^{s} y_{t,x}^{-2s}.
\eeq

Since  $\Phi(t,\cdot)$ is convex on $[c_*  t^\frac{1}{2s},\infty)$, Lemma \ref{L.6.1} shows that
\beq\lb{6.10}
(-\partial_{xx})^s\Phi(t,x)\geq -C_s(x-c_* t^\frac{1}{2s})^{-2s}.
\eeq
Therefore, at all $x> c_*  t^\frac{1}{2s}+l^k_{k}(t)$ such that $c_* ^{-\frac{1}{2}}t^{-\frac{1}{2s}} y_{t,x} \ge 1$ we have
\[
\Phi_t+(-\partial_{xx})^s\Phi \ge (\gamma_0^{-1} c_* ^{s}-C_s) y_{t,x}^{-2s} \geq 0
\]
by \eqref{6.7} and \eqref{csharp}.  If instead $c_* ^{-\frac{1}{2}}t^{-\frac{1}{2s}} y_{t,x} \leq 1$, we note that $t\leq T_k$ implies $t^{(2s)^k}\leq 2$, so
\[
l^k_{k}(t)\geq \beta_k^k\,t^{\frac{1}{2s}-(2s)^k}\geq 2^{-\frac{1}{(1-2s)^2}-1}t^\frac{1}{2s}=\gamma_1^{-1}t^\frac{1}{2s}
\]
by  \eqref{6.4}. Hence \eqref{6.10} yields
\[
(-\partial_{xx})^s\Phi(t,x)\geq -C_s l^k_{k}(t)^{-2s}\geq -C_s \gamma_1^{2s}t^{-1},
\]
and then \eqref{6.6} and \eqref{csharp} again show that $\Phi_t+(-\partial_{xx})^s\Phi\geq 0$ at such $x$.
Therefore, $\Phi$ is indeed a supersolution to \eqref{1.1} on the time interval $(2^\frac{k}{1-2s},T_k)$. 
\end{proof}

We can now use the supersolutions from Theorem \ref{T.3.3'} to obtain an upper bound for general solutions to \eqref{1.1}.

\begin{theorem}\lb{T.3.3}
Let $f$ and $s$ satisfy (I), and let $0\le u\le 1$ solve \eqref{1.1}. If 
\[
u(0,x)\leq Ax^{-2s}
\]
 for some $A\geq 1$ and all $x>0$,
then for each $\lambda\in (0,1)$ there is $C_{\lambda,A}>0$ (depending also on $s,f$) such that 
for all $t\ge 0$ we have
\[
{\overline x}_\lambda(t;u)\leq C_{\lambda,A}(1+t)^\frac{1}{2s}.
\]
\end{theorem}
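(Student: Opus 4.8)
The plan is to deduce the estimate from the family of supersolutions $\Phi^k$ constructed in Theorem \ref{T.3.3'}, using the comparison principle (Theorem \ref{T.2.2}) on each of the (long, but finite) time intervals $(2^{k/(1-2s)}, 2^{(2s)^{-k}})$, and then chaining these intervals together to cover all of $[0,\infty)$. First I would fix $\lambda\in(0,1)$ and observe that since $\Phi^k(t,\cdot)$ has a tail of the form $[(\theta_k^k)^{-1/(2s)}+c_*^{-1/2}t^{-1/(2s)}(x-c_* t^{1/(2s)}-l_k^k(t))]^{-2s}$, its $\lambda$-level set right endpoint satisfies $\overline x_\lambda(t;\Phi^k)\le c_* t^{1/(2s)}+l_k^k(t)+C_\lambda' c_*^{1/2}t^{1/(2s)}$ for an explicit $C_\lambda'=C_\lambda'(\lambda,s)$ (solve $[\,\cdot\,]^{-2s}=\lambda$ for the offset $y$; note $l_k^k(t)\le t^{1/(2s)}$ by \eqref{6.5}), i.e.\ $\overline x_\lambda(t;\Phi^k)\le C_{\lambda,1}\, t^{1/(2s)}$ with a constant independent of $k$.

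The initial-data comparison is the next point: at the starting time $t_k:=2^{k/(1-2s)}$ of the $k$-th interval I want $u(t_k,\cdot)\le \Phi^k(t_k,\cdot)$ (up to a bounded spatial shift). By Theorem \ref{T.2.1}, $u$ is a classical solution and $0\le u\le 1$; the hypothesis $u(0,x)\le Ax^{-2s}$ together with the comparison principle against the (shifted, scaled) stationary-profile-type barrier — more simply, using that $\Phi^k(t_k,\cdot)$ is identically $1$ on $x\le c_* t_k^{1/(2s)}$ and then decays like a constant times $(x - c_* t_k^{1/(2s)})^{-2s}$ for $x$ beyond $c_* t_k^{1/(2s)}+l_k^k(t_k)$ — shows that a suitable shift $\Phi^k(t_k,\cdot+z_k)$ dominates $u(t_k,\cdot)$: on the region where $\Phi^k=1$ this is trivial, and on the decaying tail we need $A x^{-2s}\le C(x-c_* t_k^{1/(2s)}-z_k)^{-2s}$, which holds for all $x$ large once $z_k$ is chosen so that $c_*t_k^{1/(2s)}+z_k$ plus the constant from the tail profile is at least, say, $(2A/\text{(tail constant)})^{1/(2s)}$ — a shift of size $O(A^{1/(2s)})$, absorbed into the final constant. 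Actually it is cleaner to first propagate $u$ from $t=0$ to $t=t_k$ using only the crude a priori bound that $u\le$ the solution with initial data $\chi$ of a half-line (or better, directly use that the hypothesis is scale-critical: $Ax^{-2s}$ is comparable to the tail of $\Phi^0$ after a bounded shift), so that the whole construction only ever needs one comparison per dyadic-type block.

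Then, inductively: having shown $u(t,x)\le \Phi^k(t, x - z_k)$ on $[t_k, T_k]\times\bbR$ (so in particular $\overline x_\lambda(t;u)\le C_{\lambda,1} t^{1/(2s)}+z_k$ there), at time $t_{k+1}=2^{(k+1)/(1-2s)}$, which lies well inside $(t_k,T_k)$ since $2^{(2s)^{-k}}\gg 2^{(k+1)/(1-2s)}$ for all $k$, we read off a bound on $u(t_{k+1},\cdot)$ of exactly the same shape as the one we assumed at $t_k$ (a function that is $\le 1$ everywhere, $=1$ only on a half-line, and whose $\lambda$-level-set endpoints — for every $\lambda$ — grow like $t_{k+1}^{1/(2s)}$ up to a bounded shift), which feeds the comparison with $\Phi^{k+1}$ on the next interval; the accumulated shifts $z_k$ grow at most geometrically and remain $O(t_k^{1/(2s)})$, hence are absorbed. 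The small times $t\in[0,t_1]$ and, for each $k$, the sliver $t\in[T_{k-1}', t_k]$ (if one does not arrange the intervals to overlap) are handled trivially because there $\overline x_\lambda(t;u)$ is bounded by a constant depending only on $t_k$, i.e.\ by $C(1+t)^{1/(2s)}$ with room to spare. The main obstacle I expect is the bookkeeping in the inductive step — verifying that the constants $C_{\lambda,1}$ and the shifts $z_k$ do not degrade from one block to the next (this is where one uses that $C_{\lambda,1}$ in the level-set bound for $\Phi^k$ is $k$-independent, thanks to \eqref{6.4}–\eqref{6.5} controlling $\beta_k^k$ and $l_k^k$ uniformly in $k$), and checking that the successive time intervals genuinely overlap so that the comparison principle can be restarted; everything else is an application of Theorems \ref{T.2.2}, \ref{T.2.1}, and \ref{T.3.3'}.
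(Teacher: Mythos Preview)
Your plan is correct in spirit --- use the supersolutions $\Phi^k$ from Theorem~\ref{T.3.3'} together with Theorem~\ref{T.2.2} --- and your level-set estimate $\overline x_\lambda(t;\Phi^k)\le C_{\lambda,1}t^{1/(2s)}$ with a $k$-independent constant (via \eqref{6.5}) is exactly what is needed. But the inductive chaining you set up is unnecessary, and the bookkeeping worry you flag (shifts $z_k$ accumulating, constants degrading) disappears entirely with a simpler organization.

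The paper's argument avoids induction altogether. The key observation is that for each sufficiently large $k$ one can compare $u(0,\cdot)$ \emph{directly} with $\Phi^k(t_k,\cdot)$, where $t_k:=2^{k/(1-2s)}$: on $(-\infty,c_*t_k^{1/(2s)}]$ one has $\Phi^k(t_k,\cdot)\equiv 1$, and for $x>c_*t_k^{1/(2s)}$ the tail of $\Phi^k(t_k,\cdot)$ is bounded below by $(\,2^{1/(2s)}\theta_*^{-1/(2s)}+c_*^{-1/2}t_k^{-1/(2s)}x\,)^{-2s}$, which dominates $Ax^{-2s}$ as soon as $t_k\ge A\,(2^{1/(2s)}\theta_*^{-1/(2s)}c_*^{-1}+c_*^{-1/2})^{2s}$. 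So one first fixes $k_1$ large enough that this holds for all $k\ge k_1$; then, since the intervals $[t_k,\,2^{(2s)^{-k}}-t_k]$ cover $[t_{k_0},\infty)$ for a suitable $k_0$ (this is your overlap check), for any given $T\ge t_{k_1}$ one simply picks the $k\ge k_1$ with $t_k\le T\le 2^{(2s)^{-k}}-t_k$, applies comparison on $[0,T]$ to the time-shifted supersolution $\varphi(t,\cdot):=\Phi^k(t+t_k,\cdot)$, and reads off
\[
\overline x_\lambda(T;u)\le \overline x_\lambda(T+t_k;\Phi^k)\le C_{\lambda,1}(T+t_k)^{1/(2s)}\le 2^{1/(2s)}C_{\lambda,1}\,T^{1/(2s)},
\]
the last step because $t_k\le T$. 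Small times $T<t_{k_1}$ are handled trivially.

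In short: you never need to know anything about $u(t_k,\cdot)$ beyond the original hypothesis on $u(0,\cdot)$, so there is no inductive step and no shift to track. Your parenthetical remark (``actually it is cleaner to first propagate $u$ from $t=0$ to $t=t_k$\ldots'') was pointing in exactly the right direction; the paper just takes that idea to its conclusion.
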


{\it Remark.}  It is easy to see from this that one also has ${\overline x}_\lambda(t;u)\leq C_{\lambda}t^\frac{1}{2s}$ for $t\ge \tau_{\lambda, A}$, with some $\tau_{\lambda, A}=\tau_{\lambda, A}(s,f)$ but $C_{\lambda}=C_{\lambda}(s,f)$ independent of $A$.

\begin{proof}
Let $k_0$ be the smallest positive integer such that 
${(2s)^{-k}}\geq {\frac{k+1}{1-2s}+1}$ for all $k\ge k_0$. Then we have
\beq\lb{6.22}
\bigcup_{k\geq k_0}\left[2^{\frac{k}{1-2s}},2^{(2s)^{-k}}-2^{\frac{k}{1-2s}}\right]=[2^{\frac{k_0}{1-2s}},\infty).
\eeq
Next let $k_1\geq k_0$ be such that 
\[
2^{\frac{k_1}{1-2s}} \left( 2^{\frac{1}{2s}} \theta_*^{-\frac{1}{2s}} c_* ^{-1} +c_* ^{-\frac{1}{2}} \right)^{-2s}\ge A.
\] 
It follows from \eqref{6.22} that for any $T\geq 2^{\frac{k_1}{1-2s}}$, there is $k\geq k_1$ such that
\[
2^{\frac{k}{1-2s}}\leq T\leq 2^{(2s)^{-k}}-2^{\frac{k}{1-2s}}.
\]
Fix this $T$ and $k$, and let $\Phi^k$ be from \eqref{Phi}.
Then $\Phi^k(2^{\frac{k}{1-2s}},\cdot)\equiv 1$ on $(-\infty,c_* 2^{\frac{k}{(1-2s)2s}}]$, while $\Phi^k$ being non-increasing and $\theta_k^k=\frac{\theta_*}{2}$ show that for $x> c_* 2^{\frac{k}{(1-2s)2s}}$ we have
\[
\Phi^k(2^{\frac{k}{1-2s}},x)\geq \Phi^k(2^{\frac{k}{1-2s}},x+l_k^k(2^{\frac{k}{1-2s}}))\geq \left( 2^{\frac{1}{2s}} \theta_*^{-\frac{1}{2s}}+ c_* ^{-\frac{1}{2}} 2^{-\frac{k}{(1-2s)2s}} x \right)^{-2s}\geq  Ax^{-2s}.
\]
Therefore we have $\Phi^k(2^{\frac{k}{1-2s}},\cdot)\geq u(0,\cdot)$ on $\bbR$.  So if we let $\varphi(t,\cdot):=\Phi^k(t+2^{\frac{k}{1-2s}},\cdot)$, then $\varphi$ is a supersolution to \eqref{1.1} on the time interval $[0,T]$ with $\varphi(0,\cdot)\ge u(0,\cdot)$.  It now follows from the comparison principle
(Theorem \ref{T.2.2}; see also Remark 2 after Definition \ref{D.2.0'})
 that $u\le\phi$ on $[0,T]\times\bbR$.

Since $l_k^k(t)\leq 2t^\frac{1}{2s}$ by \eqref{6.5}, from \eqref{Phi} we obtain for any $\lambda\in (0,\frac{\theta_*}{2})$,
\[
{\overline x}_\lambda(t;\Phi^k)\leq c_* t^\frac{1}{2s}+l_k^k(t)+ \left( \lambda^{-\frac{1}{2s}}- ( \theta_k^k)^{-\frac{1}{2s}} \right) c_* ^\frac{1}{2}t^\frac{1}{2s}  \leq \left(c_* +2 + \sqrt{c_* } \lambda^{-\frac{1}{2s}} \right)t^\frac{1}{2s}
\]
for all $t\ge 1$.  This and $T\ge 2^{\frac{k}{1-2s}}$ show that for all $t\in[0,T]$ we have 
\[
{\overline x}_\lambda(T;u)\leq {\overline x}_\lambda(T+2^{\frac{k}{1-2s}};\Phi^k) \leq \left(c_* +2 + \sqrt{c_* } \lambda^{-\frac{1}{2s}} \right)  2^\frac{1}{2s} T^\frac{1}{2s}.
\]
Since $T\geq 2^{\frac{k_1}{1-2s}}$ was arbitrary, the result now clearly follows for any $\lambda\in(0,1)$, with $C_{\lambda,A}$ depending also on $s,K,\theta_0$ (since $c_* $ and $k_0$ depend on them).
\end{proof}

\section{Subsolutions for Ignition Reactions}\lb{S4}

In this section we prove the lower bound in Theorem \ref{T.1.3}(i), which then automatically  provides the same bound in Theorem \ref{T.1.1}(i) via the comparison principle.  We do so by constructing appropriate subsolutions to \eqref{1.0} in the following  counterpart to Theorem \ref{T.3.3}.  

\begin{theorem}\lb{T.4.3}
Let $f$ and $s$ satisfy (I), and let $0\le u\le 1$ solve \eqref{1.0}.  If
\[
u(0,\cdot) \ge \theta \chi_{B_{R_\theta}(0)}
\]
 for some $\theta>\theta_0$ and $R_\theta$ from Lemma \ref{L.2.4},
then for each $\lambda\in (0,1)$ there are $C_{\lambda}, \tau_{\lambda,\theta}>0$ (depending also on $s,f,d$) such that 
for all $t\ge \tau_{\lambda,\theta}$ we have
\[
{\underline x}_\lambda(t;u)\geq C_{\lambda}\,t^\frac{1}{2s}.
\]
\end{theorem}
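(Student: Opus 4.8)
The plan is to construct a family of compactly supported (or at least localized) subsolutions $\Phi^k$ to \eqref{1.0} whose $\lambda$-level sets reach distance $\sim t^{1/2s}$, mirroring the supersolution construction of Section \ref{S3} but with the inequalities reversed and with the extra flexibility that subsolutions are allowed to be ``flatter'' and need only stretch at the slow rate $O(t^{\frac1{2s}-2})$ (as announced in the introduction). First I would use Lemma \ref{L.2.4}: the hypothesis $u(0,\cdot)\ge\theta\chi_{B_{R_\theta}(0)}$ together with the comparison principle shows that the solution dominates the time-increasing solution starting from $\bar u_\theta$, and by the last sentence of Lemma \ref{L.2.4}, $u(t,\cdot)\to 1$ locally uniformly; in particular, for any fixed large $\rho$ there is a time $t_0$ after which $u(t_0,\cdot)\ge \theta'\chi_{B_\rho(0)}$ with $\theta'$ as close to $1$ as we like. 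This provides the ``seed'' that the self-similar subsolution will be fitted under at its initial time.

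Next I would build the self-similar profile. Let $\varphi:\bbR\to[0,1]$ be a fixed smooth non-increasing function with $\varphi\equiv 1$ near $0$, $\varphi(r)\sim r^{-2s}$ as $r\to\infty$ (so that it is comparable to the natural decay rate of the fractional heat kernel, matching Remark 2 after Theorem \ref{T.1.1}), and consider
\[
\Phi(t,x):=\varphi\!\left( c\, t^{-\frac1{2s}} |x| \right)
\]
for an appropriate small constant $c>0$, defined for $t\ge T_*$. A direct scaling computation gives $\Phi_t(t,x) = -\tfrac1{2s} t^{-1} (c t^{-1/2s}|x|)\,\varphi'(c t^{-1/2s}|x|)\ge 0$ (since $\varphi'\le 0$), while the scaling $(-\Delta)^s[\varphi(c t^{-1/2s}\cdot)](x) = (c t^{-1/2s})^{2s}\,[(-\Delta)^s\varphi](c t^{-1/2s}|x|) = c^{2s} t^{-1}\,[(-\Delta)^s\varphi](c t^{-1/2s}|x|)$ shows the diffusion term is $O(c^{2s}t^{-1})$ uniformly. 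Hence, writing $r:=ct^{-1/2s}|x|$,
\[
\Phi_t + (-\Delta)^s\Phi - f(\Phi) \;=\; t^{-1}\Big( -\tfrac1{2s} r\varphi'(r) + c^{2s}(-\Delta)^s\varphi(r) \Big) - f(\varphi(r)).
\]
The term $-\tfrac1{2s}r\varphi'(r)$ is nonnegative and is bounded below by a positive quantity on the transition region of $\varphi$; in the region where $\varphi>\theta_0$, I choose $\rho$ (hence the support of the seed) large enough and $c$ small enough that $t^{-1}$ times the bracket dominates $\|f\|_\infty$ for $t\ge T_*$; in the region where $\varphi\le\theta_0$ we have $f(\varphi)=0$ since $f$ is an ignition reaction, and there the bracket is automatically $\ge 0$ up to the negative $c^{2s}(-\Delta)^s\varphi$ contribution near the boundary $\varphi=\theta_0$ — which is again absorbed by choosing $c$ small. (Near $r\to\infty$ where $\varphi\sim r^{-2s}$, the contributions $-\tfrac1{2s}r\varphi'$ and $c^{2s}(-\Delta)^s\varphi$ both decay like the tail and one checks the leading coefficients match in sign with room to spare, exactly as in the supersolution estimate \eqref{6.7}–\eqref{6.10}; this is where one uses $\varphi\sim r^{-2s}$ and not a faster decay.) Thus $\Phi$ is a subsolution on $[T_*,\infty)\times\bbR^d$ for suitable $c,T_*$ depending on $s,f,d$.

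The final step is the comparison argument. Having fixed $c$ and $T_*$, choose $\rho$ and then (via Lemma \ref{L.2.4} as above) a time $t_0\ge T_*$ such that $u(t_0,\cdot)\ge \Phi(T_*,\cdot)$ on $\bbR^d$: this is possible because $\Phi(T_*,\cdot)$ is supported essentially in $B_{CT_*^{1/2s}/c}(0)$ with values $\le 1$, so it suffices that $u(t_0,\cdot)$ be close to $1$ on that fixed ball. Then $\psi(t,x):=\Phi(t+t_0,x)$ is a subsolution on $[0,\infty)\times\bbR^d$ with $\psi(0,\cdot)=\Phi(t_0,\cdot)\ge\Phi(T_*,\cdot)\le u(t_0,\cdot)$—wait, more carefully: $\Phi$ is time-increasing, so $\Phi(t_0,\cdot)\ge\Phi(T_*,\cdot)$, hence I instead set $\psi(t,x):=\Phi(t+T_*,x)$ and only need $u(t_0,\cdot)\ge\Phi(T_*,\cdot)$, applying comparison on $[t_0,\infty)$ after shifting time. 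Theorem \ref{T.2.2} gives $u(t_0+t,\cdot)\ge\Phi(T_*+t,\cdot)$ for all $t\ge0$. Since ${\underline x}_\lambda(\tau;\Phi)=\tfrac1c\varphi^{-1}(\lambda)\,\tau^{1/2s}$, this yields ${\underline x}_\lambda(t;u)\ge C_\lambda t^{1/2s}$ for all $t\ge\tau_{\lambda,\theta}$, with $C_\lambda$ depending only on $s,f,d$ (through $c$ and $\varphi$) and $\tau_{\lambda,\theta}$ absorbing the waiting time $t_0$, which depends on $\theta$.

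The main obstacle I expect is the tail estimate: verifying that the slowly decaying profile $\varphi(r)\sim r^{-2s}$ together with its fractional Laplacian genuinely produces a subsolution in the far field for all $t\ge T_*$ simultaneously, uniformly in $t$. This requires a quantitative lower bound on $(-\Delta)^s\varphi$ (or rather control of its sign and size) in the regime $r\to\infty$, analogous to Lemma \ref{L.6.1} but now needing the $\ge$ direction to go the right way for a subsolution; unlike the supersolution case, here one cannot exploit convexity to get a clean one-sided bound, so one must compute (or carefully estimate) $(-\Delta)^s[r^{-2s}]$ and show the error terms from the smooth cutoff near $r=0$ do not spoil the inequality. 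In $d\ge 2$ this is further complicated by the radial geometry (the kernel $c_{s,d}|x-y|^{-d-2s}$ integrated against a radial profile), which is why, as the introduction notes, one is forced to work with a localized but not compactly supported subsolution.
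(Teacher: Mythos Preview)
There is a genuine gap in your far-field analysis, and the paper's approach is in fact much simpler than what you attempt.

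\textbf{The tail problem.} With the profile $\varphi(r)\sim Ar^{-2s}$ you propose, the self-similar function $\Phi(t,x)=\varphi(ct^{-1/2s}|x|)$ cannot be a subsolution for all large $t$. In the region $\varphi\le\theta_0$ where $f(\Phi)=0$, you need $-\tfrac{1}{2s}r\varphi'(r)+c^{2s}[(-\Delta)^s\tilde\varphi](r)\le 0$. But $-\tfrac{1}{2s}r\varphi'(r)\sim Ar^{-2s}$, whereas for any radial $\tilde\varphi$ equal to a bounded plateau near the origin and to $A|x|^{-2s}$ near infinity one has
\[
(-\Delta)^s\tilde\varphi(x)\;=\;c_{s,d}A\,C_d\,|x|^{-4s}+O(|x|^{-d-2s}),\qquad
C_d:=\text{p.v.}\!\int_{\bbR^d}\frac{1-|z|^{-2s}}{|e-z|^{d+2s}}\,dz,
\]
by scaling. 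The Riesz-potential identity $(-\Delta)^s|x|^{-2s}=c'|x|^{-4s}$ with $c'>0$ (valid for $d>4s$, and by analytic continuation otherwise) shows $C_d>0$, so $(-\Delta)^s\tilde\varphi$ is eventually \emph{positive} and in any case of size $r^{-4s}\ll r^{-2s}$. Hence no choice of $c$ can make the bracket nonpositive for all large $r$: your claimed ``leading coefficients match in sign with room to spare'' is false. The remark from the introduction about non-compactly-supported subsolutions refers to the \emph{monostable} case (Section~\ref{S5}), not the ignition case.

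\textbf{What the paper does.} The paper takes as profile the \emph{compactly supported} function $\bar u_\theta$ from Lemma~\ref{L.2.4}, which already satisfies the strict stationary inequality $(-\Delta)^s\bar u_\theta-f(\bar u_\theta)\le -\eps$ on its support. Setting $\Psi(t,x):=\bar u_\theta(bt^{-1/2s}x)$, outside the (rescaled) support $\Psi\equiv 0$ and the subsolution inequality is trivial; on the support one has $\Psi_t\le \tfrac{aL}{2s}t^{-1}$, $(-\Delta)^s\Psi=b^{2s}t^{-1}(-\Delta)^s\bar u_\theta$, and, crucially using $f\ge 0$ and $b^{2s}t^{-1}\le 1$, one may throw away part of the reaction to write $-f(\Psi)\le -b^{2s}t^{-1}f(\bar u_\theta)$. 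This recombines to $(\tfrac{aL}{2s}-b^{2s}\eps)t^{-1}\le 0$ for the right $b$. Two lines, no tail analysis. The remaining comparison and bootstrap (for $\lambda\ge\theta$) are as you outline.

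\textbf{Minor issues.} Your plateau value $\varphi\equiv 1$ makes $f(\Phi)=0$ there while $(-\Delta)^s\Phi>0$, so the inequality already fails at $r=0$; one must use a height $<1$ (the paper's $\bar u_\theta$ has maximum $\theta$). Also, in the region $\varphi>\theta_0$ you want $t^{-1}\cdot[\text{bracket}]\le f(\varphi)$, not the reverse; this is arranged by taking $T_*$ large, not by making $c$ small.
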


\begin{proof}
The comparison principle and Lemma \ref{L.2.4} show that it suffices to prove the result with $C_\lambda$ also depending on $\theta$, which we will do.

Let $u_\theta, \bar u_\theta$ be  from Lemma \ref{L.2.4}, and let $L:=\|u_\theta'\|_\infty<\infty$ and $\supp \,u_\theta=(-\infty,a]$ (so $a\in(0,R_\theta]$). By \eqref{2.18'x}, there is $\eps>0$ such that for all $x\le a$ we have
\beq\lb{4.3'}
(-\Delta)^s \bar u_\theta(x)-f(\bar u_\theta(x))\leq-\eps.
\eeq

Next let
\[
b:= ((2s\eps)^{-1} L a)^{\frac{1}{2s}} \qquad\text{and}\qquad \Psi(t,x):= \bar u_\theta(b t^{-\frac{1}{2s}}x).
\]
Since $\Psi(t,\cdot)\ge 0$ vanishes on $B_{b^{-1}a t^{1/2s}} (0)$ for each $t>0$,
we have
\[
\Psi_t (t,\cdot)+  (-\Delta)^s\Psi(t,\cdot)-f(\Psi(t,\cdot))\leq 0
\]
there.
From \eqref{4.3'} and $f\ge 0$ we obtain for any $t\geq b^{2s}$ and $|x|\leq b^{-1}a t^\frac{1}{2s}$,
\begin{align*}
\Psi_t (t,x)+   & (-\Delta)^s\Psi(t,x)-f(\Psi(t,x))\\
&\leq (2s)^{-1} b t^{-\frac{1}{2s}-1} |x| \, \|u_\theta'\|_\infty + b^{2s}t^{-1} \left[  (-\Delta)^s \bar u_\theta (bt^{-\frac{1}{2s}}x) - f \left( \bar u_\theta(bt^{-\frac{1}{2s}}x) \right) \right]\\
&\leq
\left( (2s)^{-1} L a -b^{2s}\eps \right) t^{-1},
\end{align*}
which is $\leq 0$ by the definition of $b$.
Hence $\Psi$ is a subsolution to \eqref{1.0} on  time interval $(b^{2s},\infty)$.

Since clearly
$
u(0,\cdot)\geq \bar u_\theta=\Psi(b^{2s},\cdot),
$
the comparison principle (Theorem \ref{T.2.2}) yields
\[
u(t,\cdot)\geq \Psi(t+b^{2s},\cdot)
\]
for all $t\geq 0$.
Hence for each $\lambda\in(0,\theta)$ and $t\ge 0$ we have 
\beq\lb{2.21}
{\underline x}_\lambda(t;u)\geq {\underline x}_\lambda(t;\Psi(\cdot+b^{2s},\cdot))= C_{\lambda,\theta} b^{-1}(t+b^{2s})^\frac{1}{2s},
\eeq
where $C_{\lambda,\theta}\in(0,R_\theta)$ is such that $u_\theta(C_{\lambda,\theta})=\lambda$.
The claim now follows for each $\lambda\in (0,\theta)$.

Moreover, it follows from \eqref{2.21} that there are $\tau',C'>0$ such that for all $t\geq\tau'$ we have
\[
\inf_{|x|\leq C't^{1/2s}}u(t,x)\geq \frac{\theta+\theta_0}{2}
\]
Then the last claim in Lemma \ref{L.2.4} shows that for any $\lambda\in [ \theta,1)$ there is $\tau>0$ such that
\[
\inf_{|x|\leq C't^{1/2s}} u(t+\tau,x)\geq \lambda
\]
for all $t\geq\tau'$.
 It follows that for all $t\ge \tau+\tau'$  we have
\[
{\underline x}_\lambda(t;u)\geq C'(t-\tau)^\frac{1}{2s}
\]
(with $C',\tau',\tau$ depending on $s,f,d,\lambda,\theta$), which proves the claim for $\lambda\in [ \theta,1)$.
\end{proof}


\section{Subsolutions for Monostable Reactions} \lb{S5}

In this section we prove Theorems \ref{T.1.1}(ii) and \ref{T.1.3}(ii), so we will assume the following.

\smallskip

\begin{itemize}
    \item[ (M)] Let $f$ be an $\alpha$-monostable reaction for some $\alpha>1$, and let $s\in(0,\frac{\alpha}{2(\alpha-1)})$.
\end{itemize}
\smallskip

  The relevant upper bound on ${\overline x}_\lambda(t;u)$ for front-like data was already obtained in \cite{coville2020propagation} (for $\alpha$-monostable reactions $\tilde f$ that are also $C^1$ and have $\tilde f'(1)<0$, but there always exists such $\tilde f\ge f$, so the same bound for $f$ follows by the comparison principle).  This immediately provides the bounds in Theorems \ref{T.1.1}(ii) and \ref{T.1.3}(ii) via the comparison principle. 
 
 Hence it remains to show that for any $\lambda\in(0,1)$ and all large $t$ we have
\beq\lb{7.11}
{\underline x}_\lambda(t;u)\geq C t^{\frac{\alpha}{2s(\alpha-1)}}.
\eeq
It suffices to do this in the setting of Theorem \ref{T.1.3}(ii), because then the same bound in Theorem~\ref{T.1.1}(ii) follows via the comparison principle. We will do this by constructing appropriate subsolutions to \eqref{1.0}.

Let $\alpha,\theta_0,\gamma$ be from Definition  \ref{D.1.1}(ii) for $f$, and let 
\[
\beta:=(d+2s)(\alpha-1) \qquad\text{and}\qquad \kappa:=
\frac{\beta\alpha}{2s(\alpha-1)} \quad (> \beta)
\]
(with the inequality due to (M)). Fix any $\theta\in (0,1)$, let
\[
\theta_1:=\min\left\{\theta_0, \frac \theta 2 \right\} \qquad\text{and}\qquad \theta_2:=\frac {1+\theta}{2}.
\]
Then let
\beq\lb{8.5tau}
\nu:=\alpha-1 \qquad\text{and}\qquad \tau:=\min\left\{ \tau_0,\,\frac{c_* }{2^{(d+2s)/\beta}C_*\theta_1}\right\},
\eeq
where $\tau_0,c_* , C_*$  are from Lemma \ref{L.8.1} below with $\beta,\nu,\theta_1$ as above 
(so $\tau_0,c_* , C_*$ are independent from $a,b$ in the lemma). 
Next let 
\[
\delta:=\inf_{u\in [\tau\theta_1,\theta]} f(u)>0,
\]
define
\beq\lb{8.5'}
a_3:=\min\left\{ 1,\,\frac{(\alpha-1) \gamma}{2\kappa-1},\,\frac{(\alpha-1) \delta }{4\kappa\theta_2^\alpha}\right\},
\eeq
and then
\beq\lb{8.5}
 a_1:=\left( \frac{(\alpha-1)c_* }{2^{1+(d+2s)/\beta}(2\kappa-1)}\right)^{\frac{\beta}{2s}}{a_3^\frac{d}{2s}}
 \qquad\text{and}\qquad a_2:=a_1a_3.
\eeq

Now let $\phi_\theta:[0,1]\to [0,\theta]$ be smooth and such that
\beq\lb{7.13}
\phi_\theta(y)=y \, \text{ on } [0,\theta_1]\qquad\text{and}\qquad \phi_\theta(y)=\theta \, \text{ on }[\theta_2,1],
\eeq
as well as for some $C_\theta>0$ we have
\beq\lb{7.12}
0\le \phi_\theta'\le 1 \qquad\text{and}\qquad -C_\theta \le \phi_\theta''\le 0.
\eeq
Finally define 
\beq\lb{8.0}
\psi_\theta(t,r):=
   ( a_1^{-1} t^{1-\kappa}(r^{\beta}- a_2t^\kappa))^{-\frac{1}{\alpha-1}}.
\eeq
for $ t>0$ and $r> ( a_2t^\kappa)^{\frac 1\beta}$, and then let
\[
\Psi_\theta(t,x):=
\begin{cases}
    \phi_\theta\left(\psi_\theta(t,|x|)\right)&|x|\geq ( \theta_2^{1-\alpha}a_1t^{\kappa-1}+ a_2t^\kappa)^{\frac 1\beta},\\
    \theta &\text{otherwise. } 
\end{cases}
\]
For any $u\in (0,\theta_2]$ let also
\[
X_t(u):=(u^{1-\alpha}a_1 t^{\kappa-1}+ a_2t^\kappa)^\frac{1}{\beta}
\]
 (hence $\psi_\theta(t,X_t(u))=u$).
This construction shows that $\Psi_\theta$ is a smooth function. We will now show that it is also a subsolution to \eqref{1.0} at all large times.  We note that since we may have $s>\frac 12$ here, this would be difficult if graphs of $\Psi_\theta(t,\cdot)$ (as functions of $x$) had ``concave'' corners, which is the reason for the introduction of the function $\phi_\theta$ above.

We start with a technical lemma, whose proof is easy when $d=1$, but somewhat more involved when $d>1$.  We  postpone the proof to the appendix.

\begin{lemma}\lb{L.8.1}
Let $\theta_1\in (0,1]$ and $\beta>\nu>0$  be such that 
$\frac{\beta}{\nu}\geq d-2$.
Let $0<a\le1\le b$, let $X(u):=( a^{-1}(u^{-{\nu}}+b))^{\frac 1\beta}$ for $u>0$,   and let $\varphi:\bbR^d\to [0,1]$ be smooth and such that
\[
\varphi(x)=   ( a |x|^{\beta}- b)^{-\frac{1}{\nu}}\qquad\text{ when $|x|\geq X(\theta_1)$}
\]
as well as $\varphi(x)\geq \theta_1$ when $|x|\leq X(\theta_1)$.
Then for any $s\in(0,1)$, there are $c_*,C_*>0$ and $\tau_0\in (0,\frac 14]$ (depending only on $s,\beta,\nu,\theta_1,d$) such that for all $|x|\geq X(\tau_0\theta_1)$ we have
\[
(-\Delta)^s\varphi(x)\leq -c_* X(\theta_1)^d |x|^{-d-2s}+ C_*|x|^{-2s} \varphi(x) .
\]
\end{lemma}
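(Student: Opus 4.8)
\textbf{Proof plan for Lemma \ref{L.8.1}.}

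\emph{Reduction to $a=1$.} Setting $\tilde\varphi(x):=\varphi(a^{-1/\beta}x)$ one has $(-\Delta)^s\varphi(z)=a^{2s/\beta}\bigl((-\Delta)^s\tilde\varphi\bigr)(a^{1/\beta}z)$, and $\tilde\varphi$ satisfies the hypotheses with $a$ replaced by $1$ and $X(u)$ replaced by $a^{1/\beta}X(u)$; a direct substitution shows the claimed inequality for $\tilde\varphi$ is equivalent to the one for $\varphi$, with the \emph{same} $c_*,C_*,\tau_0$. So assume $a=1$. Fix $b\ge1$, put $g(r):=(r^\beta-b)^{-1/\nu}$ (so $\varphi=g(|\cdot|)$ on $\{|x|\ge\rho\}$ with $\rho:=X(\theta_1)=(\theta_1^{-\nu}+b)^{1/\beta}$, $g$ smooth, positive and decreasing there), and set $R:=|x|$, $m:=\varphi(x)=(R^\beta-b)^{-1/\nu}$. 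Since $R\ge X(\tau_0\theta_1)$ we have $m\le\tau_0\theta_1\le\theta_1/4$, and from $(\rho/R)^\beta=\frac{\theta_1^{-\nu}+b}{m^{-\nu}+b}$ one sees that $R-\rho$ is, up to constants depending only on $\beta,\nu$, comparable to $\tfrac{R}{1+bm^\nu}$.

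\emph{Decomposition.} Choose $r_0:=\min\bigl\{c_\beta(R-\rho),\ \tfrac{m^{-\nu}}{2C_\beta R^{\beta-1}}\bigr\}$ with $c_\beta=c_\beta(\beta,\nu)$ and $C_\beta=C_\beta(\beta,\nu)$ so small/large that $B_{r_0}(x)\subset\{|y|>\rho\}$ and $|y|^\beta-b\ge\tfrac12(R^\beta-b)$ on $B_{r_0}(x)$. With $\delta^2\varphi(x,w):=\varphi(x+w)+\varphi(x-w)-2\varphi(x)$, write $(-\Delta)^s\varphi(x)=c_{s,d}(J_0+J_1+J_2+J_3)$, where $J_0,J_2,J_3$ are the integrals of $\frac{\varphi(x)-\varphi(y)}{|x-y|^{d+2s}}$ over $B_\rho$, over $\{\rho<|y|<R\}\setminus B_{r_0}(x)$, and over $\{|y|>R\}\setminus B_{r_0}(x)$, and $J_1=-\tfrac12\int_{B_{r_0}}\frac{\delta^2\varphi(x,w)}{|w|^{d+2s}}\,dw$. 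Then: for $J_0$, since $\varphi\ge\theta_1$ on $B_\rho$ and $m\le\theta_1/4$ one has $\varphi(x)-\varphi(y)\le-\tfrac34\theta_1$ there, so $J_0\le-\tfrac34\theta_1\int_{B_\rho}|x-y|^{-d-2s}dy\le-c_1\rho^dR^{-d-2s}$, and in addition $J_0\le-c_1(R-\rho)^{-2s}$ when $R-\rho\le\tfrac1{10}\rho$ — the latter by a spherical‑cap lower bound on $\int_{B_\rho}|x-y|^{-d-2s}dy$, which is the one place $d>1$ needs care. For $J_1$, Taylor‑expand $\delta^2\varphi(x,w)=\langle D^2\varphi(x)w,w\rangle+O\!\bigl(\sup_{B_{r_0}(x)}\|D^3\varphi\|\,|w|^3\bigr)$; as $D^2\varphi(x)$ has eigenvalues $g''(R)$ and $g'(R)/R$, the quadratic term integrates to a dimensional multiple of $r_0^{2-2s}\bigl(g''(R)-(d-1)\tfrac{|g'(R)|}{R}\bigr)$, which is $\ge0$ because $\tfrac{g''(r)r}{|g'(r)|}=\bigl(1+\tfrac1\nu\bigr)\beta\tfrac{r^\beta}{r^\beta-b}-(\beta-1)\ge\tfrac\beta\nu+1\ge d-1$ — here the hypothesis $\beta/\nu\ge d-2$ is used — so its contribution to $J_1$ is $\le0$; using $\|D^2\varphi\|\lesssim\tfrac{m}{R^2}(1+bm^\nu)^2$ and $\|D^3\varphi\|\lesssim\tfrac{m}{R^3}(1+bm^\nu)^3$ on $B_{r_0}(x)$ together with the choice of $r_0$, the cubic remainder is $\lesssim mR^{-2s}(1+bm^\nu)^{2s}$. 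For $J_2$, $\varphi$ is radially decreasing on $\{|y|\ge\rho\}$, so $\varphi(y)>\varphi(x)$ there and $J_2\le0$. For $J_3$, $0<\varphi(x)-\varphi(y)<m$ and $\int_{|y-x|\ge r_0}|x-y|^{-d-2s}dy=C_{s,d}r_0^{-2s}$, so $J_3\le C_{s,d}m\,r_0^{-2s}\lesssim mR^{-2s}(1+bm^\nu)^{2s}$.

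\emph{Conclusion and the hard point.} If $R-\rho\ge\tfrac1{10}\rho$ then $1+bm^\nu$ is bounded by a constant depending on $\beta$, hence $J_1+J_3\le C_2\varphi(x)|x|^{-2s}$ and $J_0\le-c_1X(\theta_1)^d|x|^{-d-2s}$, which is the lemma. If $R-\rho<\tfrac1{10}\rho$, then $(R-\rho)^{-2s}\ge c(\beta,\nu)R^{-2s}(1+bm^\nu)^{2s}\ge c(\beta,\nu)X(\theta_1)^d|x|^{-d-2s}$, while $J_1+J_3\le C_2 mR^{-2s}(1+bm^\nu)^{2s}\le C_2\tau_0\theta_1R^{-2s}(1+bm^\nu)^{2s}$; choosing $\tau_0=\tau_0(s,\beta,\nu,\theta_1,d)\le\tfrac14$ so small that $C_2\tau_0\theta_1\le\tfrac12 c_1c(\beta,\nu)$, the core term absorbs $J_1+J_3$ and we get $(-\Delta)^s\varphi(x)\le-\tfrac12 c_1c(\beta,\nu)c_{s,d}(R-\rho)^{-2s}\le-c_*X(\theta_1)^d|x|^{-d-2s}$. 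In both cases $c_*,C_*$ depend only on $s,\beta,\nu,\theta_1,d$. The delicate regime is $b$ (equivalently $X(\theta_1)$) large, where $\rho\approx R$ are both large and $R-\rho\ll\rho$, so $x$ sits very close — relative to $|x|$ — to the transition sphere $\partial B_{X(\theta_1)}$: then $J_1,J_3$ carry the large factor $(1+bm^\nu)^{2s}$ and must be defeated by the equally strong negativity $J_0\asymp-(R-\rho)^{-2s}$ of the core, which is what forces the precise choices of $r_0$ and $\tau_0$; this balance, together with the spherical‑cap estimate of $\int_{B_\rho}|x-y|^{-d-2s}dy$ and the identification of the Hessian eigenvalues $g'',g'/r$, is what makes the case $d>1$ more involved than $d=1$.
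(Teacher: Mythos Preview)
Your argument is correct and takes a genuinely different route from the paper's.  The paper also splits the integral into a core piece over $B_{X(\theta_1)}$ (your $J_0$), a tail (its $I_4$, starting at radius $|x|$ rather than $r_0$), and two remaining pieces, but the key distinction is in how the hypothesis $\beta/\nu\ge d-2$ is exploited: the paper observes that it makes $\varphi$ \emph{subharmonic} on $\{|y|>X(\theta_1)\}$ and then invokes the spherical mean-value inequality to conclude that the entire principal-value integral over the large ball $B_{|x|-X(\theta_1)}(x)$ (its $I_1$) is $\le0$ with no error term whatsoever.  You instead Taylor-expand on a small ball $B_{r_0}(x)$ and use $\Delta\varphi(x)\ge0$ only pointwise, which produces a cubic remainder that you then have to track and eventually absorb.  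The paper's gain is that it never touches third derivatives; its cost is that the leftover annular piece $I_2$ is genuinely hard---it is handled by a change of variables and a two-case split (close to vs.\ far from the transition sphere, analogous to yours), and in the close case $\tau_0$ must be chosen small so that $I_2\le0$.  Your decomposition makes the annular contributions essentially free ($J_2\le0$ by radial monotonicity, $J_3\le m\,r_0^{-2s}$ trivially), pushing all the effort into sizing $r_0$ and absorbing the $J_1,J_3$ errors into $J_0$ via $\tau_0$.  Both arguments hinge on the spherical-cap lower bound for $\int_{B_\rho}|x-y|^{-d-2s}\,dy$ when $x$ is near $\partial B_\rho$, which is indeed the step that distinguishes $d>1$ from $d=1$.
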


We are now ready to construct a localized subsolution for \eqref{1.0} with monostable $f$.

\begin{theorem}\lb{T.sub}
Let $f$ and $s$ satisfy (M).
Then for any $\theta\in (0,1)$, there is $T_\theta\geq 1$ such that
$\Psi_\theta $ above is a subsolution to \eqref{1.0} on the time interval $[T_\theta,\infty)$.
\end{theorem}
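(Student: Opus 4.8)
The plan is to verify the pointwise differential inequality $\partial_t\Psi_\theta+(-\Delta)^s\Psi_\theta-f(\Psi_\theta)\le 0$ for all $x\in\bbR^d$ and all $t\ge T_\theta$; since $\Psi_\theta$ is smooth and, for each fixed $t$, bounded in $C^2(\bbR^d)$ with power decay at spatial infinity, this is equivalent to $\Psi_\theta$ being a distributional subsolution in the sense of Definition~\ref{D.2.0'}. Two identities drive everything. Differentiating \eqref{8.0} one gets, wherever $\psi_\theta=\psi_\theta(t,|x|)$ is defined,
\[
\partial_t\psi_\theta=\frac{\kappa-1}{\alpha-1}\,t^{-1}\psi_\theta+\frac{\kappa a_3}{\alpha-1}\,\psi_\theta^{\alpha},
\qquad
|x|^{\beta}=a_1t^{\kappa-1}\psi_\theta^{-(\alpha-1)}\bigl(1+a_3t\,\psi_\theta^{\alpha-1}\bigr),
\]
while $X_t(u):=(a_1t^{\kappa-1}u^{-(\alpha-1)}+a_2t^{\kappa})^{1/\beta}$ inverts $r\mapsto\psi_\theta(t,r)$ and is decreasing in $u$. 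I record that (M) gives $\kappa/\beta=\tfrac{\alpha}{2s(\alpha-1)}>1$, $\kappa>1$, and $\beta/(\alpha-1)=d+2s\ge d-2$, so Lemma~\ref{L.8.1} is applicable with $\nu=\alpha-1$. For fixed large $t$ I would split $\bbR^d$ into the \emph{bulk} $\{|x|\le X_t(\tau\theta_1)\}$, where $\tau\theta_1\le\Psi_\theta\le\theta$ and hence $f(\Psi_\theta)\ge\delta$, and the \emph{tail} $\{|x|\ge X_t(\tau\theta_1)\}$, where $\Psi_\theta=\psi_\theta\le\tau\theta_1\le\theta_0$ (using $\phi_\theta(y)=y$ on $[0,\theta_1]$) and hence $f(\Psi_\theta)\ge\gamma\psi_\theta^{\alpha}$.

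On the bulk the key point is that $(-\Delta)^s\Psi_\theta(t,x)$ is uniformly small for large $t$: splitting the singular integral at radius $\rho:=t^{\kappa/\beta-1}$, the far part is $\le C\theta\,\rho^{-2s}$ by $0\le\Psi_\theta\le\theta$, and the near part is $\le C\rho^{2-2s}\|\nabla^2\Psi_\theta(t,\cdot)\|_{L^\infty(B_\rho(x))}$, where the explicit formula for $\psi_\theta$ gives $|\partial_r\psi_\theta|\le Ct^{1-\kappa/\beta}$ and $|\partial_{rr}\psi_\theta|\le Ct^{2-2\kappa/\beta}$ on the relevant annulus (there $\psi_\theta$ stays between a fixed positive constant and $\theta_2$), so together with $|\phi_\theta'|\le1$, $|\phi_\theta''|\le C_\theta$ one obtains $\|\nabla^2\Psi_\theta(t,\cdot)\|_{L^\infty}\le Ct^{-2(\kappa/\beta-1)}$ there and both parts are $O(t^{-2s(\kappa/\beta-1)})\to0$. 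Since also $\partial_t\Psi_\theta\le\phi_\theta'(\psi_\theta)\,\partial_t\psi_\theta\le\tfrac{(\kappa-1)\theta_2}{\alpha-1}t^{-1}+\tfrac14\delta$ (the second term using $a_3\le\tfrac{(\alpha-1)\delta}{4\kappa\theta_2^{\alpha}}$ from \eqref{8.5'}) and $f(\Psi_\theta)\ge\delta$, the inequality holds on the bulk once $t\ge T_\theta$.

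On the tail I would apply Lemma~\ref{L.8.1} with $a:=a_1^{-1}t^{1-\kappa}\le1$ and $b:=a_3t\ge1$ (valid for $t\ge T_\theta$; note the tail lies in $\{|x|\ge X_t(\tau_0\theta_1)\}$ since $\tau\le\tau_0$), giving $(-\Delta)^s\Psi_\theta\le-c_*X_t(\theta_1)^d|x|^{-d-2s}+C_*|x|^{-2s}\psi_\theta$. Inserting this and $f(\psi_\theta)\ge\gamma\psi_\theta^{\alpha}$, and putting $\gamma':=\gamma-\tfrac{\kappa a_3}{\alpha-1}\ge\tfrac{(\kappa-1)\gamma}{2\kappa-1}>0$ (by $a_3\le\tfrac{(\alpha-1)\gamma}{2\kappa-1}$), it reduces to showing
\[
\frac{\kappa-1}{\alpha-1}\,t^{-1}\psi_\theta+C_*|x|^{-2s}\psi_\theta\ \le\ \gamma'\psi_\theta^{\alpha}+c_*X_t(\theta_1)^d|x|^{-d-2s}.
\]
I would split further into the \emph{far tail} $\{a_3t\,\psi_\theta^{\alpha-1}\le1\}$, where $|x|^{\beta}\asymp a_1t^{\kappa-1}\psi_\theta^{-(\alpha-1)}$, and the \emph{near tail}, where $|x|^{\beta}\asymp a_2t^{\kappa}$. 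In the near tail one has $\tfrac{\kappa-1}{\alpha-1}t^{-1}\psi_\theta\le\tfrac{(\kappa-1)a_3}{\alpha-1}\psi_\theta^{\alpha}\le\gamma'\psi_\theta^{\alpha}$, and $C_*|x|^{-2s}\psi_\theta\le c_*X_t(\theta_1)^d|x|^{-d-2s}$ follows from $|x|^d\le(2a_2t^{\kappa})^{d/\beta}$, $\psi_\theta\le\tau\theta_1$, $X_t(\theta_1)^d\ge(a_2t^{\kappa})^{d/\beta}$ and $\tau\le c_*/(2^{(d+2s)/\beta}C_*\theta_1)\le c_*/(2^{d/\beta}C_*\theta_1)$ from \eqref{8.5tau}. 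In the far tail I would dominate each left-hand term by $\tfrac12c_*X_t(\theta_1)^d|x|^{-d-2s}$: the $C_*|x|^{-2s}\psi_\theta$ term once $t\ge T_\theta$, and the $t^{-1}\psi_\theta$ term by observing that the powers of $t$ cancel exactly ($\beta=(d+2s)(\alpha-1)$ and $\kappa=\tfrac{(d+2s)\alpha}{2s}$ give $\tfrac{\kappa d}{\beta}-\tfrac{\kappa-1}{\alpha-1}+1=0$) and that the remaining $t$-independent inequality is exactly what the choice of $a_1$ in \eqref{8.5} secures (with a factor of $2$ to spare from $2\kappa-1\ge2(\kappa-1)$). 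Taking $T_\theta$ large enough for the finitely many ``$t\ge\,$threshold'' requirements above finishes the argument.

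The step I expect to be hardest is the tail estimate: once Lemma~\ref{L.8.1} is used there is essentially no slack, so the two error contributions -- the $t^{-1}\psi_\theta$ term coming from $\partial_t\Psi_\theta$ and the $C_*|x|^{-2s}\psi_\theta$ term from the lemma -- must be absorbed by $\gamma'\psi_\theta^{\alpha}$ and $-c_*X_t(\theta_1)^d|x|^{-d-2s}$, which is precisely why $a_1,a_2,a_3,\tau$ are calibrated as in \eqref{8.5tau}--\eqref{8.5} and why an exact cancellation of powers of $t$ must occur in the far tail. A secondary difficulty is the bulk Hessian bound, namely controlling $(-\Delta)^s\Psi_\theta$ uniformly up to the edge of the plateau $\{|x|=X_t(\theta_2)\}$: this is the reason for smoothing the profile by $\phi_\theta$ and for tracking the decay rates of $\partial_r\psi_\theta$ and $\partial_{rr}\psi_\theta$.
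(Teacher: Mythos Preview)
Your proposal is correct and follows essentially the same strategy as the paper's proof: the same bulk/tail decomposition at $|x|=X_t(\tau\theta_1)$, the same application of Lemma~\ref{L.8.1} in the tail, and the same further split of the tail according to whether $a_3t\,\psi_\theta^{\alpha-1}$ is above or below $1$. The differences are purely organizational. In the bulk, the paper exploits the convexity $\psi_{rr}>0$ to obtain only a one-sided lower bound $D^2_{ee}\Psi\ge -(C+1)\rho_t$ with $\rho_t\sim t^{1-\kappa/\beta}$ (so it never needs to bound $\psi_{rr}$ from above), whereas you compute the full two-sided Hessian bound $\|\nabla^2\Psi_\theta\|_\infty\le Ct^{-2(\kappa/\beta-1)}$; both give $(-\Delta)^s\Psi_\theta=o(1)$. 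In the tail, you absorb the time-derivative contribution $\frac{\kappa a_3}{\alpha-1}\psi^\alpha$ into $f$ first and then balance the remaining two error terms against $c_*X_t(\theta_1)^d|x|^{-d-2s}$ and $\gamma'\psi^\alpha$ according to the near/far split, while the paper groups the terms slightly differently (in the near tail it shows $(-\Delta)^s\Psi\le 0$ outright and then $\Psi_t\le f(\Psi)$); the resulting constraints on $a_1,a_2,a_3,\tau$ are identical, and your verification of the exact cancellation $\frac{\kappa d}{\beta}-\frac{\kappa-1}{\alpha-1}+1=0$ matches the paper's computation.
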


\begin{proof}
We drop $\theta$ from $\phi_\theta,\psi_\theta,\Psi_\theta,T_\theta,C_\theta$ in the proof. 
The desired $T$ we obtain here will depend on the various constants in the above setup, and
 we will always assume that $t\geq T \ge 1$. 

Let us start with estimating $| \psi_r|$ from above.
Since $\psi_{rr}>0>\psi_r$,
we see that for all $r\ge X_t(\theta_2)$ we have 

\[
    |\psi_r(t,r)|\leq -\psi_{r}(t,X_t(\theta_2))=\frac{\beta}{\alpha-1}  a_1^{-1}t^{1-\kappa} \theta_2^\alpha X_t(\theta_2)^{\beta-1}
\]
When $\beta<1$, from $\theta_2<1$ and $ a_1\ge  a_2$ we obtain 
\[    
    -\psi_{r}(t,X_t(\theta_2)) \leq \frac{\beta}{\alpha-1}  a_1^{-1}t^{1-\kappa} ( a_2t^\kappa)^\frac{\beta-1}{\beta} 
    \leq  \frac{\beta }{\alpha-1}  a_2^{-\frac{1}{\beta}}t^{\frac{\beta -\kappa}{\beta}}.
\]
When $\beta\ge 1$, we again use  $\theta_2<1$  to get
\[
\theta_2^\alpha  X_t(\theta_2)^{\beta-1} = \theta_2^{\frac{\alpha-1+\beta}{\beta}} ( a_1t^{\kappa-1}+ \theta_2^{\alpha-1}  a_2t^\kappa)^\frac{\beta-1}{\beta} \le ( a_1t^{\kappa-1}+ a_2t^\kappa)^\frac{\beta-1}{\beta}
\le ( a_1t^{\kappa-1})^\frac{\beta-1}{\beta} + ( a_2t^\kappa)^\frac{\beta-1}{\beta},
\]
so then $ a_2\leq  a_1$  and $t\geq 1$ yield
\[
-\psi_{r}(t,X_t(\theta_2))
\leq \frac{\beta}{\alpha-1}\left( a_1^{-\frac{1}{\beta}}t^{\frac{1-\kappa}{\beta}}+ a_1^{-1} a_2^\frac{\beta-1}{\beta}t^{\frac{\beta-\kappa}{\beta}}\right)
\leq  \frac{2\beta }{\alpha-1}   a_2^{-\frac{1}{\beta}}t^{\frac{\beta-\kappa}{\beta}}.
\]

It follows that if $T$ is such that $\frac{2\beta}{\alpha-1}   a_2^{-\frac1{\beta}}T^{\frac{\beta-\kappa}{\beta}}\leq 1$  (recall that $\kappa>\beta$) and $X_T(\theta_2)\geq 1$, then \eqref{7.12} and $\psi_{rr}>0$ yield  for any $e\in\bbS^{d-1}$ and
 $\rho_t:=\frac{2\beta}{\alpha-1}   a_2^{-\frac1{\beta}}t^{\frac{\beta-\kappa}{\beta}}\leq 1$, 
\[
\begin{aligned}
D_{ee}^2\Psi(t,x)&=\frac{(x\cdot e)^2}{|x|^2}\phi'' \big(\psi(t,|x|) \big)|\psi_r(t,|x|)|^2+\frac{(x\cdot e)^2}{|x|^2}\phi' \big(\psi(t,|x|) \big)\psi_{rr}(t,|x|)\\
&\qquad +\phi' \big(\psi(t,|x|) \big)\psi_r(t,|x|) \frac{|x|^2-(x\cdot  e)^2}{|x|^3} \geq-C \rho_t^2-\rho_t\geq -(C+1)\rho_t,
\end{aligned}
\]
where we also used that $\phi' (\psi(t,|x|) ) =0$ when $|x|\le 1$ (due to $X_t(\theta_2)\geq 1$).
From this, $\theta<1$, and $s\in(0,1)$ we obtain (with $\omega_d$ be the surface area of $\bbS^{d-1}$)
\beq\lb{8.1}
\begin{aligned}
  \sup_{x\in\bbR^d}\,  (-\Delta)^s\Psi(t,x)
  &\leq  \frac{c_{s,d}}{2}\int_{|h|\leq \rho_t^{-1/2}} \frac{2\Psi(t,x)-\Psi(t,x+h)-\Psi(t,x-h)}{|h|^{d+2s}}dh\\
  &\qquad\qquad+ c_{s,d} \int_{|h|>\rho_t^{-1/2}} \frac{\Psi(t,x)-\Psi(t,x+h)}{|h|^{d+2s}} dh \\
  &\leq  \frac{c_{s,d}}{2}\int_{|h|\leq \rho_t^{-1/2}} \frac{(C+1) \rho_t}{|h|^{d-2+2s}}dh + c_{s,d} \int_{|h|>\rho_t^{-1/2}} \frac\theta{|h|^{d+2s}} dh\\
  &\leq \frac{c_{s,d}\omega_d (C+1)\rho_t^{s}}{2s(1-s)}=:C'\rho_t^s.
\end{aligned}
\eeq

We will need another estimate for $(t,x)$ such that $\Psi(t,x)$ is small.  When $\Psi(t,x)\leq \tau\theta_1$ (then $\Psi(t,x)=\psi(t,x)$ because $\tau\leq \tau_0\le 1$), we can apply Lemma \ref{L.8.1} with 
$\varphi(\cdot)= \Psi(t,\cdot)$ and $\beta,\nu,\theta_1$  as above,
provided $T$ is large enough so that $a:= a_1^{-1}t^{1-\kappa}<1$ and $b:=a_2a_1^{-1}t\ge 1$.  
So when $\Psi(t,x)\leq \tau\theta_1$, then we have
\beq\lb{8.3}
(-\Delta)^s \Psi(t,x)\leq -c_* |x|^{-d-2s}X_t(\theta_1)^d+ C_* |x|^{-2s} \psi(t,x).
\eeq

Finally, we also note that $\Psi_t(t,x)=0$ when $|x|\leq X_t(\theta_2)$, while for $|x|>X_t(\theta_2)$ we have
\beq\lb{8.2}
\begin{aligned}
    \Psi_t(t,x)&= \phi'(\psi(t,x))\frac{\psi(t,x)^\alpha  a_1^{-1}}{\alpha-1} \left((\kappa-1)t^{-\kappa}|x|^{\beta}+  a_2\right)\\
    &\leq \frac{\psi(t,x)^\alpha  a_1^{-1}}{\alpha-1} \left((\kappa-1)  a_1t^{-1} \psi(t,x)^{1-\alpha}+\kappa  a_2 \right) \\
    & = \frac{\kappa-1}{\alpha-1} \, \frac{\psi(t,x)}t+\frac{ a_2 \kappa }{ a_1(\alpha-1)} \, \psi(t,x)^{\alpha} .
\end{aligned}
\eeq

We are now ready to show that $\Psi_t+(-\Delta)^s\Psi-f(\Psi)\leq 0$ at all $(t,x)$ with $t\ge T$ (if $T$ is large enough). When $|x|\leq X_t(\theta_2)$, then \eqref{8.1} and  $f(\Psi(t,x))=f(\theta)\ge\delta$  show that this follows from $C' \rho_t^s\leq {\delta}$,
which holds if $T$ is large.
Since  \eqref{8.5} and \eqref{8.5'} also yield
\[
\max \left\{\frac{\kappa-1}{\alpha-1} \frac{\theta_2}{t} ,\, \frac{ a_2\kappa }{ a_1(\alpha-1)} \theta_2^\alpha ,\, C' \rho_t^s \right\}\le\frac\delta 3
\]
if $T$ is large,
and since for $|x|\in (X_t(\theta_2), X_t(\tau\theta_1)]$ we still have $f(\Psi(t,x))\ge\delta$, it follows from \eqref{8.1} and \eqref{8.2}  that for these $x$ we again have
\[
\Psi_t(t,x)+ (-\Delta)^s\Psi (t,x) -f(\Psi(t,x))\leq \frac{\kappa-1}{\alpha-1}\frac{\theta_2}{t} + \frac{ a_2\kappa }{ a_1(\alpha-1)} \theta_2^\alpha +C' \rho_t^{s} 
-\delta\leq 0.
\]

It therefore remains to consider $|x|>X_t(\tau\theta_1)$ (when $\Psi(t,x)= \psi(t,x)$ because $\tau\le 1$). 
If also $t \Psi(t,x)^{\alpha-1}\leq \frac { a_1}{ a_2}$, 
it follows from \eqref{8.3}, \eqref{8.2}, $|x|=(\psi(t,x)^{1-\alpha} a_1t^{\kappa-1}+ a_2t^\kappa)^{-\frac{1}{\beta}}$, and $X_t(\theta_1)\geq (a_2t^\kappa)^\frac{1}{\beta}$ that (we drop $(t,x)$ from the notation for simplicity)
\begin{align*}
    \Psi_t+ (-\Delta)^s\Psi-f(\Psi)&\leq   \frac{\kappa-1}{\alpha-1}\frac\psi t+\frac{ a_2\kappa }{ a_1(\alpha-1)} \psi^{\alpha}-c_*  |x|^{-d-2s}X_t(\theta_1)^d +C_* |x|^{-2s} \psi\\
    &\leq   \frac{2\kappa-1}{\alpha-1}\frac\psi t-c_* (2\psi^{1-\alpha} a_1t^{\kappa-1})^{-\frac{d+2s}{\beta}}(a_2t^\kappa)^\frac{d}{\beta} +C_*(\psi^{1-\alpha} a_1t^{\kappa-1})^{-\frac{2s}{\beta}}  \psi \\
    &= \frac{2\kappa-1}{\alpha-1} \frac\psi t-c_* (2 a_1)^{-\frac{d+2s}{\beta}}a_2^\frac{d}{\beta}\frac{\psi}{t}+C_* a_1^{-\frac{2s}{\beta}} t^{-\frac{(\kappa-1)2s}{\beta}} \psi^{1+\frac{(\alpha-1)2s}{\beta}}.
\end{align*}
Using again $t\psi^{\alpha-1}\leq \frac{a_1}{a_2}$, we obtain
\[
t^{-\frac{(\kappa-1)2s}{\beta}}  \psi^{1+\frac{(\alpha-1)2s}{\beta}}
= t^{\frac{2s}{\beta}}  \psi^{\frac{(\alpha-1)2s}{\beta}} t^{-\frac\alpha{\alpha-1}} \psi
\leq (a_1a_2^{-1})^\frac{2s}{\beta} t^{-\frac{1}{\alpha-1}}\frac{\psi}{t}.
\]
Therefore
$\Psi_t+ (-\Delta)^s\Psi-f(\Psi)\leq 0$ by \eqref{8.5} if $T$ is large enough.

When instead $|x|>X_t(\tau\theta_1)$ and $t \Psi(t,x)^{\alpha-1}> \frac { a_1}{ a_2}$, then 
\[
(a_2t^\kappa)^\frac{1}{\beta} \le |x|=(\psi(t,x)^{1-\alpha} a_1t^{\kappa-1}+ a_2t^\kappa)^{-\frac{1}{\beta}} \le  (2a_2t^\kappa)^\frac{1}{\beta},
\]
so \eqref{8.3}, $X_t(\theta_1)\geq (a_2t^\kappa)^\frac{1}{\beta}$, $\psi(t,x) \leq \tau\theta_1$ (due to $|x|>X_t(\tau\theta_1)$), and  \eqref{8.5tau} show that
\begin{align*}
(-\Delta)^s\Psi \leq -c_*   (2a_2t^\kappa)^{-\frac{d+2s}{\beta}}(a_2t^\kappa)^\frac{d}{\beta}+ C_* (a_2t^\kappa)^{-\frac{2s}{\beta}} \psi 
\leq  \left( C_* \tau \theta_1 -c_*  2^{-\frac{d+2s}{\beta}} \right) a_2^{-\frac{2s}{\beta}}t^{-\frac{2s\kappa}{\beta}} \le 0.
\end{align*}
This, \eqref{8.2}, $\frac{\psi(t,x)}{t}<\frac{a_2}{a_1}\psi(t,x)^\alpha$,  and  (M) show that
\begin{align*}
    \Psi_t+ (-\Delta)^s\Psi-f(\Psi)&\leq
\frac{ a_2(2\kappa-1)}{ a_1(\alpha-1)} \psi^{\alpha}-\gamma \psi^\alpha
\end{align*}
which is again $\le 0$ due to \eqref{8.5'}.
This finishes the proof.
\end{proof}

We can now use the constructed subsolutions to prove \eqref{7.11}.

\begin{theorem}\lb{T.7.1}
Let $f$ and $s$ satisfy (M), and let $0\le u\le 1$ solve \eqref{1.0}. If
\[
u(0,\cdot) \ge \theta \chi_{B_{R_\theta}(0)}
\]
 for some $\theta> 0$ and $R_\theta$ from Lemma \ref{L.2.4},
then for each $\lambda\in (0,1)$ there are $C_{\lambda},\tau_{\lambda,\theta}>0$ (depending also on $ s,f,d$) such that for all $t\ge \tau_{\lambda,\theta}$ we have
\[
{\underline x}_\lambda(t;u)\geq C_{\lambda}t^{\frac{\alpha}{2s(\alpha-1)}}.
\]
\end{theorem}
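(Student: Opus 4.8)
The plan is to combine the subsolution constructed in Theorem~\ref{T.sub} with the comparison principle, handling first the values $\lambda<\theta$ directly and then bootstrapping to all $\lambda\in(0,1)$ via the hair-trigger convergence in Lemma~\ref{L.2.4}. First I would observe that by the Remark after Lemma~\ref{L.2.4}, for each $\theta>0$ we may pick $\theta'\in(0,\theta)$ and an ignition reaction $g\le f$ whose ignition temperature is below $\theta'$, so it suffices (after possibly shrinking $\theta$) to treat the monostable case itself. Fix $\theta\in(0,1)$ and let $\Psi_\theta$ and $T_\theta$ be from Theorem~\ref{T.sub}, so $\Psi_\theta$ is a subsolution to \eqref{1.0} on $[T_\theta,\infty)$. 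By construction $\Psi_\theta(t,\cdot)\le\theta$ everywhere, and $\Psi_\theta(t,x)=\theta$ for $|x|\le X_t(\theta_2)$; in particular, at the fixed time $T_\theta$ we have $\Psi_\theta(T_\theta,\cdot)\le\theta\chi_{\bbR^d}$, and since $\Psi_\theta(T_\theta,x)\to 0$ as $|x|\to\infty$, it is bounded above by $\theta\chi_{B_{R}(0)}+(\text{small tail})$. The cleanest route is to first apply the result for $\theta$ replaced by a slightly smaller value: since $u(0,\cdot)\ge\theta\chi_{B_{R_\theta}(0)}$ and solutions are continuous, Lemma~\ref{L.2.4} (hair-trigger) already gives that $\inf_{|x|\le \rho}u(t_0,x)$ is arbitrarily close to $1$ for some $t_0$ and some $\rho$; choosing $t_0$ large enough that $u(t_0,\cdot)\ge\theta\chi_{B_{R'}(0)}$ with $R'\ge X_{T_\theta}(\theta_2)$ ensures $u(t_0,\cdot)\ge\Psi_\theta(T_\theta,\cdot)$ on the ball where $\Psi_\theta(T_\theta,\cdot)=\theta$, and the tail of $\Psi_\theta(T_\theta,\cdot)$ can be absorbed by enlarging $t_0$ (so $u(t_0,\cdot)$ dominates it pointwise).

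Once $u(t_0,\cdot)\ge\Psi_\theta(T_\theta,\cdot)$ on all of $\bbR^d$ is established, the comparison principle (Theorem~\ref{T.2.2}) gives
\[
u(t_0+t,\cdot)\ge\Psi_\theta(T_\theta+t,\cdot)\qquad\text{for all }t\ge 0.
\]
Now for any $\lambda\in(0,\theta)$, the level set $\{\Psi_\theta(T_\theta+t,\cdot)\ge\lambda\}$ contains the ball $B_{X_{T_\theta+t}(\lambda)}(0)$, where $X_t(\lambda)=(\lambda^{1-\alpha}a_1t^{\kappa-1}+a_2t^\kappa)^{1/\beta}\ge (a_2)^{1/\beta}t^{\kappa/\beta}=(a_2)^{1/\beta}t^{\alpha/(2s(\alpha-1))}$, using $\kappa/\beta=\alpha/(2s(\alpha-1))$. (Here one must stay in the regime $\psi_\theta\le\theta_2$, i.e.\ $\lambda<\theta\le\theta_2$, so that $\phi_\theta$ acts as the identity-after-clipping and the level is genuinely attained at radius $X_t(\lambda)$ — that is exactly why $\lambda<\theta$ is assumed at this stage.) Therefore ${\underline x}_\lambda(t_0+t;u)\ge X_{T_\theta+t}(\lambda)\ge (a_2)^{1/\beta}(T_\theta+t)^{\alpha/(2s(\alpha-1))}\ge C_\lambda(t_0+t)^{\alpha/(2s(\alpha-1))}$ for $t$ large, which is the claim for $\lambda\in(0,\theta)$ with $\tau_{\lambda,\theta}$ absorbing $t_0,T_\theta$ and $C_\lambda$ depending also on $\theta$; as in Theorem~\ref{T.4.3}, one then argues that the $\theta$-dependence of $C_\lambda$ can be removed (it suffices to have the result for one fixed $\theta$, say $\theta=\tfrac12$, since larger $\theta$ only helps and smaller $\theta$ is covered by comparison with the ignition reaction $g\le f$).

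It remains to upgrade from $\lambda\in(0,\theta)$ to $\lambda\in[\theta,1)$. This is done exactly as in the proof of Theorem~\ref{T.4.3}: from the bound just obtained (applied with, say, the intermediate level $\tfrac{\theta+\theta_0}{2}$ or any fixed value below $\theta$), there are $\tau',C'>0$ with $\inf_{|x|\le C't^{\kappa/\beta}}u(t,x)\ge\tfrac{1+\theta_0}{2}$ (or any value exceeding the ignition temperature of the subordinate ignition reaction $g$) for all $t\ge\tau'$. Then the last claim of Lemma~\ref{L.2.4} — applied to the subordinate ignition reaction, so that $u$ is a supersolution for $g$ and the hair-trigger convergence $u\to1$ locally uniformly holds — yields, for any $\lambda\in[\theta,1)$, some $\tau>0$ with $\inf_{|x|\le C't^{\kappa/\beta}}u(t+\tau,x)\ge\lambda$ for all $t\ge\tau'$, hence ${\underline x}_\lambda(t;u)\ge C'(t-\tau)^{\kappa/\beta}\ge C_\lambda t^{\alpha/(2s(\alpha-1))}$ for $t\ge\tau+\tau'$. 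Combined with the previous paragraph this proves \eqref{7.11} for all $\lambda\in(0,1)$.

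The main obstacle I anticipate is the bookkeeping needed to arrange $u(t_0,\cdot)\ge\Psi_\theta(T_\theta,\cdot)$ on all of $\bbR^d$ — i.e.\ getting the initial datum of the solution to dominate the initial datum of the subsolution, including the slowly-decaying tail $\Psi_\theta(T_\theta,x)\sim(\text{const})|x|^{-(d+2s)/(\alpha-1)}$. One resolves this by first using the ballistic lower bound \eqref{1.66} (or the last claim of Lemma~\ref{L.2.4}) to push the solution up: after a large enough time $t_0$, $u(t_0,\cdot)\ge\theta$ on a huge ball $B_\rho(0)$, and on $|x|\ge\rho$ one uses that $u\ge 0$ together with the fact that $\Psi_\theta(T_\theta,\cdot)$ can be made to have an arbitrarily small tail by choosing $T_\theta$ (hence the region where $\Psi_\theta(T_\theta,\cdot)=\theta$, of radius $X_{T_\theta}(\theta_2)$) suitably — but since $T_\theta$ is fixed by Theorem~\ref{T.sub}, the honest fix is instead to replace $\Psi_\theta(T_\theta,\cdot)$ by its truncation to $B_\rho(0)$, note that this truncation is $\le\theta\chi_{B_\rho(0)}\le u(t_0,\cdot)$, and separately observe that truncating a subsolution to a ball and setting it to $0$ outside keeps it a subsolution (the fractional Laplacian of the truncation at exterior points is $\le$ that of the original, being more negative, and at interior points the truncation lies below the original so again $(-\Delta)^s$ decreases), after which comparison applies. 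The geometry of the $\kappa/\beta$ exponent and the identity $\kappa/\beta=\alpha/(2s(\alpha-1))$ are then the only computational points, and they are immediate from the definitions of $\beta,\kappa$.
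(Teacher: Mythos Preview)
Your overall architecture is right --- apply Theorem~\ref{T.sub} via comparison once you have arranged $u(t_0,\cdot)\ge\Psi_\theta(T_\theta,\cdot)$, then bootstrap from $\lambda<\theta$ to all $\lambda\in(0,1)$ exactly as in Theorem~\ref{T.4.3}. The level-set computation $X_t(\lambda)\ge(a_2)^{1/\beta}t^{\kappa/\beta}$ and the identity $\kappa/\beta=\alpha/(2s(\alpha-1))$ are also correct.

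The gap is in the step you yourself flagged as the ``main obstacle'': arranging domination of the \emph{tail} of $\Psi_\theta(T_\theta,\cdot)$. Your proposed fix --- truncate $\Psi_\theta$ to a ball and claim the truncation is still a subsolution --- is false for the fractional Laplacian. At an interior point $x\in B_\rho$ you have $\tilde\Psi(x)=\Psi_\theta(x)$ while $\tilde\Psi\le\Psi_\theta$ globally, hence
\[
(-\Delta)^s\tilde\Psi(x)=c_{s,d}\,\text{p.v.}\!\int\frac{\Psi_\theta(x)-\tilde\Psi(y)}{|x-y|^{d+2s}}\,dy
\;\ge\;c_{s,d}\,\text{p.v.}\!\int\frac{\Psi_\theta(x)-\Psi_\theta(y)}{|x-y|^{d+2s}}\,dy
=(-\Delta)^s\Psi_\theta(x),
\]
so $(-\Delta)^s$ \emph{increases}, not decreases, and the subsolution inequality can fail at interior points. (In addition the truncated function is discontinuous, so it does not even fit Definition~\ref{D.2.0'}.) The earlier suggestion of simply ``enlarging $t_0$'' does not work either: the last claim of Lemma~\ref{L.2.4} gives only \emph{locally} uniform convergence $u(t,\cdot)\to 1$, which says nothing about $u(t_0,x)$ for $|x|\to\infty$.

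What the paper does, and what you are missing, is a quantitative \emph{tail lower bound} on the solution itself. After reducing (via Lemma~\ref{L.2.4}) to a time-increasing solution with $u(0,\cdot)\ge\theta\chi_{B_1(0)}$, one compares $u$ with the free fractional heat evolution of $\theta\chi_{B_1(0)}$ to get
\[
u(t,x)\;\ge\; c\,t\,|x|^{-(d+2s)} \qquad\text{for }t\ge 1,\ |x|\ge t^{1/(2s)}+1,
\]
using the $|x|^{-(d+2s)}$ decay of the $2s$-stable heat kernel. The point is that $\Psi_\theta(T_\theta,x)$ has \emph{exactly the same} spatial decay $|x|^{-\beta/(\alpha-1)}=|x|^{-(d+2s)}$ (with coefficient $a_1^{1/(\alpha-1)}T_\theta^{(\kappa-1)/(\alpha-1)}$), so choosing a fixed time $T$ with $cT\ge a_1^{1/(\alpha-1)}T_\theta^{(\kappa-1)/(\alpha-1)}$ gives $u(T,x)\ge\Psi_\theta(T_\theta,x)$ for all large $|x|$; time-monotonicity then propagates this to all later times, and the remaining compact region is handled by the hair-trigger claim in Lemma~\ref{L.2.4}. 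This tail-matching via the heat kernel is the idea you need to replace the incorrect truncation argument.
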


\begin{proof}
The comparison principle and Lemma \ref{L.2.4} show that it suffices to prove the result with $C_\lambda$ also depending on $\theta$, which we will do.

Let $\bar u_\theta$ be from Lemma \ref{L.2.4} (see the remark after that lemma), let $\bar u$ be the solution to \eqref{1.0} with initial data $\bar u_\theta$, and let $t_0$ be such that $\bar u(t_0,\cdot)\ge \theta \chi_{B_1(0)}$.  Since $u_0\ge \bar u_\theta$, we have $u(t_0,\cdot)\ge\bar u(t_0,\cdot)$ by the  comparison principle (Theorem \ref{T.2.2}), and then comparison principle shows that it suffices to consider $u_0=\bar u(t_0,\cdot)$ without loss.  The proof of Lemma \ref{L.2.4} now shows that $u$ is time increasing.

Similarly to \cite[Theorem 3.1]{coville2020propagation}, since $u$ dominates the solution to $v_t+(-\partial_{xx})^s v=0$ with initial data $\theta \chi_{B_{1}(0)}$, there is $C>0$ (depending only on $s,d$) such that if $t\ge 1 $ and $|x|\ge t^\frac{1}{2s}+1$, then
\begin{align*}
    u(t,x)\geq C\theta\int_{B_{1}(x)} t^{-\frac{d}{2s}}(1+|t^{-\frac{1}{2s}}y|^{d+2s})^{-1}dy
    \geq \frac{C\theta \omega_d}{2d}\, t(|x|+1)^{-d-2s} \ge c\, t|x|^{-d-2s},
\end{align*}
where $\frac{\omega_d}d$ is the volume of $B_1(0)$ and $c:= 2^{-d-2s-1}d^{-1}C\theta \omega_d$.

Now let $\Psi_\theta,T_\theta$ be from Theorem \ref{T.sub} and let ${T}:=1+ c^{-1}a_1^{\frac 1{\alpha-1}}T_\theta^\frac{\kappa-1}{\alpha-1}$.  If $|x|$ is  large enough, we then have ${u}({T},x)\geq\Psi_\theta(T_\theta,x)$, which then yields ${u}({t},x)\geq\Psi_\theta(T_\theta,x)$ for all these $x$ and all $t\ge T$ because $u$ increases in time.  But we also have ${u}({t},x)\geq\Psi_\theta(T_\theta,x)$ for all the other $x$ and  some $t$ by the last claim in Lemma \ref{L.2.4}. 
Hence there is ${T}'\geq {T}$ such that ${u}({T}',\cdot)\geq \Psi_\theta(T_\theta,\cdot)$.  Comparison principle now yields
\[
{u}(t+{T'},\cdot)\geq \Psi_\theta(t+T_\theta,\cdot)
\]
for all $t\ge 0$,
 so for any $\lambda\in (0,\theta)$ and $t\geq {T'}$ we have
\[
{\underline x}_\lambda(t;u)
\geq {\underline x}_\lambda(t-{T'}+T_\theta;\Psi_\theta)\geq C_{\lambda,\theta} (t-{T'}+T_\theta)^{\frac{\alpha}{2s(\alpha-1)}}
\]
for some  time-independent $C_{\lambda,\theta}>0$.  This proves the claim for each $\lambda\in (0,\theta)$, and for $\lambda\in [\theta,1)$ it now follows as at the end of the proof of Theorem \ref{T.4.3}.
\end{proof}

\appendix

\section{Proof of Lemma \ref{L.2.4}}

We will first show that it suffices to obtain existence of $R_\theta,u_\theta$ satisfying \eqref{2.20} and \eqref{2.18'} (note that \eqref{2.18''} then follows from $u_\theta=0$ on $(R_\theta,\infty)$).
Let us assume this is the case, and for any $R\geq 0$ and all $x\in\bbR$ let $v_R(x):= u_\theta(|x|-2R)$.
Then $v_R\equiv 0$ on $B_{2R+R_\theta}(0)^c$, so on this set we have  $-(-\Delta)^s v_R\geq 0$.
Since $v_R=u_\theta(|\cdot|-2R)$ on $B_R(x)$ when $|x|\in[R,2R+R_\theta]$ (so as $R\to\infty$, uniformly in these $x$ we have local uniform (in $y$) convergence of $v_R(y+x)$ to $u_\theta(y\cdot \frac x{|x|} +|x| -2R)$ in $C^2$), and $v_R\equiv \theta$ on $B_R(x)$ when $|x|\le R$, the strict inequality in \eqref{2.18'} and $f(\theta)>0$ guarantee that for any large enough $R$ we have
\[
\inf_{|x|\le 2R+R_\theta} \left[-(-\Delta)^s v_R(x)+f(v_R(x)) \right] > 0.
\]
(Recall also that $c_{s,d}=c_{s,1}(\int_{\bbR^{d-1}} (1+h^2)^{-\frac d2 -s} dh)^{-1}$.)
By symmetry this also holds with $|x|\le R_\theta+R$ under the $\inf$, so 
\eqref{2.18''x} and  \eqref{2.18'x} hold for $\bar u_\theta:=v_R$ when $R$ is large enough (and we then replace $R_\theta,u_\theta$ by $2R+R_\theta,u_\theta(\cdot-2R)$). 
%

Hence to prove the first claim, it remains to  find $R_\theta,u_\theta$ satisfying \eqref{2.20} and \eqref{2.18'}.
Let us now assume that there is  Lipschitz continuous,  piecewise smooth (and linear on both sides of each point where it is not smooth), non-increasing $\varphi:\bbR\to [0,\theta]$ and $R'>0$ such that 
\begin{enumerate}
   \item $\varphi=\theta$ on $(-\infty,0]$ and $\varphi=0$ on $[R',\infty)$;
    
    \smallskip
    
    \item $-(-\partial_{xx})^s\varphi> 0$ on the set $\{x\in\bbR\,|\, \varphi(x)\leq \theta_0'\}$, where $\theta_0':= \frac{3\theta_0+\theta}{4} \, (\in(\theta_0,\theta))$;
    
     \smallskip
    
    \item $C:=\sup_{x\in\bbR} \,(-\partial_{xx})^s\varphi(x) <\infty$.
\end{enumerate}
Here $-(-\partial_{xx})^s\varphi$ is allowed to be $\infty$ at the (finitely many) points where $\varphi$ is not smooth  (when $s\geq\frac{1}{2}$).
If $R_\theta:=rR'$ and $u_\theta(x):=\varphi(\frac xr)$ for some $r>0$, then for any $x$ such that $u_\theta(x)\leq \theta_0'$ we have
\[
-(-\partial_{xx})^s u_\theta(x)+f(u_\theta(x))\geq -(-\partial_{xx})^s u_\theta(x)= - r^{-2s} (-\partial_{xx})^s \varphi(xr^{-1})> 0.
\]
If we let $\delta:=\inf_{u\in [\theta_0',\theta]} f(u)>0$ and $r:=(2C/\delta)^{\frac1{2s}}$ (with $C$ from (3)), then for any $x$ such that $u_\theta(x)\geq \theta_0'$ we have
\[
-(-\partial_{xx})^s u_\theta(x)+f(u_\theta(x))\geq -Cr^{-2s}+\delta>0.
\]
Continuity of the left-hand side in $x$ (as a function with values in $\bbR\cup\{\infty\}$) now yields \eqref{2.18'}, and \eqref{2.20} is obvious. Finally a mollification of $u_\theta$ provides the desired smooth function thanks to the sharp inequality in \eqref{2.18'}.

So the it remains to construct $\varphi$. 
Consider a smooth non-decreasing $\psi:\bbR \to \bbR$ such that 
\beq\lb{7.12'}
\psi(y)=y \, \text{ on }\left(-\infty,\frac{\theta+\theta_0'}{2}\right] \qquad\text{and}\qquad \psi(y)=\theta \, \text{ on } \left[ \theta ,\infty \right)
\eeq
(it will play the same role as $\phi_\theta$ in Section \ref{S5}, preventing concave corners on the graph of $\varphi$).  Let $N\geq 1$ be the smallest integer such that $\theta-\theta_0'\geq 2^{-N}  \theta$, and let us first assume that $N=1$. Set
\[
 l_0(x):=\theta-(\theta-\theta_0')x,  \qquad  k_1:=\frac{\theta-\theta_0'}{2}, \qquad b_1:=\frac{\theta+\theta_0'}{2},  \qquad l_1(x):=b_1-k_1 x,
\]
and define $\varphi_1:\bbR\to [0,1]$ via
\[
\varphi_1(x):=\left\{
\begin{aligned}
&\max\{\psi(l_0(x)), l_1(x), 0\} &&\quad \text{ for }x\geq 0,\\
&\theta &&\quad  \text{ for }x\leq 0.
\end{aligned}\right.
\]
Then $\varphi_1$ is clearly Lipschitz continuous and non-increasing, and from $l_1< \psi\circ l_0$ on $(-1,1)$ (note that $l_1< \theta=\psi\circ l_0$ on $(-1,0]$, while $l_1< \min\{l_0, \frac{\theta+\theta_0'}{2}\}\le \psi\circ l_0$ on $(0,1)$) we have
\[
\varphi_1=\psi\circ l_0 \quad \text{on $\left [-1, 1 \right]$}, \qquad \varphi_1(1)=\theta_0',  \qquad   \varphi_1=l_1 \quad \text{on $\left[1, \frac{b_1}{k_1} \right]$}, \qquad \varphi_1 \left(\frac{b_1}{k_1} \right)=0.
\]
Since $\varphi_1$ is convex on $[\frac 12,\infty)$, and $\psi$ is smooth and satisfies \eqref{7.12'}, we have $\sup_{x\in\bbR} (\varphi_1)_{xx}(x)>-\infty$.  Hence a computation similar to \eqref{8.1} proves (3) for $\varphi_1$.

From $N=1$ we see that $\theta_0'\leq \frac \theta 2$, and so $l_1(1)\le \frac\theta 2$ and $\frac{b_1}{k_1}\le 3$.
Hence for any $x\in [1, \frac{b_1}{k_1}]$ we have   $2x+1\ge \frac{b_1}{k_1}$ and $l_1(x)\le \frac\theta 2$, which together with
$\varphi_1> l_1$ on $(-1,1)$ and $l_1\ge\theta$ on $(-\infty,-1]$ yields
\begin{align*}
-(-\partial_{xx})^s\varphi_1(x)&=c_s \int_{0}^\infty \frac{\varphi_1(x+h)+\varphi_1(x-h)-2\varphi_1(x)}{h^{1+2s}}dh\\
&>    c_s \int_{0}^{x+1}\frac{l_1(x+h)+l_1(x-h)-2l_1(x)}{h^{1+2s}}dh+ c_s \int_{x+1}^{\infty}\frac{\theta-2l_1(x)}{h^{1+2s}}dh 
\geq 0.
\end{align*}
For $x\geq \frac{b_1}{k_1}$, we obviously  have $-(-\partial_{xx})^s\varphi_1(x)>0$ because $\varphi_1(x)=0\le\varphi_1 $. Therefore $-(-\partial_{xx})^s\varphi_1>0$ on $[1,\infty)$, hence (1)--(3) follows with $\varphi:=\varphi_1$ and $R':=\frac{b_1}{k_1}$.


Next assume that $N\ge 2$, and let $\psi,k_1,b_1,l_0,l_1$ be as above.  Since now $\theta_0'-(2\theta_0'-\theta)\le \frac{\theta-(2\theta_0'-\theta)}2$ (in fact, equality holds here), the above argument applies to the function
\[
\tilde \varphi_1(x):=\left\{
\begin{aligned}
& \max\{\psi(l_0(x)), l_1(x), 2\theta_0'-\theta\} &&\quad \text{ for }x\geq 0,\\
&\theta &&\quad  \text{ for }x\leq 0,
\end{aligned}\right.
\]
which is equal to $\varphi_1$ above  on $(-\infty,3]$ and to $2\theta_0'-\theta$ on $[3,\infty)$ (because $l_1(3)=2\theta_0'-\theta>0$). Hence $-(-\partial_{xx})^s\tilde{\varphi}_1>0$ on $[1,\infty)$.
We will now change $\varphi_1$ to $l_2(x):=b_2-k_2x$ on  $[x_2,\frac{b_2}{k_2}]$, where
\[
x_2:=3,
 \qquad b_2:=k_2 x_2+2\theta_0'-\theta,
\]
and $k_2\in(0,k_1)$ is to be determined (notice that $l_2(x_2)=2\theta_0'-\theta = l_1(x_2)$, and hence $k_2<k_1$ shows that $b_2=l_2(0)<l_1(0)<\theta$).  So we let
\[
\varphi_2(x):=\left\{
\begin{aligned}
& \max\{\psi(l_0(x)), l_1(x), l_2(x), 0\} &&\quad \text{ for }x\geq 0,\\
&\theta &&\quad  \text{ for }x\leq 0.
\end{aligned}\right.
\]
Since $\varphi_2 \to\tilde{\varphi}_1 $ locally uniformly on $\bbR$ as $k_2\to 0$, there is $k_2\in(0,k_1)$ such that $-(-\partial_{xx})^s{\varphi_2}> 0$ on $[1,x_2]$.  Fix one such $k_2$ and the corresponding $\varphi_2$ (which again satisfies (3) as above).

If now $N=2$,  consider any $x\in  [x_2,\frac{b_2}{k_2}]$.  
From $l_2(\frac{b_2-\theta}{k_2})=\theta$ 
and $l_2(x_2)=2\theta_0'-\theta\le \frac\theta 2$ we see that $l_2<\varphi_2$ on $(\frac{b_2-\theta}{k_2},x_2)$, and  $\frac{b_2}{k_2}\le 2x_2+\frac{\theta-b_2}{k_2}$.  Hence for any $x\in [x_2, \frac{b_2}{k_2}]$ we have  $2x+\frac{\theta-b_2}{k_2}\ge \frac{b_2}{k_2}$ and $l_2(x)\le \frac\theta 2$, and so
\begin{align*}
-(-\partial_{xx})^s\varphi_2(x)&=c_s \int_{0}^\infty\frac{\varphi_2(x+h)+\varphi_2(x-h)-2\varphi_2(x)}{h^{1+2s}}dh\\
&>    c_s \int_{0}^{x+\frac{\theta-b_2}{k_2}}\frac{l_2(x+h)+l_2(x-h)-2l_2(x)}{h^{1+2s}}dh+ c_s \int_{x+\frac{\theta-b_2}{k_2}}^{\infty}\frac{\theta-2l_2(x)}{h^{1+2s}}dh
\geq 0.
\end{align*}
(Note that this is the same argument as for $N=1$, but with $-1$ and 1 replaced by $\frac{b_2-\theta}{k_2}$ and $x_2$.)
For $x\geq \frac{b_2}{k_2}$, we again have $-(-\partial_{xx})^s\varphi_2(x)>0$ because $\varphi_2(x)=0\le\varphi_2$. Therefore $-(-\partial_{xx})^s\varphi_2>0$ on $[1,\infty)$, which yields (1)--(3) with $\varphi:=\varphi_2$ and $R':=\frac{b_2}{k_2}$.

If $N\ge 3$, the above argument  instead applies to 
\[
\tilde \varphi_2(x):=\left\{
\begin{aligned}
& \max\{\psi(l_0(x)), l_1(x), l_2(x), 4\theta_0'-3\theta\} &&\quad \text{ for }x\geq 0,\\
&\theta &&\quad  \text{ for }x\leq 0,
\end{aligned}\right.
\]
which is equal to $\varphi_2$ on $(-\infty,2x_2+\frac{\theta-b_2}{k_2}]$ and to $4\theta_0'-3\theta$ on $[2x_2+\frac{\theta-b_2}{k_2},\infty)$ (because now $l_2(2x_2+\frac{\theta-b_2}{k_2})=2(2\theta_0'-\theta)-\theta=4\theta_0'-3\theta>0$). Hence again $-(-\partial_{xx})^s\tilde{\varphi}_2>0$ on $[1,\infty)$.  Similarly to the case $N\ge 2$, we let 
\[
x_3:=2x_2+\frac{\theta-b_2}{k_2},\qquad b_3:=k_3 x_3+4\theta_0'-3\theta, \qquad l_3(x):=b_3-k_3x,
\]
with $k_3\in(0,k_2)$ small enough so that
\[
\varphi_3(x):=\left\{
\begin{aligned}
& \max\{\psi(l_0(x)), l_1(x), l_2(x), l_3(x), 0\} &&\quad \text{ for }x\geq 0,\\
&\theta &&\quad  \text{ for }x\leq 0.
\end{aligned}\right.
\]
satisfies $-(-\partial_{xx})^s{\varphi_3}> 0$ on $[1,x_3]$.  If $N=3$, we can use $l_3(x_3)=4\theta_0'-3\theta\le \frac\theta 2$ and $k_3<k_2$ to again show as above that (1)--(3) hold with $\varphi:=\varphi_3$ and $R':=\frac{b_3}{k_3}$.

If $N\ge 4$, this argument can be repeated finitely many times until we obtain a function $\varphi_N$ and $b_N,k_N>0$ such that (1)--(3) hold with $\varphi:=\varphi_N$ and $R':=\frac{b_N}{k_N}$.
\medskip

Finally, let us prove the last claim.  Without loss, we can assume that $x_0=0$; the comparison principle (Theorem \ref{T.2.2}) then shows that it suffices to consider $u(0,\cdot)=\bar u_\theta$.  
We now have $ u(t,\cdot)\geq \bar u_\theta= u(0,\cdot)$ for all $t\geq 0$ by \eqref{2.18} and the comparison principle, 
so applying the comparison principle to $u$ and its time shifts now shows that $u$ is non-decreasing in time. If we let  $v(x):=\lim_{t\to \infty} u(t,x)\le 1$,  Theorem \ref{T.2.1} implies that $v\in C^{2s+\sigma}(\bbR^d)$ for some $\sigma>0$, and $-(-\Delta)^sv+f(v)= 0$ holds in the classical sense.  Since $v\ge \bar u_\theta$, \eqref{2.18'x} shows that $v>\bar u_\theta$ on $B_{R_\theta}(0)$.  But then $u(\tau,\cdot)\ge \sup_{|y|\le r} \bar u_\theta (\cdot-y)$ for some $\tau,r>0$.  By iterating this argument we obtain $u(n\tau,\cdot)\ge \sup_{|y|\le nr} \bar u_\theta (\cdot-y)$ for all $n\in\bbN$, so $v\ge\theta$.  Since $f>0$ on $[\theta,1)$, it is easy to show that the only stationary classical solution to \eqref{1.0} taking values in $[\theta,1]$ is $v\equiv 1$ (note that Theorem \ref{T.2.1} shows that all such solutions are uniformly bounded in $C^{2s+\sigma}(\bbR^d)$), and the claim follows.

\section{Proof of Lemma \ref{L.8.1}}

Let us fix any $x\in\bbR^d$ such that $|x|\geq X(\tau_0\theta_1)$, with $\tau_0\in(0,\frac 14]$ to be determined. Then
\begin{align*}
c_{s,d}^{-1} (-\Delta)^s\varphi(x)&\leq  \int_{|h|\leq |x|-X(\theta_1)} \frac{\varphi(x)-\varphi(x+h)}{|h|^{d+2s}}dh+ \int_{|x|-X(\theta_1)\le |h|\leq |x|} \frac{{\varphi}(x)-\tilde{\varphi}(x+h)}{|h|^{d+2s}}dh\\
& + \int_{ |h|\le |x|  \,\&\, |x+h|\leq X(\theta_1)}\frac{\varphi(x)-2^{-1}\theta_1}{|h|^{d+2s}}dh+ \int_{|h|\geq |x|}\frac{\varphi(x)}{|h|^{d+2s}}dh=: I_1+I_2+I_3+I_4,
\end{align*}
where $I_1$ is a principal value integral and
\[
\tilde{\varphi}(\cdot):=
\varphi(\cdot)-2^{-1}\theta_1\chi_{B_{X(\theta_1)}(0)}(\cdot) \ge 2^{-1}\theta_1\chi_{B_{X(\theta_1)}(0)}(\cdot).
\]
Since $\varphi(x)\leq \tau_0\theta_1\leq \frac{1}{4}\theta_1$ and $|x|\geq X(\theta_1)$, there is $\mu_d>0$ (only depending on $d$) such that
\begin{align*}
I_3        \leq -\int_{|h|\le |x|  \,\&\, |x+h|\leq X(\theta_1)} \frac{\theta_1}{4|h|^{d+2s}}dh 
\leq -\mu_d X(\theta_1)^d  \theta_1 |x|^{-d-2s}.
\end{align*}
We now let $c_*:=c_{s,d}\mu_d\theta_1$, which means that it remains to show that
\[
I_1+I_2+I_4\le c_{s,d}^{-1} C_* |x|^{-2s} \varphi(x),
\] 
with $C_*$ to be determined.

If now $g(l):=(al^\beta-b)^{-\frac{1}{\nu}}$  for $l> (a^{-1}b)^{\frac{1}{\beta}}$ (then $g(|y|):=\varphi(y)$ for $|y|\geq X(\theta_1)>(a^{-1}b)^{\frac{1}{\beta}}$), then using  $g(l)^{-\nu}\geq al^\beta $ yields 
\begin{align*}
g''(l)+\frac{d-1}{l}g'(l)    &=\nu^{-2}(1+\nu)g(l)^{1+2\nu}a^2\beta^2 l^{2\beta-2}-{\nu^{-1}}g(l)^{1+\nu}a\beta(\beta+d-2) l^{\beta-2} \\
    &\geq \nu^{-2}g(l)^{1+2\nu}a^2\beta l^{2\beta-2}(\beta-\nu(d-2)).
\end{align*}
This is $\geq 0$ due to $\frac{\beta}{\nu}\geq d-2$,
so $\varphi$ is subharmonic on $\left(B_{X(\theta_1)} (0)\right)^c\supseteq B_{|x|-X(\theta_1)}(x)$. Hence for any $r\in(0,|x|-X(\theta_1))$ we have  $\dashint_{\partial B_r(x)}\varphi(y)d\sigma(y) \geq\varphi(x)$, and so $I_1\leq 0$.  We also have
\begin{align*}
    I_4= \varphi(x)\int_{|h|\geq |x|} |h|^{d+2s}dh\leq \mu_d' |x|^{-2s} \varphi(x)
\end{align*}
for some $\mu_d'>0$ only depending on $d$.  It therefore remains to estimate $I_2$.

When $d=1$, we  get $I_2\le 0$ because ${\varphi}(x)-\tilde{\varphi}(x+h)$ is no more than $\frac{\theta_1} 4- \frac{\theta_1} 2=-\frac{\theta_1} 4$ for $h\in[0,X(\theta_1)]$ and no more than $\frac{\theta_1} 4$ for $h\in [2x-X(\theta_1), 2x]$ (this is when $x>0$; when $x<0$, these two intervals must be reflected across 0).  This finishes the proof when $d=1$.

We will need to work a little harder when $d\ge 2$.
Let  $\eps_1:=|x|^{-1}X(\theta_1)$ (which is $<1$ because $|x|\geq X(\tau_0\theta_1)$)
and $\tau:=\frac{\varphi(x)}{\theta_1} \le\tau_0$. 
Let us first consider the case when $\eps_1>\frac23$, so that $\rho_1:=1-\eps_1 < \frac13$ (and then $|x|-X(\theta_1)=\rho_1|x|$). 
There is $c_d>0$ such that for all $r\in[ 2\rho_1|x|, |x|]$ we have, 
\[
\calH^{d-1}\left( \left\{h\,\big|\,|h|=r \,\&\, |h+x|\leq X(\theta_1) \right\} \right)\geq c_d\calH^{d-1} \left( \left\{ h\,\big|\, |h|=r  \right\}\right),
\]
with $\calH^{d-1}$ the $(d-1)$-dimensional measure.
If $\tau_0\leq \frac{c_d}4$, then this, $\tilde{\varphi}\geq \frac{\theta_1}2$ on $B_{X(\theta_1)}(0)$, and $\varphi(x)\le\tau_0\theta_1\le\frac{\theta_1}4$ yield (with $\omega_d$ the surface area of $\bbS^{d-1}$)
\begin{align*}
\int_{|h|=r} [\varphi(x)-\tilde{\varphi}(x+h)]d\sigma(h) & \leq    \int_{|h|=r} \tau_0\theta_1 d\sigma(h)-    \int_{|h|=r\& |h+x|\leq X(\theta_1)} 2^{-1}\theta_1d\sigma(h)\le 0 \\
&\leq  -4^{-1}c_d\omega_d \theta_1 r^{d-1}. 
\end{align*}
From this we obtain (with $\omega_d$ the surface area of $\bbS^{d-1}$)
\begin{align*}
I_2&\leq \int_{\rho_1|x|\leq|h|\leq 3\rho_1|x|}  \frac{{\varphi}(x)-\tilde{\varphi}(x+h)}{|h|^{d+2s}}  dh\\
&\leq -    \int_{2\rho_1|x|}^{3\rho_1|x|} \frac{c_d\omega_d \theta_1r^{d-1}}{4r^{d+2s}}dr+    \int_{\rho_1|x|\leq |h|\leq 2\rho_1|x|} \frac{\varphi(x)}{|h|^{d+2s}}dh\\
&\leq -  \frac{c_{d}\omega_d\theta_1}{8s}(2^{-2s}-3^{-2s})(\rho_1|x|)^{-2s}+\frac{\tau_0\omega_d\theta_1}{2s}(1-2^{-2s})(\rho_1|x|)^{-2s}
\end{align*}
which is $\leq 0$ provided $\tau_0\leq \frac{c_d(4^{-s}-9^{-s})}{4(1-4^{-s})}$.

Finally, we are left with the case $\eps_1\leq \frac23$ (and so $\rho_1\geq \frac13$).
Let
\[
A_x:= \left\{ h\,\big|\, |x|-X_t(\theta_1)\le  |h|\leq |x| \,\&\,|x+h|\geq X(\theta_1) \right\},
\]
and let
\[
e:=|x|^{-1}x \qquad\text{and}\qquad  \eps_2:= |x|^{-1} (a^{-1}b)^{\frac{1}{\beta}}.
\]
Then $\eps_1^\beta-\eps_2^\beta= |x|^{-\beta} a^{-1}\theta_1^{-\nu}\geq 0$.
By again using that $\tilde{\varphi}\ge \frac {\theta_1}2\geq \varphi(x)$ on $B_{ X(\theta_1)}(0)$ and then changing variables via $h=|x|z$, we obtain
\begin{align*}
I_2&\leq\int_{A_x} \frac{\varphi(x)-\varphi(x+h)}{|h|^{d+2s}}dh\\
&={|x|^{-2s}} \int_{\rho_1\leq |z|\leq 1\,\&\, |e+z|\leq \eps_1} \frac{(a|x|^{\beta}-b)^{-\frac{1}{\nu}}-(a|e+z|^\beta |x|^{\beta} -b)^{-\frac{1}{\nu}}}{|z|^{d+2s}}dz\\
&=|x|^{-2s}  \varphi(x) \int_{\rho_1\leq |z|\leq 1\,\&\, |e+z|\leq \eps_1} \frac{(|e+z|^\beta-\eps_2^\beta)^{\frac{1}{\nu}}-(1-\eps_2^\beta)^{\frac{1}{\nu}}}{(|e+z|^\beta-\eps_2^\beta)^{\frac{1}{\nu}}|z|^{d+2s}}dz.
\end{align*}
So it remains to show that
\[
I_2':=\int_{\rho_1\leq |z|\leq 1\,\&\, |e+z|\leq \eps_1} \frac{(|e+z|^\beta-\eps_2^\beta)^{\frac{1}{\nu}}-(1-\eps_2^\beta)^{\frac{1}{\nu}}}{(|e+z|^\beta-\eps_2^\beta)^{\frac{1}{\nu}}|z|^{d+2s}}dz
\]
is uniformly bounded  above for $\rho_1\in [\frac13,1]$, $0\le \eps_2\leq \eps_1\leq\frac23$, and $e\in\bbS^{d-1}$, by a constant depending on $s,\beta,\nu,d$. 
But when $|e+z|\geq 1$,  the integrand is clearly bounded above by such a constant; and when $|e+z|< 1$, then it is negative.  This therefore concludes the proof.



\end{document}